\documentclass[10pt]{amsart}
\usepackage{amssymb, amsthm, amsmath}
\usepackage{graphicx}
\usepackage{latexsym}

\newtheorem{thm}{Theorem}

\theoremstyle{definition}

\newtheorem{example}{Example}
\newtheorem{remark}{Remark}

\newcommand\C{{\mathbb C}}
\newcommand\Q{{\mathbb Q}}
\newcommand\N{{\mathbb N}}
\newcommand\F{{\mathrm F}}
\newcommand\V{{\mathcal V}}
\newcommand\IH{{\mathbb H}}
\newcommand{\ti}{\vartheta}
\newcommand{\Ti}{\Theta}
\newcommand\Eta{H}

\newcommand\bP{{\mathbb P}}
\newcommand\cP{{\mathcal P}}
\newcommand\cO{{\mathcal O}}
\newcommand\cC{{\mathcal C}}
\newcommand\cW{{\mathcal W}}

\newcommand\X{{\mathfrak X}}
\newcommand\Z{{\mathbb Z}}

\newcommand\AS{{\mathfrak S}}
\newcommand\BS{{\mathfrak B}}
\newcommand\CS{{\mathfrak C}}
\newcommand\DS{{\mathfrak D}}
\newcommand\XX{{\mathrm X}}
\newcommand\YY{{\mathrm Y}}
\newcommand\I{{\mathrm I}}
\newcommand\J{{\mathrm J}}

\newcommand\fraka{{\mathfrak a}}
\newcommand\frakb{{\mathfrak b}}

\newcommand\al{\alpha}

\newcommand\la{\lambda}

\newcommand\x{{\mathrm{x}}}
\newcommand\y{{\mathrm{y}}}
\newcommand\om{{\varpi}}
\newcommand\ta{{\tau}}
\newcommand\Sh{{\mathcal S}}

\newcommand\ssm{\smallsetminus}

\newcommand\noin{\noindent}

\newcommand\bull{{\scriptscriptstyle \bullet}}
\newcommand\eqto{\stackrel{\lower1.5pt\hbox{$\scriptstyle\sim\,$}}\to}
\newcommand\ov{\overline}
\newcommand\hra{\hookrightarrow}

\newcommand\wh{\widehat}
\newcommand\wt{\widetilde}
\newcommand\dis{\displaystyle}

\DeclareMathOperator{\Pic}{Pic} 
 \DeclareMathOperator{\Pf}{Pfaffian}
\DeclareMathOperator{\Sp}{Sp} \DeclareMathOperator{\SO}{SO}
 \DeclareMathOperator{\GL}{GL}
\DeclareMathOperator{\LG}{LG} \DeclareMathOperator{\IG}{IG}
 \DeclareMathOperator{\OG}{OG}
\DeclareMathOperator{\G}{G} 
\DeclareMathOperator{\QH}{QH}
\DeclareMathOperator{\U}{U}
\DeclareMathOperator{\HH}{\mathrm{H}} 
 
\DeclareMathOperator{\type}{\mathrm{type}}
\DeclareMathOperator{\rank}{\mathrm{rank}}

\newcommand{\ignore}[1]{}
\newcommand{\pic}[2]{\includegraphics[scale=#1]{#2}}

\begin{document}

\title[Giambelli and degeneracy locus formulas for classical 
$G/P$ spaces]
{Giambelli and degeneracy locus formulas for classical $G/P$ spaces}

\date{January 11, 2016}

\author{Harry~Tamvakis} \address{University of Maryland, Department of
Mathematics, 1301 Mathematics Building, College Park, MD 20742, USA}
\email{harryt@math.umd.edu}

\subjclass[2010]{Primary 14M15; Secondary 05E15, 14M17, 14N15, 05E05}

\keywords{Schubert calculus, Giambelli formulas, 
Schubert polynomials, degeneracy loci, equivariant cohomology}

\thanks{The author was supported in part by NSF Grants DMS-0901341
and DMS-1303352.}

\begin{abstract}
Let $G$ be a classical complex Lie group, $P$ any parabolic subgroup
of $G$, and $X=G/P$ the corresponding homogeneous space, which
parametrizes (isotropic) partial flags of subspaces of a fixed vector
space. In the mid 1990s, Fulton, Pragacz, and Ratajski \cite{Fu3, PR2,
  FP} asked for global formulas which express the cohomology classes
of the universal Schubert varieties in flag bundles -- when the space
$X$ varies in an algebraic family -- in terms of the Chern classes of
the vector bundles involved in their definition. This has applications
to the theory of degeneracy loci of vector bundles and is closely
related to the Giambelli problem for the torus-equivariant cohomology
ring of $X$. In this article, we explain the answer to these questions
which was obtained in \cite{T6}, in terms of combinatorial data coming
from the Weyl group.
\end{abstract}

\maketitle

\setcounter{section}{-1}

\section{Introduction}
\label{introsec}

The theory of degeneracy loci of vector bundles has its roots in the
19th century, motivated by questions in elimination theory and
enumerative algebraic geometry. The modern subject began with the work
of Thom and Porteous in topology, which was generalized and extended
to the algebraic setting by Kempf, Laksov, and Lascoux \cite{KL, La0,
  Fu5}. The simplest example involves two complex vector bundles $E,F$
on a smooth algebraic variety $M$. Given a generic map of vector
bundles $f:E\to F$ and $r$ any integer, the locus $M_r$ of points
$m\in M$ where $\rank(f_m)\leq r$ is called a {\em degeneracy
  locus}. Thom \cite{Th} showed that the homology class of $M_r$ must
be Poincar\'e dual to a universal polynomial in the Chern classes of
the vector bundles $E$ and $F$, and Porteous \cite{Po} later found
this representing polynomial. Such degeneracy loci arise frequently in
problems of algebraic geometry and singularity theory, therefore
explicit Chern class formulas for these loci can be quite useful. We
refer to \cite{Tu, P3, FP, FR2, Ka} for surveys, and to \cite{Bertram,
  DP, EvG, FR, Fu3, FL, HT, JLP, Ka1, KTlg, PP, P1, PR2, Sa, SdS, T1,
  T2} for an incomplete list of applications.

In a series of papers in the 1990s, Fulton \cite{Fu1, Fu2, Fu3}
generalized the work of Kempf-Laksov further, to a map of flagged
vector bundles, and studied an analogue of the same problem for the
other classical Lie groups. This involved degeneracy loci given by
incidence relations between a pair of isotropic flags of subbundles of
a fixed vector bundle, which is equipped with a symplectic or
orthogonal form. In all cases, the {\em Schubert polynomials}
representing the cohomology classes of the loci were defined by an
algorithm using divided difference operators (stemming from \cite{BGG,
  D1, D2, LS1}) applied to a `top polynomial' which represented the
class of the diagonal (the locus of points on the base variety where
the two flags coincide). Related computations were performed at much
the same time by Pragacz and Ratajski \cite{PR2}, and other competing
theories of Schubert polynomials in the Lie types B, C, and D were
discovered \cite{BH, FK2, LP1}. In \cite{Fu3} and \cite[\S 9.5]{FP},
Fulton and Pragacz asked for combinatorially explicit, global formulas
for the cohomology classes of degeneracy loci, which have a similar
shape for all the classical groups, and are determinantal whenever
possible. The aim of this article is to describe the answer to this
question which was obtained in \cite{T6}, building on a series of
earlier works, in terms of data coming from the Weyl group.

Graham \cite{Gra} recast the above degeneracy locus problem using the
language of Lie theory, and studied the universal case when the
structure group $G$ of the fibre bundles involved is any complex
reductive group (see also \cite[\S 6.6]{Br}). He observed that the
degeneracy locus question of \cite{Fu3} is essentially equivalent to
the problem of obtaining a formula for the {\em equivariant Schubert
  classes} in the torus-equivariant cohomology ring of the flag
variety $G/B$ (when $G$ is a classical group, there is also the
twisted case, when the bilinear form takes values in a line
bundle). Indeed, from the point of view of a Lie theorist, there seems
to be no reason to exclude the exceptional groups from the degeneracy
locus story. We will suggest two reasons below why the classical
groups appear to be special for this question.

In type A, the double Schubert polynomials of Lascoux and
Sch\"utzenberger \cite{La1, LS1} were characterized as the unique
polynomials that satisfy the general degeneracy locus formula of
\cite{Fu1}. Fomin and Kirillov \cite{FK2} observed that this strong
uniqueness property breaks down in type B, where in fact there is a
plethora of theories of (single) Schubert polynomials. However, the
Schubert polynomials of Billey and Haiman \cite{BH} impressed us as
the most combinatorially explicit theory among those available in the
other classical Lie types. These polynomials enjoyed most of the
properties of the type A single Schubert polynomials, but their
translation (as given in \cite{BH}) into Chern class formulas in
$\HH^*(G/B)$ involved a change of variables and an ensuing loss of
combinatorial control. This problem was first addressed by the author
\cite{T2, T3}, using a more natural and geometric substitution of the
variables, with applications to arithmetic intersection theory. Ikeda,
Mihalcea, and Naruse \cite{IMN} later introduced double versions of
the Billey-Haiman Schubert polynomials and extended the substitution
of \cite{T2, T3} to this setting -- expressing it in a better way, as
a ring homomorphism (the {\em geometrization maps} of \S
\ref{ddgeom}).  With this work, the search for a satisfactory analogue
of the Lascoux-Sch\"utzenberger theory in the other classical Lie
types was finally over.

Although the decision of which theory of Schubert polynomials to use
is clearly important, by construction they only provide formulas in
terms of the {\em Chern roots} of the vector bundles involved. When
the initial degeneracy locus problem carries the symmetries of a
parabolic subgroup $P$ of $G$, we seek an answer which manifestly
exhibits the same symmetries. This should generalize the Jacobi-Trudi
determinants and Schur Pfaffians that appear when the Schubert
polynomials are evaluated on (maximal) Grassmannian elements of the
Weyl group, as in \cite{KL, Fu3, PR2, FP, KT}. In other words, we
desire formulas that are {\em native to $G/P$}, i.e., expressed in
terms of Schubert classes that live in the cohomology ring of the
homogeneous space $G/P$. It turned out that a precise understanding of
the {\em Giambelli problem} for $\HH^*(G/P)$, which is closely related
to the degeneracy locus formulas above, was necessary for further
progress.

The cohomology of $X=G/P$ is a free abelian group on the basis of
Schubert classes, the cohomology classes of the Schubert varieties.
When $G$ is a classical Lie group, there are certain {\em special
  Schubert classes} among these, which generate the ring
$\HH^*(X)$. This is one place where the fact that $G$ is classical is
important: at present, we do not know how to define special classes
for the exceptional groups. For classical $G$, one has a good
definition of special Schubert varieties, which is uniform across the
four types. In this case, the variety $X$ parametrizes partial flags
of subspaces of a vector space, which in types B, C, and D are
required to be isotropic with respect to an orthogonal or symplectic
form. If $X$ is an (isotropic) Grassmannian, then the special Schubert
varieties are defined as the locus of (isotropic) linear subspaces
which meet a given (isotropic or coisotropic) linear subspace
nontrivially, following \cite{Pi, BKT1}. The special Schubert
varieties on any partial flag variety $X$ are the inverse images of
the special Schubert varieties on the Grassmannians to which $X$
projects.  The special Schubert classes are the cohomology classes of
the special Schubert varieties; in most examples, they are equal to
the Chern classes of the universal {\em quotient} bundles over $X$, up
to a factor of two.

The Giambelli problem for $\HH^*(X)$ challenges us to write a
general Schubert class as an explicit polynomial in the above special
classes. The papers \cite{G1, BKT2, BKT4} addressed this question for
all (isotropic) Grassmannians, and \cite{BKTY1, T6} extended the
answer to any classical $G/P$ space. To do this, we had to go beyond
the known hermitian symmetric, fully commutative examples, and invent
a considerable body of new combinatorics. The Schubert classes are
indexed by (typed) {\em $k$-strict partitions}, the Giambelli formulas
are expressed using Young's {\em raising operators} \cite{Y, Lit} and
studied using a new calculus of these operators \cite{BKT2, T4}, the
Schur polynomials are extended to {\em theta} and {\em eta
  polynomials}, and instead of Young tableaux, we count paths in {\em
  $k$-transition trees}. Ultimately, all of these objects can be
understood purely in terms of the combinatorics of the Weyl group of
(signed) permutations.

The degeneracy locus problem is equivalent to the Giambelli problem
when the space $X$ varies in an algebraic family, and thus would
appear to be more difficult. Indeed, in most cases where determinantal
formulas for the double Schubert polynomials representing the loci
were known, these formulas were significantly more complicated than
their single versions -- which address the Giambelli problem in that
case.  The type A paper \cite{BKTY1} changed that paradigm: it
established the surprising fact that if one uses the language of {\em
  quiver polynomials}, then the answer to the degeneracy locus problem
has the same shape as that for the Giambelli problem, and indeed, a
near identical proof! This picture was generalized to all classical
types in \cite{T6}, in a synthesis which used all of the above
ingredients, and added some new ones. The results were combinatorial
{\em splitting formulas} for the Schubert polynomials of \cite{BH,
 IMN}, and direct translations of these into degeneracy locus
formulas, with the symmetries native to the appropriate $G/P$
space. 

The goal of this paper is to explain the above story. The narrative
combines elements from algebraic geometry, Lie theory, and
combinatorics, and we have strived to keep the exposition as
self-contained as possible. We include one original contribution: a
new proof of the main result of \cite{IMN}, which states that the
double Schubert polynomials in types B, C, and D represent the
Schubert classes.  The setup in \cite{IMN} uses localization in
equivariant cohomology, which we do not require here. The key idea --
exploited in \cite{T4, T6, T7} -- is to use the elegant approach to
Schubert polynomials via the nilCoxeter algebra and the Yang-Baxter
equation, pioneered in \cite{FS, FK1, FK2}. One of the advantages of
this approach is that the Schubert polynomials are defined simply and
directly in terms of reduced decompositions in the Weyl group, without
requiring the use of a `top polynomial'. From this point of view, one
can also understand why the {\em stability} property of Schubert
polynomials is needed: it is only in the stable equivariant cohomology
ring that compatibility with divided differences alone (both left and
right!\ -- an important insight of \cite{IMN}) is enough to
characterize the universal Schubert classes, up to a scalar
factor. Anderson and Fulton \cite{AF} have recently also given a
different proof of the main theorem of \cite{IMN}, within the
framework of degeneracy loci, using a geometric argument which 
employs Kazarian's multi-Schur Pfaffians \cite{Ka1}.

We have made no attempt to write a survey, and in particular
the extensive literature on the Schubert calculus and the equivariant
cohomology of homogeneous spaces is barely touched upon. In special
cases, there are alternatives to the combinatorial formulas shown
here; the reader may consult \cite{AF, Ar, Bi2, BJS, BKT2, BKT4, FP,
  Ik, IN0, Ka1, KT, LaSa, La3, Mi, T4, T7, TW} for examples of what is
known, and the papers \cite{Br, BKT1, BKTY1, IMN, T4, T6} for further
references to related research. Throughout this article, we work with
cohomology groups, at times with rational coefficients. However, from
these, one can deduce results for cohomology with integer
coefficients, and also in the algebraic category, for the Chow groups
of algebraic cycles modulo rational equivalence. The necessary
modifications to achieve this are explained in detail in \cite{Br,
  EGr, Fu3, Gra}.

This article is organized as follows. We begin in \S \ref{class} 
and \S \ref{isogiam} with a discussion of Giambelli formulas for 
Grassmannians, expressing them using the language of raising operators. 
Section \ref{cohgp} contains general facts about the cohomology of 
$G/P$ spaces and the Giambelli problem in this context. The
combinatorial data coming from the Weyl group and the algebraic objects
necessary to state the general degeneracy locus formulas are given
in \S \ref{wgtts} and \S \ref{ssss}, respectively. In particular, 
\S \ref{splitsps} contains splitting formulas for Schubert polynomials,
which admit direct translations in \S \ref{dloc} to Chern class 
formulas for degeneracy loci. Section \ref{pfs} outlines the proofs 
of the main theorems, and \S \ref{fut} contains some questions
for the future.

This project would not have been possible without the contributions of
many authors, a list too long to mention here. I am particularly
grateful for the hard work and support of my collaborators Anders
Buch, Andrew Kresch, and Alexander Yong over a period of many years.
I also thank the anonymous referee for comments on an earlier version
of the paper.

\section{The Giambelli formula of classical Schubert calculus}
\label{class}

The main object of study in classical Schubert calculus is the
Grassmannian $X=\G(m,n)$, which is the set of all $m$-dimensional
complex linear subspaces of $V=\C^n$. Given any subset $H$ of $V$, we
let $\langle H \rangle$ denote the $\C$-linear span of $H$. Let
$e_1,\ldots,e_n$ denote the canonical basis of $\C^n$, and $d=n-m$ be
the codimension of the subspaces in $X$. The general linear group
$\GL_n(\C)$ acts transitively on $X$, and the stabilizer of the point 
$\langle e_1,\ldots, e_m\rangle$ under this action can be identified with
the subgroup $P$ of matrices in $\GL_n(\C)$ of the block form
\[
\left(
\begin{array}{c|c}
* & * \\  \hline
0 & *
\end{array}
\right),
\]
where the $0$ in the lower left corner denotes a $d\times m$
zero matrix.  In this way we get a description of $X$ as a coset space
\[
X=\GL_n(\C)/P
\]
from which one can deduce that $X$ is a complex manifold of dimension
$md$.  The subgroup $P$ is a {\em maximal parabolic subgroup} of
$\GL_n(\C)$.  A similar analysis shows that the manifold $X$ is
isomorphic to $\U(n)/(\U(m)\times \U(d))$, and hence is a compact
manifold.  In fact, $X$ is a projective algebraic variety, and may be
described by a system of quadratic polynomial equations, known as the
Pl\"ucker relations. For further details on this and other aspects of
this section, we refer to \cite{Fu4, Ma}.

In the latter half of the 19th century, Hermann Schubert gave a first
systematic treatment of enumerative projective geometry \cite{Sc1}, in
which the Grassmannian $X$ played a prominent part. The course of his
study led him to introduce certain natural closed algebraic subsets of
$X$, later known as the Schubert varieties \cite{Sc2}. To define them,
set $F_i=\langle e_1,\ldots,e_i\rangle$ for each integer $i\in [1,n]$,
and consider the complete flag of subspaces
\[
F_\bull\ :\ 0 = F_0 \subset F_1\subset\cdots\subset F_n=V.
\]
The stabilizer $B\subset \GL_n(\C)$ of $F_\bull$ is the {\em Borel
subgroup} of upper triangular matrices in $\GL_n(\C)$. 
In modern language, the Schubert varieties are the closures of the 
$B$-orbits in $X$. Each $B$-orbit in $X$ is called a Schubert cell;
there are finitely many such cells, and they induce a cell decomposition 
of the manifold $X$. 

We call a subset $\cP \subset [1,n]$ of cardinality $m$ an {\em index set\/}.
Any point $\Sigma \in X$ defines an index set $\cP(\Sigma)$ by
\[
\cP(\Sigma) := \{ p \in [1,n] \mid \Sigma \cap F_p \supsetneq \Sigma \cap
F_{p-1} \} \,.
\]
Observe that $\cP(\Sigma') = \cP(\Sigma)$ for any point $\Sigma'$ in
the orbit $B.\Sigma \subset X$. On the other hand, given any subspace
$\Sigma \subset V$, one can easily construct a basis $\{g_1,\ldots,
g_n\}$ of $V$ such that $F_i = \langle g_1,\dots,g_i \rangle$ for each
$i$ and $\Sigma = \langle \{g_1,\dots,g_n \} \cap \Sigma \rangle$. It
follows from this that any point $\Sigma' \in X$ such that
$\cP(\Sigma') = \cP(\Sigma)$ must be in the orbit $B.\Sigma$.  In
other words, the $B$-orbits (or Schubert cells) in $X$ correspond 1-1
to the index sets $\cP$.  We let $X^\circ_\cP(F_\bull)$ denote the
Schubert cell given by $\cP$, that is,
\begin{equation}
\label{Ptocell}
X^\circ_\cP(F_\bull) := \{ \Sigma \in X \mid \cP(\Sigma) = \cP \} \,. 
\end{equation}
The definition implies that we have a cell decomposition
\[
\G(m,n)=\coprod_\cP X^\circ_\cP(F_\bull).
\]

Suppose that $\cP=\{p_1<\cdots < p_m\}$ is an index set.  Any subspace
$\Sigma \in X^\circ_\cP(F_\bull)$ is spanned by the rows of a unique
$m\times n$ matrix $A=\{a_{ij}\}$ in a special reduced row echelon
form: there is a pivot entry 1 in position $(i,p_i)$, all other
entries in the $i$th row after the pivot are zero, and all entries
below the pivot entries are zero. If $j < p_i$ and $j \neq p_r$ for
all $r<i$, then $a_{ij}$ is a free variable, and gives an affine
coordinate for the Schubert cell $X^\circ_\cP$.  For example, if $m=4$,
$n=10$, and $\cP=\{3,5,6,9\}$, then
\[
A= \left(
\begin{array}{cccccccccc}
* & * & 1 & 0 & 0 & 0 & 0 & 0 & 0 & 0 \\
\ast & * & 0 & * & 1 & 0 & 0 & 0 & 0 & 0 \\
\ast & * & 0 & * & 0 & 1 & 0 & 0 & 0 & 0 \\
\ast & * & 0 & * & 0 & 0 & * & * & 1 & 0
\end{array}
\right).
\]
From this description we see that the $B$-orbit $X^\circ_\cP$ is
isomorphic to affine space $\C^{|\cP|}$, where $|\cP|$ is the number of
$*$'s in the reduced row echelon form of matrices in the cell, namely
\[
|\cP|=\sum_{j=1}^m(p_j-j).
\]

At this point it is convenient to introduce a different
parametrization for the Schubert cells which makes their
{\em codimension} apparent: let
\[
\la_j := d+j-p_j, \ \ 1\leq j\leq m.
\]
It is clear that there is a 1-1 correspondence between the vectors
$\la=(\la_1,\ldots,\la_m)$ and index sets $\cP$ (for fixed $m$ and
$n$); for example $\cP=\{3,5,6,9\}$ corresponds to $\la=(4, 3, 3, 1)$.
The conditions on the index set $\cP$ imply that
\[
d\geq \la_1\geq \la_2\geq \cdots \geq \la_m\geq 0 ,
\]
equivalently, that $\la=(\la_1,\ldots,\la_m)$ is a {\em partition}
whose first part $\la_1$ is at most $d$ and number of nonzero parts
$\la_j$ is at most $m$. Recall that any partition $\la$ can be
represented by a Young diagram of boxes, arranged in left-justified
rows, with $\la_j$ boxes in the $j$th row. The above conditions state
that the diagram of $\la$ is contained in an $m\times d$ rectangle,
which is the Young diagram of the partition $(d^m)=(d,\ldots,d)$. The
example shown below corresponds to a Schubert cell in $\G(4,10)$
indexed by the partition $\la=(5,4,2)$.
\[
\includegraphics[scale=0.2]{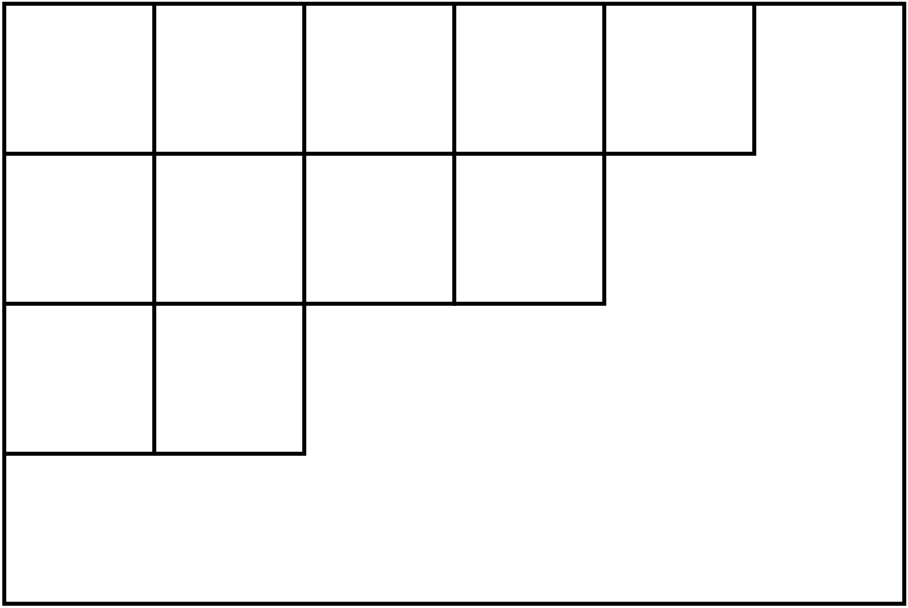} 
\]

We identify a partition with its Young diagram; an inclusion
$\la\subset\mu$ of partitions corresponds to the containment of their
respective diagrams.  The {\em weight} of $\la$, denoted $|\la|$, is
the total number of boxes in $\la$, hence $|\la|=\sum_{j=1}^m \la_j$,
and $\la$ is a partition of the integer $|\la|$. For each $\la$ as
above, we have a Schubert cell $X^\circ_{\la}$, which is equal to
$X^\circ_\cP$ for the index set $\cP$ corresponding to
$\la$. $X^\circ_{\la}$ has (complex) dimension
$$\sum_{j=1}^m(d-\la_j)=md-|\la|=\dim X - |\la|,$$ and therefore
codimension $|\la|$ in $X$.

The Schubert variety $X_\la(F_\bull)$ is the closure of the Schubert
cell $X^\circ_\la(F_\bull)$; it is an algebraic variety also of
codimension $|\la|$ in $X$. We have 
\[
X_{\la}(F_\bull)= \coprod_{\mu\supset\la}X^\circ_\mu(F_\bull) =
\{\Sigma\in X\ |\ \dim(\Sigma\cap F_{d+j-\la_j})\geq j, \
1\leq j \leq m \}.
\]
For each partition $\la$ contained in
$(d^m)$, let $[X_\la]\in \HH^{2|\la|}(X,\Z)$ denote the cohomology class
Poincar\'e dual to the cycle defined by $X_\la(F_\bull)$. If $F'_\bull$
is another complete flag, then there is an element $g$ in $\GL_n(\C)$
such that $g\cdot F_\bull = F'_\bull$. It follows that $[X_\la(F_\bull)]=
[X_\la(F'_\bull)]$, and therefore that the {\em Schubert class} $[X_\la]$ 
only depends on the partition $\la$, and not on the flag $F_\bull$. 
The cell decomposition of $X$ implies that the classes of the Schubert
varieties give a $\Z$-basis for $\HH^*(X,\Z)$. In other words, there is 
a direct sum decomposition
\[
\HH^*(X,\Z)=\bigoplus_{\la\subset (d^m)}\Z\,[X_{\la}].
\]

Of course, the cohomology $\HH^*(X,\Z)$ is also a commutative ring
under the cup product, and dually under the intersection product of
homology cycles. It follows that the structure of this ring is
determined by intersecting Schubert varieties in general position. The
simplest such varieties are the {\em special Schubert varieties}
\[
X_r(F_\bull)= \{\Sigma\in X\ |\ \Sigma\cap F_{d+1-r}\neq 0 \}
\] 
for $1\leq r \leq d$ (here the index $r$ is identified with the
partition $(r,0,\ldots,0)$). Historically, it was natural to focus on
the $X_r$ since these spaces are the easiest to work with
geometrically. The corresponding classes $[X_r]$ are the {\em special
  Schubert classes}. These cohomology classes can be realized as
characteristic classes of certain universal vector bundles over
$\G(m,n)$. Let $E'$ denote the tautological rank $m$ vector bundle
over $X$, $E$ the trivial rank $n$ vector bundle, and $E''=E/E'$ the
rank $d$ quotient bundle, so that we have a short exact sequence
\begin{equation}
\label{ses}
0 \to E' \to E \to E'' \to 0
\end{equation}
of vector bundles over $X$. Then $[X_r]$ is by definition the $r$-th
{\em Segre class} of $E'$, or equivalently, the $r$-th {\em Chern class}
of $E''$, denoted $c_r(E'')$.

The work of Pieri \cite{Pi} and Giambelli \cite{G1} established that
the special classes $c_r = c_r(E'')$ generate the cohomology ring
$\HH^*(X,\Z)$. Giambelli proved the following explicit
formula which writes a general Schubert class $[X_\la]$ as a
polynomial in special classes:
\begin{equation}
\label{giambelli}
[X_\la] = \det(c_{\la_i + j-i}(E''))_{1\leq i,j \leq m}.
\end{equation}
In equation (\ref{giambelli}) and in the remainder of this paper, our
convention is that $c_0=1$ and $c_r=0$ whenever $r<0$. Observe that
there are relations among the $c_r$ in $\HH^*(X,\Z)$, so that the
right hand side of formula (\ref{giambelli}) is not unique. However,
the natural inclusion $\G(m,n)\hra \G(m+1,n+1)$ induces a surjection
$$\HH^*(\G(m+1,n+1),\Z)\to \HH^*(\G(m,n),\Z).$$ For a fixed partition
$\la$ and codimension $d$, the Giambelli polynomial in
(\ref{giambelli}) is the unique one that is preserved under the above
map, for all $m$ greater than or equal to the number of (nonzero)
parts of $\la$.

For our purposes here it will be important to rewrite formula
(\ref{giambelli}) using A.\ Young's raising operators \cite{Y}.
An {\em integer sequence} is a sequence of integers $\al=(\al_1,
\al_2,\ldots)$ only finitely many of which are nonzero.  Given any
integer sequence $\alpha$ and natural numbers $i<j$, we define
\[
R_{ij}(\alpha) := (\alpha_1,\ldots,\alpha_i+1,\ldots,\alpha_j-1,
\ldots).
\] 
A {\em raising operator} $R$ is any monomial in these $R_{ij}$'s.  If
$(c_1,c_2,\ldots)$ is any ordered set of commuting independent
variables, we let $c_{\al} := \prod_{i\geq 1}c_{\al_i}$, with the
understanding that $c_0=1$ and $c_r = 0$ if $r<0$. For any raising
operator $R$, set $R\,c_{\al} := c_{R\al}$ (note that we slightly abuse
the notation here and consider that the raising operator $R$ acts on
the index $\al$, and not on the monomial $c_\al$ itself). Consider the
raising operator expression
\[
R^{0} := \prod_{i<j}(1-R_{ij}).
\]
If we expand the infinite product $R^{0}$ as a formal power series in
the $R_{ij}$ and apply the result to $c_\al$, only finitely many of
the summands are nonzero. Therefore $R^{0}\,c_\al$ is a well defined
polynomial in the variables $c_r$, and in fact we have
\begin{equation}
\label{vand}
R^{0}\,c_\al = \det(c_{\al_i+j-i})_{i,j}.
\end{equation}
Equation (\ref{vand}) is a formal consequence of the Vandermonde 
identity
\[
\prod_{1\leq i<j \leq m} (\x_i-\x_j) =
\det(\x_i^{m-j})_{1\leq i,j \leq m} \ ;
\]
for a proof of this see for example \cite{T5}. It follows that we may
rewrite (\ref{giambelli}) as 
\begin{equation}
\label{giambelli2}
[X_\la] = R^{0}\, c_\la(E''),
\end{equation}
where $c_\la(E'') = \prod_ic_{\la_i}(E'')$ denotes a monomial in the Chern
classes of $E''$, and $R^0$ is applied to $c_\la$ as above.

\begin{example}
We have
\begin{gather*}
[X_{(5,4,2)}] =(1-R_{12})(1-R_{13})(1-R_{23})\, c_{(5,4,2)} \\
= (1-R_{12}-R_{13}-R_{23}+R_{12}R_{13}+R_{12}R_{23}+R_{13}R_{23}-
R_{12}R_{13}R_{23})
\, c_{(5,4,2)} \\
= c_{(5,4,2)}-c_{(6,3,2)}-c_{(6,4,1)}-c_{(5,5,1)}+c_{(7,3,1)}+c_{(6,4,1)} 
+c_{(6,5,0)}-c_{(7,4,0)}
 \\ 
=c_5c_4c_2-c_6c_3c_2-c_5^2c_1+c_7c_3c_1+c_6c_5-c_7c_4
= \left|\begin{array}{ccc}
c_5 & c_6 & c_7 \\ c_3 & c_4 & c_5 \\ 1 & c_1 & c_2
\end{array}\right|.
\end{gather*}
\end{example}

Soon after he proved (\ref{giambelli}), Giambelli published a second 
paper \cite{G2} where he studied a parallel formalism in the theory of
symmetric polynomials. For any integer $r$, let $e_r(Y_{(d)})$ denote the
$r$-th elementary symmetric polynomial in the commuting variables
$Y_{(d)}=(y_1,\ldots,y_d)$. Given a partition $\mu$ with at most $d$
nonzero parts, consider
\begin{equation}
\label{def1}
s_\mu(Y_{(d)}) =
\left.\det(y_i^{\mu_i+d-j})_{1\leq i,j \leq d}
\right\slash\det(y_i^{d-j})_{1\leq i,j \leq d}.
\end{equation}
The $s_{\mu}(Y_{(d)})$ for varying $\mu$ may be identified with the
polynomial characters of the general linear group $\GL_d(\C)$; this
had been established a few years earlier by Schur in his 1901 thesis
\cite{S1} (in fact, equation (\ref{def1}) is a special case of the
Weyl character formula).  For any partition $\la$, let $\wt{\la}$ be
the conjugate partition, whose Young diagram is the transpose of the
diagram of $\la$. Then Jacobi \cite{J} and Trudi proved that the {\em
  Schur polynomial} $s_{\wt{\la}}(Y_{(d)})$ satisfies
\begin{equation}
\label{JT}
s_{\wt{\la}}(Y_{(d)})=R^0\, e_{\la}(Y_{(d)})=\det(e_{\la_i +
  j-i}(Y_{(d)}))_{i,j}
\end{equation}
for any $\la\subset (d^m)$, where $e_\la :=\prod_ie_{\la_i}$. We may
thus consider $s_{\wt{\la}}$ as a polynomial in the algebraically
independent variables $e_r$, for $1\leq r \leq d$. In the theory of
characteristic classes, the variables $y_1,\ldots, y_d$ represent the
Chern roots of the quotient vector bundle $E''$, and $e_r(Y_{(d)})$ is
identified with the $r$-th Chern class $c_r(E'')$. Using $c(E'')$ to
denote the total Chern class $1+c_1(E'')+ \cdots +c_d(E'')$ of
$E''$, we obtain the following restatement of equations
(\ref{giambelli}) and (\ref{giambelli2}).

\begin{thm}[Classical Giambelli, \cite{G1}]
\label{Gthm}
For any partition $\la$ whose diagram fits inside an
$m\times (n-m)$ rectangle, we have
\begin{equation}
\label{giam3}
[X_\la] = s_{\wt{\la}}(c(E''))
\end{equation}
in the cohomology ring of $\G(m,n)$.
\end{thm}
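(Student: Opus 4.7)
The plan is to obtain the stated identity as a direct reformulation of the classical Giambelli determinantal formula (\ref{giambelli}) by means of the Jacobi-Trudi identity (\ref{JT}), so the work consists of matching the two sides via the splitting principle.

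First, I would apply the splitting principle to the rank $d$ quotient bundle $E''$: let $y_1,\ldots,y_d$ denote its Chern roots, so that $c_r(E'')$ is identified with the elementary symmetric polynomial $e_r(Y_{(d)})$ for every $r$, and consequently any symmetric polynomial in $y_1,\dots,y_d$ becomes a well-defined polynomial in the Chern classes $c_1(E''),\ldots,c_d(E'')$. With this identification, the expression $s_{\wt{\la}}(c(E''))$ is nothing other than $s_{\wt{\la}}(Y_{(d)})$ interpreted inside $\HH^*(X,\Z)$.

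Next, I would invoke the Jacobi-Trudi identity (\ref{JT}), which asserts
\[
s_{\wt{\la}}(Y_{(d)}) = \det(e_{\la_i+j-i}(Y_{(d)}))_{1\leq i,j\leq m}.
\]
Substituting $e_r(Y_{(d)}) = c_r(E'')$ converts the right-hand side into $\det(c_{\la_i+j-i}(E''))_{1\leq i,j\leq m}$, and by the classical Giambelli formula (\ref{giambelli}) this determinant equals $[X_\la]$. Equivalently, one can route the argument through the raising operator reformulation (\ref{vand}), applying $R^0$ to the monomial $c_\la(E'')$ as in (\ref{giambelli2}), which is the form most useful for later generalizations.

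The only point requiring attention is that the $m \times m$ determinant in (\ref{JT}) must be compatible with the fact that $\wt{\la}$ a priori has fewer than $m$ parts; this is harmless, since padding $\la$ by extra zero parts appends upper unitriangular rows to the Jacobi-Trudi matrix (entries $c_{j-i}$ with $c_0=1$ on the diagonal and $c_r=0$ for $r<0$ below it), leaving the determinant unchanged. There is no substantive obstacle here: the analytic content is entirely contained in (\ref{giambelli}) and (\ref{JT}), and the theorem is a notational repackaging whose point is to introduce the Schur-polynomial-in-Chern-classes viewpoint that will be extended to the isotropic and partial flag settings in the sequel.
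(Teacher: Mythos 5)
Your proposal is correct and follows exactly the route the paper itself takes: Theorem~\ref{Gthm} is presented as a restatement of (\ref{giambelli}) and (\ref{giambelli2}) obtained by identifying $e_r(Y_{(d)})$ with $c_r(E'')$ via the Chern roots of $E''$ and invoking the Jacobi–Trudi identity (\ref{JT}). Your remark about padding $\la$ with zero parts to get an $m\times m$ Jacobi–Trudi matrix is a harmless and correct clarification that the paper leaves implicit.
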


For more on the connection between the representation
theory of the general linear group and the classical Schubert
calculus, see \cite{Be, BK, Tcon}.

\section{Giambelli formulas for isotropic Grassmannians}
\label{isogiam}

The study of homogeneous spaces of Lie groups was extended further
during the first half of the twentieth century by the work of \'Elie
Cartan \cite{Ca1, Ca2} and Ehresmann \cite{Eh}. They considered the
irreducible compact hermitian symmetric spaces, which generalize the
Grassmannian $\G(m,n)$, and began exploring their cohomology
rings. Rather than proceeding along the lines of the classical
Schubert calculus, this work used Cartan's theory of invariant
differential forms. It was only in the 1980s that analogues of Pieri's
rule and Giambelli's formula were obtained for all hermitian
symmetric Grassmannians, in the work of Hiller and Boe \cite{HB} and
Pragacz \cite{P2}. More recently, Pragacz and Ratajski \cite{PR1, PR3}
proved Pieri type rules and Buch, Kresch, and the author \cite{BKT1,
  BKT2, BKT4} generalized both the Pieri and Giambelli formulas of
\cite{HB, P2} to arbitrary symplectic and orthogonal Grassmannians,
using different notions of special Schubert classes.  We will follow
the references \cite{BKT2, BKT4} in this section.

Let $V = \C^N$ and equip $V$ with a nondegenerate skew-symmetric or
symmetric bilinear form $(\ ,\,)$.  A subspace $\Sigma$ of $V$ is
called {\em isotropic} if the restriction of $(\ ,\,)$ to $\Sigma$
vanishes identically. Since the form is nondegenerate, the dimension
of any isotropic subspace is at most $N/2$.  Given a nonnegative
integer $m \leq N/2$, we let $X$ denote the complex manifold which
parametrizes all the isotropic subspaces of dimension $m$ in $V$. This
space has a transitive action of the group $G =\Sp(V)$ or $G=\SO(V)$
of linear automorphisms preserving the form on $V$, unless $m=N/2$ and
the form is symmetric. In the latter case the space of isotropic
subspaces has two isomorphic connected components, each a single
$\SO(V)$ orbit.

An {\em isotropic flag} $F_\bull$ is a complete flag $$0 = F_0
\subsetneq F_1 \subsetneq \dots \subsetneq F_N = V$$ of subspaces of
$V$ such that $F_i = F_j^\perp$ whenever $i+j=N$; in particular, $F_i$ 
is an isotropic subspace for all $i\leq N/2$. Let $B\subset G$
denote the Borel subgroup which is the stabilizer of the flag
$F_\bull$. The Schubert cells in $X$ relative to the flag $F_\bull$
are the orbit closures for the natural action of $B$ on $X$. We call a
subset $\cP$ of $[1,N]$ of cardinality $m$ an {\em index set} if for
all $i,j\in \cP$ we have $i+j\neq N+1$. A point $\Sigma$ in $X$
defines an index set $\cP(\Sigma)$ by the prescription
\[
\cP(\Sigma) := \{ p \in [1,N] \mid \Sigma \cap F_p \supsetneq \Sigma \cap
F_{p-1} \} \,,
\]
since no vector in $F_j\ssm F_{j-1}$ is orthogonal to a vector in
$F_{N+1-j}\ssm F_{N-j}$, for each $j$. In the same manner as in \S
\ref{class}, equation (\ref{Ptocell}) establishes a one to one
correspondence between Schubert cells $X^\circ_\cP(F_\bull)$ relative
to $F_\bull$ and index sets $\cP$.

The closures of the Schubert cells are the Schubert varieties
$X_\cP(F_\bull)$, and their classes $[X_\cP]$ in $\HH^*(X,\Z)$ are the
Schubert classes, which form an additive basis of $\HH^*(X,\Z)$.
Moreover, there are special Schubert varieties, consisting of the
locus of subspaces $\Sigma$ in $X$ which meet a given subspace $F_j$
non-trivially, and corresponding special Schubert classes, which
generate the cohomology ring of $X$. In the following sections, we
will see that the Schubert varieties and classes may equivalently be
indexed by {\em $k$-strict partitions} and {\em typed $k$-strict
  partitions}. As in \S \ref{class}, this is a convention which makes
their codimension (or cohomological degree) apparent, and we will
require in order to state the Giambelli formulas of this section.

\subsection{Symplectic Grassmannians}
\label{sgs}

Suppose that $N=2n$ is even and the form $(\ ,\,)$ is skew-symmetric,
so that $X=\IG(m,2n)$ is a {\em symplectic Grassmannian}. Write
$m=n-k$ for some $k$ with $0 \leq k \leq n-1$. If $k=0$, then $X$ is
the Lagrangian Grassmannian $\LG(n,2n)$, and if $k=n-1$, then $X$ is
projective $2n-1$ space $\bP^{2n-1}$, since every line through the
origin in $V$ is isotropic. These are the only hermitian symmetric
examples. The space $\IG(n-k,2n)$ may be identified with a quotient
$\Sp_{2n}(\C)/P_k$ of the symplectic group $\Sp_{2n}(\C)$ by a maximal
parabolic subgroup $P_k$.  For instance, the subgroup $P_0$ is known
as the {\em Siegel parabolic} and consists of those matrices of the
symplectic group whose lower left quadrant is an $n\times n$ zero
matrix (when we choose the standard symplectic basis for $\C^{2n}$, as
in \S \ref{ddgeom}).

 Consider the infinite set of pairs 
 $$\Delta^\circ = \{(i,j) \in \N \times \N \mid 1\leq i<j \}$$ and
 define a partial order on $\Delta^\circ$ by agreeing that
 $(i',j')\leq (i,j)$ if $i'\leq i$ and $j'\leq j$. A subset $D$ of
 $\Delta^\circ$ is an order ideal if $(i,j)\in D$ implies $(i',j')\in
 D$ for all $(i',j')\in \Delta^\circ$ with $(i',j') \leq (i,j)$. In
 the next figure, the pairs $(i,j)$ in a typical finite order ideal
 are displayed as positions in a matrix above the main diagonal.

\[
\includegraphics[scale=0.5]{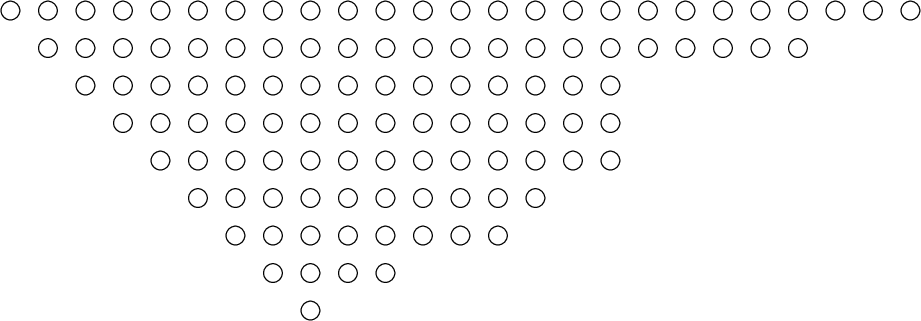}
\]

A partition $\la$ is {\em $k$-strict} if no part $\la_j$ greater than
$k$ is repeated; if $k=0$ this means that $\la$ is a strict partition,
i.e., has distinct nonzero parts.  To any $k$-strict partition $\la$
we associate the order ideal
\[
\cC(\la) := \{ (i,j)\in \Delta^\circ \ |\ \la_i + \la_j > 2k + j-i \}.
\]
The set $\cC(\la)$ is an important invariant of the partition $\la$
which appears in the definitions of both the index set $\cP(\la)$ and
the raising operator expression $R^\la$ associated to $\la$ (equations
(\ref{Pequ}) and (\ref{ReqC}) below; compare also with (\ref{Cweq})).

The Schubert varieties on $\IG(n-k,2n)$ are indexed by $k$-strict
partitions whose diagrams fit in an $(n-k)\times (n+k)$ rectangle.
Any such $\la$ corresponds to an index set $$\cP(\la)=\{p_1(\la)<\cdots
< p_m(\la)\}$$ given by the prescription
\begin{equation}
\label{Pequ}
p_j(\lambda) := n+k+j-\lambda_j - \#\{i<j \ |\  (i,j)\in \cC(\la) \}.
\end{equation}
The reader is invited to show that equation (\ref{Pequ}) gives a
bijection between index sets and $k$-strict partitions as claimed (a
proof is provided in \cite[\S 4.1]{BKT1}).  If $F_\bull$ is a fixed
isotropic flag of subspaces in $V$, we obtain the Schubert cell
\begin{equation}
\label{IGX1}
X^\circ_\la(F_\bull) := \{ \Sigma \in X \mid \cP(\Sigma) = \cP(\la) \} 
\end{equation}
and the Schubert variety $X_\lambda(F_\bull)$ is the closure of this cell.
One can show that
\begin{equation}
\label{IGX2}
   X_\lambda(F_\bull) = \{ \Sigma \in X \mid \dim(\Sigma \cap
   F_{p_j(\lambda)}) \geq j \ \ \ \forall\, 1 \leq j \leq
   \ell(\lambda) \} \,,
\end{equation}
where the {\em length} $\ell(\la)$ is the number of (nonzero) parts of
$\la$; see for example \cite[\S 4.2]{BKT1} and \cite[App.\ A]{BKT4}.
This variety has codimension $|\la|$ and defines, using Poincar\'e
duality, a Schubert class $[X_{\la}]$ in $\HH^{2|\la|}(\IG,\Z)$.

The {\em special Schubert varieties} are given by
\[
X_r(F_\bull)= \{\Sigma\in X\ |\ \Sigma\cap F_{n+k+1-r}\neq 0 \}
\] 
for $1\leq r \leq n+k$, and their classes $[X_r]$ are the {\em special
  Schubert classes}. Let $E'$ denote the tautological rank $(n-k)$
vector bundle over $\IG(n-k,2n)$, $E$ the trivial rank $2n$ vector
bundle, and $E''=E/E'$ the quotient bundle. As in the example of the
type A Grassmannian in \S \ref{class}, we have $[X_r]=c_r(E'')$ for
$1\leq r \leq n+k$.

For any $k$-strict partition $\la$, we define the operator
\begin{equation}
\label{ReqC}
R^{\la} := \prod_{i<j} (1-R_{ij})\prod_{(i,j) \in \cC(\la)}
(1+R_{ij})^{-1},
\end{equation}
where the first product is over all pairs $i<j$ and second product is
over pairs $i<j$ such that $\la_i+\la_j > 2k+j-i$. Note that equation
(\ref{ReqC}) is a multiplicative analogue of equation (\ref{Pequ}). If
$c= 1 + c_1t+c_2t^2+\cdots$ is any formal power series in commuting
variables $c_r$, we define the {\em theta polynomial} $\Ti_\la$ by
\begin{equation}
\label{Thet}
\Ti_\la(c) := R^{\la}\, c_\la.
\end{equation}

\begin{thm}[Giambelli for $\IG$, \cite{BKT2}]
\label{IGthm}
For any $k$-strict partition $\la$ whose diagram fits inside an
$(n-k)\times (n+k)$ rectangle, we have
\begin{equation}
\label{Theq}
[X_\la] =\Ti_\la(c(E''))
\end{equation}
in the cohomology ring of $\IG(n-k,2n)$.
\end{thm}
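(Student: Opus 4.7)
The plan is to reduce the Giambelli identity to the geometric Pieri-type rule for $\IG(n-k,2n)$ established in \cite{BKT1}, following the strategy of \cite{BKT2}. The special Schubert classes $c_r(E'')$ for $1\le r\le n+k$ generate the cohomology ring $\HH^*(\IG(n-k,2n),\Z)$, and the Schubert classes $[X_\la]$, indexed by $k$-strict partitions $\la$ fitting in the $(n-k)\times(n+k)$ rectangle, form a $\Z$-basis. The theta polynomials satisfy the trivial base case $\Ti_{(r)}=c_r$, since any single raising operator $R_{ij}$ sends the one-term monomial $c_{(r)}$ to a product containing $c_{-1}=0$. Consequently, it will suffice to establish a matching algebraic Pieri identity
\[
c_r\cdot\Ti_\mu(c) \;=\; \sum_{\nu}N^{\nu}_{\mu,r}\,\Ti_\nu(c)
\]
in $\Z[c_1,c_2,\ldots]$, where $\nu$ runs over the $k$-strict Pieri covers of $\mu$ and the constants $N^{\nu}_{\mu,r}$ are those appearing in the geometric Pieri rule of \cite{BKT1}. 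Specialising $c_r\mapsto c_r(E'')$ and comparing with that rule forces $\Ti_\la(c(E''))=[X_\la]$ by induction in the stable limit, after which the embeddings $\IG(n-k,2n)\hookrightarrow\IG(n+1-k,2n+2)$, which surject on Schubert classes, descend the identity to the given $\IG(n-k,2n)$.

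The core combinatorial step is the algebraic Pieri identity, and here the raising-operator calculus developed in \cite{BKT2, T4} is essential. I would expand
\[
R^\mu\cdot(c_r\,c_\mu) \;=\; \prod_{i<j}(1-R_{ij})\prod_{(i,j)\in\cC(\mu)}(1+R_{ij})^{-1}\cdot c_r\,c_\mu,
\]
regroup the resulting monomials $c_\al$ according to the $k$-strict partition shape of the underlying integer sequence $\al$, and match the grouped contributions against the expansion of $\sum_\nu N^\nu_{\mu,r}\,R^\nu c_\nu$. The matching will rely on telescoping identities among the formal operators $(1\pm R_{ij})^{\pm 1}$, relating $R^\mu$ to $R^\nu$ whenever $\nu$ is obtained from $\mu$ by a Pieri move.

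The main obstacle is precisely the dependence of the operator $R^\nu$ on the order ideal $\cC(\nu)$, which can differ from $\cC(\mu)$ in several pairs simultaneously whenever a Pieri move redistributes boxes across the column $k$ threshold. Unlike the type A setting of formula (\ref{giambelli2}), where $R^0$ is partition-independent and Giambelli follows from a Vandermonde-style identity, here the correction factors $\prod(1+R_{ij})^{-1}$ on the two sides of the Pieri identity do not line up term-by-term, and one must exhibit a family of cancellations among them. Proving these cancellations by systematically analysing how a single added box alters the defining inequalities $\la_i+\la_j>2k+j-i$ is the technical heart of the argument, and the same raising-operator bookkeeping, with typed refinements, should carry over to the orthogonal cases treated later in the paper.
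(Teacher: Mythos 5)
Your overall strategy is exactly the one taken in the paper and in \cite{BKT2}: reduce the Giambelli formula to showing that the raising-operator expressions $R^\la\, c_\la$ satisfy the geometric Pieri rule (\ref{IGpieri}) of \cite{BKT1}, combine this with the trivial base case $\Ti_{(r)}=c_r$, and conclude by a recursive argument. You also correctly identify the central technical difficulty, namely the dependence of the operator $R^\nu$ on the order ideal $\cC(\nu)$.

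There is, however, a genuine gap in the plan: the algebraic Pieri identity
$c_r\cdot\Ti_\mu(c)=\sum_\nu N^\nu_{\mu,r}\,\Ti_\nu(c)$
does \emph{not} hold in the free polynomial ring $\Z[c_1,c_2,\ldots]$ as you assert. It holds only modulo the ideal generated by the relations (\ref{crels}),
$c_r^2+2\sum_{i=1}^r(-1)^i c_{r+i}c_{r-i}=0$ for $r>k$,
which are precisely the relations that the Chern classes $c_r(E'')$ satisfy in $\HH^*(\IG(n-k,2n))$ and which persist in the stable limit. A concrete counterexample is immediate: take $k=0$, $\mu=(1)$, $p=1$. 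Then $c_1\cdot Q_1(c)=c_1^2$, whereas the geometric Pieri rule gives $c_1\cdot[X_1]=2[X_2]$, so the right-hand side would be $2Q_2(c)=2c_2$; clearly $c_1^2\neq 2c_2$ in $\Z[c_1,c_2,\ldots]$. If you run your proposed expansion and try to match monomials $c_\al$ in the free ring, the two sides will not agree term by term, and the discrepancy is governed exactly by the elements in (\ref{crels}). This is not a cosmetic issue: the paper singles out these relations as a crucial structural feature distinguishing types B/C/D from type A, and their discovery and use is one of the main points of \cite{BKT2}. Your argument must therefore be carried out in the quotient ring $\Z[c_1,c_2,\ldots]/\langle\text{relations (\ref{crels})}\rangle$ (equivalently, directly in $\HH^*(\IG)$ and its stable limit), and the ``matching of grouped contributions'' you sketch needs to invoke these relations explicitly; a purely formal telescoping among the operators $(1\pm R_{ij})^{\pm1}$ in the free ring cannot succeed. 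The paper describes what replaces it as a substitution algorithm exploiting alternating properties of $R^\la c_\la$ that depend on $\cC(\la)$, consistent with, but more delicate than, the telescoping picture you propose.
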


\begin{example}
Let $k=1$ and $\la$ be the $1$-strict partition $(3,1,1)$, with 
$\cC(\la)=\{(1,2)\}$. Then the following computation holds in the
ring $\HH^*(\IG(4,10),\Z)$:
\[
[X_{(3,1,1)}] =
\frac{1-R_{12}}{1+R_{12}}(1-R_{13})(1-R_{23})\, c_{(3,1,1)} =
(1-2R_{12}+2R_{12}^2)(1-R_{13}-R_{23}) \, c_{(3,1,1)}
\]
\[
= c_{(3,1,1)} - 2 c_{(4,0,1)} - c_{(4,1)} +2 c_5 -c_{(3,2)} +2 c_{(4,1)} -2 c_5=
c_3c_1^2 - c_4 c_1 - c_3 c_2.
\]
\end{example}

Comparing (\ref{Theq}) with (\ref{giambelli2}) and (\ref{giam3}), we
see that the polynomials $\Ti_\la$ play the role of the Schur
polynomials for the Giambelli problem on $\IG(n-k,2n)$. An important
difference with the story for the type A Grassmannian is that there
are {\em relations} among the $c_r$ which persist even as
$n\to\infty$, namely:
\begin{equation}
\label{crels}
\frac{1-R_{12}}{1+R_{12}}\, c_{(r,r)} = 
c_r^2 + 2\sum_{i=1}^r(-1)^i c_{r+i}c_{r-i}= 0,
\ \  \text{for all} \ \,  r > k.
\end{equation}
Another difference is that the raising operator expressions $R^\la$
which enter in (\ref{Thet}) depend on the partition $\la$ (see also
Example \ref{interpolate} below).

We next specialize the above to the Lagrangian Grassmannian
$\LG(n,2n)$, which is the case where $k=0$. 
The Schubert classes in $\HH^*(\LG(n,2n),\Z)$
are indexed by strict partitions $\la$ whose diagrams fit inside a 
square of side $n$. The theta polynomial (\ref{Thet}) specializes to
a {\em $Q$-polynomial}
\begin{equation}
\label{Qdef}
Q_\la(c) := \prod_{i<j}\frac{1-R_{ij}}{1+R_{ij}}\,c_{\la}.
\end{equation}
The Giambelli formula of Theorem \ref{IGthm} for $\LG$ becomes
\begin{equation}
\label{LGgiam}
[X_\la] = Q_\la(c(E'')).
\end{equation}
Following Pragacz \cite{P2}, formula (\ref{LGgiam}) may
be expressed using a Schur Pfaffian, as follows. For partitions $\la=(a,b)$
with only two parts, we have
\begin{equation}
\label{giamQ0}
[X_{(a,b)}]=\frac{1-R_{12}}{1+R_{12}}\, c_{(a,b)} = 
c_ac_b-2c_{a+1}c_{b-1}+2c_{a+2}c_{b-2}-\cdots
\end{equation}
while for $\la$ with $3$ or more parts,
\begin{equation}
\label{giamQ}
[X_\la] = \Pf([X_{(\la_i,\la_j)}])_{1\leq i<j \leq 2\ell'}
\end{equation}
where $\ell'$ is the least positive integer such that $2\ell' \geq
\ell(\la)$.

As we alluded to above, the identities (\ref{giamQ0}) and
(\ref{giamQ}) go back to the work of Schur on the projective
representations of symmetric groups \cite{HH, Jo, S2, St1}, where he
introduced a family of symmetric functions $\{Q_\la(X)\}$ known as
Schur $Q$-functions.  We let $X=(x_1,x_2,\ldots)$ be a list of
variables, define $q_r(X)$ by the equation
\[
\prod_{i=1}^{\infty}\frac{1+x_it}{1-x_it} = \sum_{r=0}^{\infty}q_r(X)t^r
\]
and then use the same relations (\ref{giamQ0}) and (\ref{giamQ}) with
$q_r(X)$ in place of $c_r$ to define $Q_{(a,b)}(X)$ and then
$Q_\la(X)$, for each strict partition $\la$. Once more we emphasize
that there are relations among the $q_r$, the simplest being
$q_1^2=2q_2$; hence, the above polynomials which define $Q_\la(X)$ are
not uniquely determined. However, the equivalence of the raising
operator and Pfaffian definitions of $Q_\la$ is a formal consequence
of the following Pfaffian identity from \cite{S2}:
\[
\prod_{1\leq i<j \leq 2\ell'}\frac{\x_i-\x_j}{\x_i+\x_j} =
\Pf\left(\frac{\x_i-\x_j}{\x_i+\x_j}\right)_{1\leq i,j \leq 2\ell'},
\]
which holds in the quotient field of $\Z[\x_1,\ldots,\x_{2\ell'}]$.

\begin{remark}
\label{rmk1}
Although the definition (\ref{Qdef}) is not standard, it is in direct
analogy with the usage of the term `Schur polynomial' in type A. We
reserve the name `$Q$-polynomial' for the polynomial in the variables
$c_r$ given in (\ref{Qdef}), and also for its principal
specialization, when $c_r$ is replaced by $q_r(X)$ for each integer
$r$. This nomenclature extends to the theta polynomials; compare
(\ref{Thet}) with (\ref{Tidefn}) in \S \ref{ste}.
\end{remark}

\begin{example}
\label{interpolate}
Let $\la=(\la_1,\la_2,\ldots)$ be a $k$-strict partition. If
$\la_i\leq k$ for each $i$, then
\begin{equation}
\label{Schurdet}
\Theta_\la(c) = \prod_{i<j}(1-R_{ij})\, c_\la  = 
 \det(c_{\la_i+j-i})_{i,j},
\end{equation}
while if $\la_i>k$ for all nonzero parts $\la_i$, then
\begin{equation}
\label{SchurPf}
\Theta_\la(c) = \prod_{i<j}\frac{1-R_{ij}}{1+R_{ij}}\,c_{\la} = 
\Pf\left(\frac{1-R_{12}}{1+R_{12}}\,c_{\la_i,\la_j}\right)_{i<j}.
\end{equation}
We deduce that as $\la$ varies, the polynomial $\Theta_\la(c)$
interpolates between the Jacobi-Trudi determinant (\ref{Schurdet}) and
the Schur Pfaffian (\ref{SchurPf}). In general, the inclusion of a
pair $(i,j)$ in the set $\cC(\la)$ which specifies the denominators in
(\ref{ReqC}) depends not only on the size of $\la_i$ and $\la_j$, but
also on their relative position in the sequence $\la$.
\end{example}

\subsection{Orthogonal Grassmannians}
\label{ogs}

Consider the case where $m < N/2$ and the form $(\ ,\,)$ is symmetric,
so that $X=\OG(m,N)$ is an {\em orthogonal Grassmannian}. If $N=2n+1$
is odd, then the Schubert varieties in $\OG(m,2n+1)$ are indexed by
the same set of $k$-strict partitions that index the Schubert
varieties in $\IG(m,2n)$. For any $k$-strict partition $\la$, let
$\ell_k(\la)$ denote the number of parts $\la_i$ of $\la$ which are
strictly greater than $k$. Let $E''_{\IG}$ and $E''_{\OG}$ be the
universal quotient vector bundles over $\IG(n-k,2n)$ and
$\OG(n-k,2n+1)$, respectively. One then knows (see for example
\cite[\S 3.1]{BS}) that the map which sends $c_r(E''_{\IG})$ to
$c_r(E''_{\OG})$ for all $r$ extends to an isomorphism of graded rings
\[
\HH^*(\IG(n-k,2n),\Q) \stackrel{\sim}\longrightarrow \HH^*(\OG(n-k,2n+1),\Q),
\]
which sends a Schubert class
$[X_\la]$ on $\IG$ to $2^{\ell_k(\la)}$ times the corresponding
Schubert class on $\OG$. This isomorphism shows that the Schubert
calculus on symplectic and odd orthogonal Grassmannians coincides, up
to well determined powers of two. In particular, one can easily
transfer the Giambelli formula of \S \ref{sgs} to $\OG(m,2n+1)$.

We next assume that $N=2n$ is even, so that $m=n-k$ with $k> 0$. A
{\em typed $k$-strict partition} is a pair consisting of a $k$-strict
partition $\la$ together with an integer in $\{0,1,2\}$ called the
{\em type} of $\la$, and denoted $\type(\la)$, such that
$\type(\la)>0$ if and only if $\la_i=k$ for some $i\geq 1$. We usually
omit the type from the notation for the pair $(\la,\type(\la))$. To
any typed $k$-strict partition $\la$ we associate the order ideal
\[
\cC'(\la) := \{ (i,j)\in \Delta^\circ \ |\ \la_i + \la_j \geq 2k + j-i \}
\]
in $\Delta^\circ$. The Schubert cells in the cohomology of the even
orthogonal Grassmannian $X=\OG(n-k,2n)$ are indexed by the typed
$k$-strict partitions $\la$ whose diagrams are contained in an
$(n-k)\times (n+k-1)$ rectangle.  For any such $\la$, define the index
function $p_j=p_j(\la)$ by
\begin{multline*} p_j(\lambda) := n+k+j-\lambda_j - 
   \#\{\,i<j\ |\ (i,j)\in \cC'(\la)\,\} \\
   {} - \begin{cases} 
      1 & \text{if $\lambda_j > k$, or $\lambda_j=k < \lambda_{j-1}$ and
        $n+j+\type(\lambda)$ is odd}, \\
      0 & \text{otherwise}.
   \end{cases}
\end{multline*}
We obtain an index set $\cP(\la)$ associated to any typed $k$-strict 
partition $\la$ as above, and a Schubert cell $X^\circ_\cP(F_\bull)$ 
defined by (\ref{IGX1}).

The Schubert variety $X_\cP(F_\bull)$ is best defined as the closure
of $X^\circ_\cP(F_\bull)$, since a geometric description of
$X_\cP(F_\bull)$ analogous to (\ref{IGX2}) involves subtle parity
conditions (see \cite[App.\ A]{BKT4}).  We say that two maximal
isotropic subspaces $E$ and $F$ of $V$ are in the same family if
$$\dim(E\cap F)\equiv n \,(\text{mod}\, 2).$$ Fix a maximal isotropic
subspace $L$ of $V$, so that $\dim(L)=n$. The cohomology classes
$[X_\cP]$ of the Schubert varieties $X_\cP(F_\bull)$ in $\OG$ are are
independent of the choice of isotropic flag $F_\bull$ as long as $F_n$
is in the same family as $L$. If $\cP$ corresponds to $\la$, then the
associated Schubert class $[X_\la]=[X_\cP]$ in $\HH^{2|\la|}(X,\Z)$ is
said to have a type which agrees with the type of $\la$.

The special Schubert varieties in $\OG(m,2n)$ can be defined as before
by a single Schubert condition, as the locus of $\Sigma\in X$ which
intersect a given isotropic subspace or its orthogonal complement
non-trivially (see \cite[\S 3.2]{BKT1}).  The corresponding special
Schubert classes
\begin{equation}
\label{spcls}
\ta_1,\ldots,\ta_{k-1},\ta_k,\ta'_k,\ta_{k+1},\ldots,\ta_{n+k-1}
\end{equation}
are indexed by the typed $k$-strict partitions with a single nonzero
part, and generate the cohomology ring $\HH^*(X,\Z)$.  Here
$\type(\tau_k)=1$, $\type(\tau'_k)=2$, and if 
\[
0 \to E' \to E \to E'' \to 0
\] 
denotes the universal sequence of vector bundles over $X$, then we have 
\begin{equation}
\label{ctotau}
c_r(E'')=
\begin{cases}
\ta_r &\text{if $r< k$},\\
\ta_k+\ta_k' &\text{if $r=k$},\\
2\ta_r &\text{if $r> k$}.
\end{cases}
\end{equation}

We set $c_\alpha = \prod_i c_{\alpha_i}$. Given any typed $k$-strict
partition $\la$, we define the operator
\begin{equation}
\label{ReqD}
R^{\la} := \prod (1-R_{ij})\prod_{(i,j)\in \cC'(\la)}(1+R_{ij})^{-1}
\end{equation}
where the first product is over all pairs $i<j$ and the second product
is over pairs $i<j$ such that $\la_i+\la_j \geq 2k+j-i$. Let $R$ be
any finite monomial in the operators $R_{ij}$ which appears in the
expansion of the formal power series $R^\la$ in (\ref{ReqD}).  If
$\type(\la)=0$, then set $R \star c_{\la} := c_{R \,\la}$. Suppose
that $\type(\la)>0$, let $d$ be the least index such that $\la_d=k$,
and set $$\wh{\al} :=
(\al_1,\ldots,\al_{d-1},\al_{d+1},\ldots,\al_\ell)$$ for any integer
sequence $\al$ of length $\ell$. If $R$ involves any factors $R_{ij}$
with $i=d$ or $j=d$, then let $R \star c_{\la} := \frac{1}{2}\,c_{R
  \,\la}$. If $R$ has no such factors, then let
\[
R \star c_{\la} := \begin{cases}
\ta_k \,c_{\wh{R \,\la}} & \text{if  $\,\type(\la) = 1$}, \\
\ta'_k \, c_{\wh{R \,\la}} & \text{if  $\,\type(\la) = 2$}.
\end{cases}
\]
We define the {\em eta polynomial} $\Eta_\la$ by
\begin{equation}
\label{Etapoly}
\Eta_\la(c) := 2^{-\ell_k(\la)} R^{\la} \star c_\la.
\end{equation}
Note that $\Eta_\la(c)$ is really a polynomial in the variables
$\ta_r$ for $r\geq 1$ and $\ta_k'$, which are related to the variables $c_r$ 
by the formal equations (\ref{ctotau}).

\begin{thm}[Giambelli for $\OG$, \cite{BKT4}]
\label{OGthmD}
For every typed $k$-strict partition $\la$ whose diagram fits inside an
$(n-k)\times (n+k-1)$ rectangle, we have 
\begin{equation}
\label{OGgiamb}
[X_\la] = \Eta_\la(c(E''))
\end{equation}
in the cohomology ring of $\OG(n-k,2n)$. 
\end{thm}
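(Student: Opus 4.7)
The plan is to show that the eta polynomials $\Eta_\la(c(E''))$ satisfy the same Pieri-type recursion as the Schubert classes $[X_\la]$ on $X = \OG(n-k,2n)$, and then to conclude by induction on $|\la|$. Since the special Schubert classes (\ref{spcls}) generate $\HH^*(X,\Z)$, any cohomology class is determined by its products with these generators, so identifying the two sides of (\ref{OGgiamb}) reduces to verifying the matching recursion, together with a base case.

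For the base case, when $\la$ has a single nonzero part the raising-operator product in (\ref{ReqD}) is trivial and $\Eta_\la(c)$ reduces, via the $\star$ operation, to the appropriate single special class $\ta_r$ or $\ta_k'$; thus (\ref{OGgiamb}) specializes directly to the definitions (\ref{ctotau}) relating $c_r(E'')$ to the special classes. For the inductive step, it suffices to prove the raising-operator identities
\[
\ta_r \cdot \Eta_\la(c) = \sum_\mu \Eta_\mu(c), \qquad \ta_k' \cdot \Eta_\la(c) = \sum_\mu \Eta_\mu(c),
\]
where $\mu$ runs over the typed $k$-strict partitions prescribed by the orthogonal Pieri rule of \cite{BKT1}. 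Granting these, both sides of (\ref{OGgiamb}) have the same products with every generator, so the induction closes.

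The hardest step will be the combinatorial verification of the Pieri identities at the level of raising operators, because of the delicate type-bookkeeping built into the $\star$ operation and the $2^{-\ell_k(\la)}$ normalization in (\ref{Etapoly}). When multiplication by a special class creates or destroys a part equal to $k$, the integer $\ell_k(\mu)$ changes and the type of $\mu$ may shift, so the scaling factors and the star action must transform consistently across the sum. Via the raising operator calculus of \cite{BKT2, T6}, this consistency reduces to a combinatorial identity describing how the order ideal $\cC'(\la)$ (which specifies the denominators in (\ref{ReqD})) evolves under the Pieri moves. I would establish this identity by matching each combinatorial Pieri summand with a term of the raising-operator expansion, using $k$-transition tree arguments in the spirit of \cite{BKT4, T6}. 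A secondary concern is to check that the argument is compatible with the stabilization $\OG(n-k,2n) \hookrightarrow \OG(n+1-k,2n+2)$, so that the identity (\ref{OGgiamb}) is $n$-independent; this follows once the raising-operator expansion is shown to be insensitive to $n$, since the definitions (\ref{ReqD})--(\ref{Etapoly}) manifestly are.
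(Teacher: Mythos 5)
Your high-level strategy -- base the proof on the orthogonal Pieri rule of \cite{BKT1} and proceed by induction on weight, showing that the eta polynomials obey the same recursion as the Schubert classes -- is indeed the strategy in \cite{BKT4}, and it is precisely the strategy that \cite{BKT2} carries out for the symplectic case (Theorem \ref{IGthm}). Your base case is also verified correctly: for a single part $r$, one checks directly from (\ref{Etapoly}) and (\ref{ctotau}) that $\Eta_\la(c(E''))$ is $\ta_r$, $\ta_k$, or $\ta_k'$ as appropriate.

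Where your plan diverges from, and falls short of, the actual argument is at the step you correctly flag as the hardest: verifying the Pieri rule for the $\Eta_\la$ at the level of raising operators. You propose a direct term-by-term matching of Pieri summands against the raising-operator expansion, tracking how $\cC'(\la)$, the type, and the $2^{-\ell_k(\la)}$ factor evolve, and you invoke $k$-transition trees. But $k$-transition trees are a tool from \cite{T6} for computing mixed Stanley coefficients in splitting formulas; they play no role in the Pieri/Giambelli proof of \cite{BKT4}. More to the point, the direct attack you describe is exactly what is delicate and is not how \cite{BKT4} proceeds. The actual proof exploits two further structural inputs that you do not mention: the weight-space decomposition of $\HH^*(\OG(n-k,2n),\Q)$ induced by the involution of the $\text{D}_n$ Dynkin diagram (which swaps the two spin nodes and hence the two types), and a relation between the Schubert calculus on $\OG(n-k,2n)$ and $\OG(n-k,2n+1)$. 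These are what tame the type bookkeeping; without them, the alternating-sum analysis of $R^\la\star c_\la$ that works in the symplectic case (\cite{BKT2}, see \cite[\S 3]{T4}) does not carry over cleanly to the presence of $\ta_k$ and $\ta_k'$. So your plan correctly identifies the bottleneck but proposes to force through it combinatorially, whereas the paper circumvents it via the Dynkin symmetry and the odd/even comparison; as written, the key step of your proof is not filled in and is not along the lines that are known to succeed.

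One smaller point: the justification ``any cohomology class is determined by its products with these generators'' is not quite the right framing. What the Pieri rule actually provides is a triangular recursion that expresses each $[X_\la]$ in terms of $\ta_r\cdot[X_{\la'}]$ with $|\la'|<|\la|$ and lower-order Schubert classes; uniqueness then comes from this triangularity and the base case, not from an abstract nondegeneracy of products with a generating set.
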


\begin{example}
Consider the typed $1$-strict partition $\la=(3,1,1)$ with
$\type(\la)=2$.  The corresponding Schubert class in
$\HH^*(\OG(4,10),\Z)$ satisfies
\begin{align*}
[X_{(3,1,1)}] & = \frac{1}{2}
\frac{1-R_{12}}{1+R_{12}}\,\frac{1-R_{13}}{1+R_{13}}\,(1-R_{23}) \star
c_{(3,1,1)} \\ &= \frac{1}{2}(1-2R_{12}+2R_{12}^2)(1-2R_{13}-R_{23})
\star c_{(3,1,1)} \\ &= \ta_3\ta'_1(\ta_1+\ta_1') - 2\ta_4\ta'_1 -
\ta_3\ta_2 +\ta_5.
\end{align*}
In general, the Giambelli formula (\ref{OGgiamb}) expresses
the Schubert class $[X_\la]$ as a polynomial in the special Schubert
classes (\ref{spcls}) with {\em integer} coefficients.
\end{example}

To conclude this section, we consider the space of all maximal
isotropic subspaces of an even dimensional orthogonal vector space
$V$. This set is a disjoint union of two connected components, each
giving one family (or $\SO(V)$-orbit) of such subspaces. The
(irreducible) algebraic variety $\OG=\OG(n,2n)$ is defined by choosing
one of these two components. If $H$ is a hyperplane in $V$ on which
the restriction of the symmetric form is nondegenerate, then the map
$\Sigma \mapsto \Sigma\cap H$ gives an isomorphism between $\OG(n,2n)$
and $\OG(n-1,2n-1)$. The analysis for the (type B) odd orthogonal
Grassmannian in the maximal isotropic case therefore applies to
$\OG$. In particular, the Schubert classes $X_\la$ may be indexed by
strict partitions $\la$ with $\la_1\leq n-1$, and the Giambelli
formula for $\OG$ reads
\[
[X_\la] = P_\la(c(E'')),
\]
where $E''\to \OG$ is the universal quotient bundle and the $P$-polynomial
$P_\la$ is related to $Q_\la$ by the equation $P_\la = 2^{-\ell(\la)}Q_\la$.

\section{Cohomology of  $G/P$ spaces}
\label{cohgp}

In this section $G$ will denote a connected complex reductive Lie
group.  The main examples we will consider are the classical groups:
the general linear group $\GL_n(\C)$, the symplectic group
$\Sp_{2n}(\C)$, and the (odd and even) orthogonal groups
$\SO_N(\C)$. A closed algebraic subgroup $P$ of $G$ is called a
parabolic subgroup if the quotient $X=G/P$ is compact, or
equivalently, a projective algebraic variety. We proceed to study the
topology of the complex manifold $X$ by generalizing the
constructions found in the previous sections.

The Borel subgroups $B$ of $G$ are the maximal connected solvable
subgroups; a subgroup $P$ of $G$ is parabolic if and only if it
contains a Borel subgroup. Fix a Borel subgroup $B$, let $T\cong
(\C^*)^r$ be a maximal torus in $B$, and $W=N_G(T)/T$ be the Weyl
group of $G$. The simple reflections $s_\al$, which generate $W$, are
indexed by the set $\Delta$ of positive simple roots $\al$, and are
also in one-to-one correspondence with the vertices of the {\em Dynkin
  diagram} $D$ associated to the root system.  The parabolic subgroups
$P$ containing $B$ are in bijection with the subsets $\Delta_P$ of
$\Delta$, or the subsets of the vertices of $D$, as shown in the
figures of \S \ref{dloc}. In particular each root $\al\in \Delta$
corresponds to a maximal parabolic subgroup, which is associated to
the subset $\Delta\ssm\{\al\}$.  We let $W_P$ denote the subgroup of
$W$ generated by all the simple reflections in $\Delta_P$.

The {\em length} $\ell(w)$ of an element $w$ in $W$ is equal to the
least number of simple reflections whose product is $w$. We denote by
$w_0$ the element of longest length in $W$. It is known that every
coset in $W/W_P$ has a unique representative $w$ of minimal length; we
denote the set of all minimal length $W_P$-coset representatives by
$W^P$. The manifold $X$ has complex dimension equal to the length of
the longest element in $W^P$. The set $W^P$ (or the coset space
$W/W_P$) indexes the Schubert cells, varieties, and classes on $X$ as
follows. First, the Bruhat decomposition $G= \coprod_{w\in W} BwB$ of
the group $G$ induces a cell decomposition
\[
G/P = \coprod_{w\in W^P} BwP/P
\]
of the homogeneous space $X=G/P$. The cell $BwP/P$ is isomorphic to
the affine space $\C^{\ell(w)}$. Since we prefer the length of the
indexing element to equal the codimension of the cell in $X$, we
define the Schubert cell $X^\circ_w$ to be $Bw_0wP/P$. The Schubert
variety $X_w$ is defined as the closure of the Schubert cell
$X_w^\circ$, and its cohomology class $[X_w]$ lies in
$\HH^{2\ell(w)}(X,\Z)$.  This completes the additive description of
the cohomology of $X$, as a free abelian group on the basis of
Schubert classes $[X_w]$:
\[
\HH^*(X,\Z) = \bigoplus_{w\in W^P}\Z\, [X_w].
\]

\subsection{The Giambelli problem}
\label{giamprob}
We seek to generalize the classical Schubert
calculus of \S \ref{class} to the $G/P$ spaces in their natural cell
decompositions described above. By this we mean to understand as
explicitly as possible the multiplicative structure of the cohomology
ring $\HH^*(G/P,\Z)$, expressed in the basis of Schubert classes.
However, it is clear that the extension of the classical Pieri and
Giambelli formulas to $G/P$ depends on a choice of {\em special
Schubert classes} which generate the cohomology ring. For arbitrary
$G$, it is fortunate that there is one example of a parabolic $P$
where the choice of generating set is clear: the case when $P=B$ is a
Borel subgroup of $G$, and $X=G/B$ is the (complete) flag variety of
$G$. In this case, the cohomology ring $\HH^*(X,\Q)$ is generated by
the classes of the Schubert divisors $X_{s_\al}$, one for each simple
root $\al$.  Equivalently, we can form a generating set by taking any
$\Z$-basis of $\HH^2(X,\Z)$, which is related to the Schubert divisor
basis by a linear change of variables. In a seminal paper, Borel
\cite{Bo}, using the theory of group characters and associated
characteristic classes of line bundles over $G/B$, gave an invariant,
group theoretic approach to this question, which we recall below.

A {\em character} of the group $B$ is a homomorphism of algebraic
groups $B \to \C^*$. We denote the abelian group of characters of $B$
by $\wh{B}$. Observe that any character $\chi$ of $B$ is uniquely
determined by its restriction to $T$, since $B=T\ltimes U$ is the
semidirect product of $T$ and the unipotent subgroup $U$ of $B$, and
regular invertible functions on $U$ are constant. It follows that the
character group $\wh{T}$ of $T$ is isomorphic to $\wh{B}$.

If $\chi$ is a character of $B$, then we get an induced free action of
$B$ on the product $G\times \C$ by $b\cdot (g,z) =
(gb^{-1},\chi(b)z)$. The quotient space $L_\chi=(G\times \C)/B$, also
denoted by $G\times^B\C$, projects to the flag manifold $G/B$ by
sending the orbit of $(g,z)$ to $gB$. This makes $L_\chi$ into the
total space of a holomorphic line bundle over $G/B$, which is the
homogeneous line bundle associated to the weight $\chi$.  Let
$\Pic(G/B)$ be the Picard group of isomorphism classes of line bundles
on $G/B$, with the group operation given by the tensor product. Then
the map $\chi\mapsto L_\chi$ is a group homomorphism $\wh{B}\to
\Pic(G/B)$.  Composing this with the first Chern class map
$c_1:\Pic(G/B)\to \HH^2(G/B,\Z)$ gives a group homomorphism
\begin{gather*}
\wh{B} \to \HH^2(G/B,\Z), \\
\chi \mapsto c_1(L_\chi).
\end{gather*}
If $S(\wh{B})$ denotes the symmetric algebra of the $\Z$-module
$\wh{B}$, then the above map extends to a homomorphism of graded rings
\[
c: S(\wh{B}) \to \HH^*(G/B,\Z)
\]
called the {\em characteristic homomorphism}. 

Let $\Sh:=S(\wh{B})\otimes_\Z\Q$.  Borel \cite{Bo} proved that the
morphism $c$ is surjective, after tensoring with $\Q$, and that the
kernel of $c$ is the ideal generated by the $W$-invariants of positive
degree in $\Sh$, denoted $\langle\Sh^W_+\rangle$. We thus obtain the
classical Borel presentation of the cohomology ring of $G/B$,
\begin{equation}
\label{Bor1}
\HH^*(G/B,\Q) \cong \Sh/\langle\Sh^W_+\rangle.
\end{equation}
Furthermore, for any parabolic subgroup $P\supset B$, the projection map
$G/B\to G/P$ induces an injection $\HH^*(G/P)\hra \HH^*(G/B)$, and 
this inclusion is realized in the presentation (\ref{Bor1}) by taking 
$W_P$ invariants on the right hand side:
\begin{equation}
\label{Bor2}
\HH^*(G/P,\Q) \cong \Sh^{W_P}/\langle\Sh^W_+\rangle.
\end{equation}
One knows that if the algebraic group $G$ is {\em special} in the
sense of \cite{SeCh}, then the isomorphisms (\ref{Bor1}) and
(\ref{Bor2}) hold with $\Z$-coefficients.  For the classical groups,
this is the case if $G$ is the general linear group $\GL_n$ or the
symplectic group $\Sp_{2n}$.

\begin{example}
Suppose that $G=\GL_n(\C)$, and choose the Borel subgroup $B$ of 
upper triangular matrices, as in \S \ref{class}. The flag manifold
$G/B$ then parametrizes complete flags of subspaces 
\[
0\subsetneq E_1\subsetneq E_2\subsetneq \cdots\subsetneq E_n=\C^n
\]
in $\C^n$. The maximal torus $T\subset B$ is the group of invertible
diagonal matrices, and the characters of $T$ are the maps
\[ 
\text{diag}(t_1,\ldots,t_n) \mapsto t_1^{\al_1}\cdots t_n^{\al_n},
\]
where $\al_1,\ldots,\al_n$ are integers. In this way, the multiplicative
group of characters $\wh{T}$ (and $\wh{B}$) is identified with the
additive group $\Z^n$, and we have 
$$S(\wh{T})=S(\wh{B})=\Z[\x_1,\ldots,\x_n].$$ 
The Weyl group $W$ is the symmetric group $S_n$, and 
(\ref{Bor1}) implies that $\HH^*(G/B,\Z)$ is isomorphic to
$\Z[\x_1,\ldots,\x_n]/\I$, where $\I$ is the ideal generated by the 
non-constant elementary symmetric polynomials in $\x_1,\ldots,\x_n$.

In the case of the Grassmannian $\G(m,n)=\GL_n(\C)/P$, the associated
parabolic subgroup of $W=S_n$ is $W_P=S_m\times S_{n-m}$, embedded in
$S_n$ in the obvious way. The $W_P$-coset representatives of minimal
length are the permutations $\om\in S_n$ such that $\om_1<\cdots < \om_m$
and $\om_{m+1}<\cdots < \om_n$. If $\la=(\la_1,\ldots,\la_m)$ is a
partition which indexes a Schubert variety $X_\la$ in $\G(m,n)$, then
the minimal length coset representative $\om$ corresponding to
$\la$ is determined by the equations 
\[
\om_j = \la_{m+1-j}+j, \ \ \text{for $1\leq j \leq m$}.
\]
The ring presentation (\ref{Bor2}) assumes the form
\[
\HH^*(\G(m,n),\Z) \cong \left(\Z[\x_1,\ldots,\x_m]^{S_m}\otimes
 \Z[\x_{m+1},\ldots,\x_n]^{S_{n-m}}\right)/\I
\]
and the two groups of variables $\x_1,\ldots,\x_m$ and 
$\x_{m+1},\ldots,\x_n$ are the Chern roots of the vector bundles
$E'$ and $E''$ in the universal exact sequence (\ref{ses}), respectively.
The relations generating the ideal $\I$ translate into the Whitney sum formula
\[
c(E')c(E'') = c(E)= 1.
\]
\end{example}

Although the presentation (\ref{Bor2}) is very natural from a
Lie-theoretic point of view, in general there will be more than one
way to identify special Schubert classes which generate the cohomology
ring of $G/P$ among the $W_P$-invariants in $\Sh$. However, when $G$
is a {\em classical group}, there is a good uniform choice of special
Schubert class generators of $\HH^*(G/P)$, as explained in the
introduction.  Initially, the special Schubert classes on any
Grassmannian are defined exactly as in \S \ref{class} and \S
\ref{isogiam}. If $P$ is arbitrary, then the partial flag variety
$G/P$ admits projection maps to various Grassmannians $G/P_r$, defined
by omitting all but one of the subspaces in the flags parametrized by
$G/P$. The special Schubert classes on $G/P$ are defined to be the
pullbacks of the special Schubert classes from the Grassmannians
$G/P_r$. The {\em Giambelli problem} then asks for an explicit
combinatorial formula which writes a general Schubert class in
$\HH^*(G/P)$ as a polynomial in these special classes.

\subsection{Equivariant cohomology and degeneracy loci}
\label{ecdl}

The definition of equivariant cohomology begins with the construction
of a contractible space $EG$ on which $G$ acts freely. If $BG=EG/G$
denotes the quotient space, which is a classifying space for the group
$G$, then the map $EG\to BG$ is a universal principal $G$-bundle. In
the topological category, this means that if ${\mathcal U}\to M$ is
any principal $G$-bundle, then there is a morphism $f:M\to BG$, which
is unique up to homotopy, such that ${\mathcal U}\cong f^*EG$. If $X$
is any topological space endowed with a $G$-action, then $G$ acts
diagonally on $EG\times X$ (that is, $g(e,x):= (ge,gx)$) and the
quotient $$EG\times^GX:=(EG\times X)/G$$ exists. The $G$-equivariant
cohomology ring $\HH^*_G(X)$ of $X$ is then defined by
\[
\HH^*_G(X):=\HH^*(EG\times^GX),
\]
where we usually take cohomology with $\Q$-coefficients. Note that any
subgroup of $G$ also acts freely on $EG$, therefore we can similarly
define classifying spaces $BB:=EG/B$, $BT:=EG/T$, etc., and
equivariant cohomology rings $\HH^*_B(X)$, $\HH^*_T(X)$, respectively.
The natural map $EG\times^T X\to EG\times^B X$ induces an isomorphism
\begin{equation}
\label{btiso}
\HH^*_T(X)\cong \HH^*_B(X).
\end{equation} 
For more on equivariant cohomology, see for example \cite{Hs, Br2}.

A {\em flag bundle} is a fibration with fibers isomorphic to the flag
variety $X=G/B$.  Motivated by the theory of degeneracy loci of vector
bundles, Fulton, Pragacz, and Ratajski \cite{Fu2, Fu3, PR2}
studied the question of obtaining explicit Chern class formulas for
the classes of universal Schubert varieties in flag bundles for the
classical Lie groups. In effect, this is the Giambelli problem of \S
\ref{giamprob} when the homogeneous space $X$ varies in a family. We
give a description of the problem here assuming that we are in Lie
type A, B, or C for simplicity. Suppose that $E\to M$ is a vector
bundle over a variety $M$, which in type B or C comes equipped with a
nondegenerate symmetric or skew-symmetric bilinear form $E\otimes
E\to \C$, respectively. Assume that $E_\bull$ and $F_\bull$ are two
complete flags of subbundles of $E$, taken to be isotropic in types B
and C. For any $w$ in the Weyl group, we have the degeneracy locus
\begin{equation}
\label{degdef}
\X_w:=\{b\in M\ |\ \dim(E_r(b)\cap F_s(b)) \geq d_w(r,s) \ \, \forall \, r,s\},
\end{equation}
where $d_w(r,s)$ is a function taking values in the nonnegative
integers. The inequalities in (\ref{degdef}) are exactly those which
define the Schubert variety $X_w(F_\bull)$ in the flag variety $X$
(they are given explicitly in \S \ref{dloc}). Assuming that $M$
is smooth and that the locus $\X_w$ has pure codimension $\ell(w)$ in
$M$ (hypotheses which can both be relaxed), one seeks a formula for
the cohomology class $[\X_w]\in \HH^*(M)$ in terms of the Chern
classes of the vector bundles which appear in (\ref{degdef}).

Graham \cite{Gra} studied the above problem by placing it in a more
general Lie-theoretic framework, as follows. The morphism $BB\to BG$
is a flag bundle, and he showed that the fiber product space
$BB\times_{BG}BB$ is a classifying space for the question posed by
Fulton et.\ al., because any other example will pull back from
it. Therefore, all the desired formulas for $[\X_w]$ occur in
$\HH^*(BB\times_{BG}BB)$.  Furthermore, it is known (see for example
\cite[\S 1]{Br2}) that $\HH^*(BB)=\Sh$, $\HH^*(BG)=\Sh^W$, and the
Leray-Hirsch theorem implies that there is a natural isomorphism
\begin{equation}
\label{natiso}
\HH^*(BB\times_{BG}BB) \cong \Sh\otimes_{\Sh^W}\Sh.
\end{equation}
This point of view on degeneracy loci and the polynomials which 
represent their cohomology classes will be used in \S \ref{ddgeom}, and
we refer there for more details.

Another key observation in \cite{Gra} is that the degeneracy locus
problem is equivalent to the question of obtaining an {\em equivariant
  Giambelli formula} in $\HH^*_T(G/B)$. Indeed, there is a $(B\times
B)$-equivariant isomorphism $$EG\times G \to EG\times_{BG}EG$$ sending
$(e,g)$ to $(e,ge)$, and passing to the quotient spaces gives a
natural isomorphism
\begin{equation}
\label{BMiso}
EG\times^B (G/B) \cong BB\times_{BG}BB
\end{equation}
which maps $EG\times^B X_w$ to $\X_w$. In view of the isomorphism
(\ref{BMiso}), we will call $BB\times_{BG}BB$ the {\em Borel mixing
  space} associated to $G/B$.  Taking the cohomology of both sides of
(\ref{BMiso}) and using (\ref{btiso}), we obtain an isomorphism
\[
\HH^*_T(G/B) = \HH^*(EG\times^B (G/B)) \cong \HH^*(BB\times_{BG}BB)
\]
which sends the {\em equivariant Schubert class}
$[X_w]^T:=[EG\times^BX_w]$ to $[\X_w]$.  
Finally, we remark that an equivariant Giambelli formula for $[X_w]^T$
in $\HH^*_T(G/B)$ specializes to a Giambelli formula for $[X_w]$ in
$\HH^*(G/B)$ when we set all the variables coming from the linear 
$T$-action equal to zero.

\begin{remark}
The interpretation of the Giambelli problem for the cohomology of
$G/P$ given here is our own; compare with the discussion in
\cite[Chp.\ III \S 3]{Hi}. We note that there are other kinds of
`Giambelli formulas' which appear in the literature. The subject of
degeneracy loci also has a long history (see for example \cite{FP})
and includes varieties such as those discussed in the introduction,
defined by imposing rank conditions on generic {\em morphisms} between
vector bundles. The degeneracy loci considered here, which are defined
by intersecting subbundles of a fixed vector bundle as in
(\ref{degdef}), were introduced and studied in \cite{Fu3, PR2}.
\end{remark}

\section{Weyl groups, Grassmannian elements, and transition trees}
\label{wgtts}

In this section we explain the combinatorial objects which enter into
the algebraic and geometric formulas of later sections. We begin with
a discussion of the Weyl groups for the classical root systems of type
A, B, C, and D.

\subsection{Weyl groups}
Let $W_n$ denote the {\em hyperoctahedral group} of signed
permutations on the set $\{1,\ldots,n\}$, which is the semidirect
product $S_n \ltimes \Z_2^n$ of the symmetric group $S_n$ with
$\Z_2^n$. We adopt the notation where a bar is written over an entry
with a negative sign; thus $w=(3,\ov{1},2)$ maps $(1,2,3)$ to
$(3,-1,2)$. The group $W_n$ is the Weyl group for the root system
$\text{B}_n$ or $\text{C}_n$, and is generated by the simple
transpositions $s_i=(i,i+1)$ for $1\leq i \leq n-1$ and the sign
change $s_0(1)=\ov{1}$. The {\em symmetric group} $S_n$ is the
subgroup of $W_n$ generated by the $s_i$ for $1\leq i \leq n-1$, and
is the Weyl group for the root system $\text{A}_{n-1}$. The 
Weyl group $\wt{W}_n$ for the root system $\text{D}_n$ is the
subgroup of $W_n$ consisting of all signed permutations with an even
number of sign changes.  The group $\wt{W}_n$ is an extension of $S_n$
by the element $s_\Box=s_0s_1s_0$, which acts on the right by
\[
(w_1,w_2,\ldots,w_n)s_\Box=(\ov{w}_2,\ov{w}_1,w_3,\ldots,w_n).
\]
There are natural embeddings $W_n\hookrightarrow W_{n+1}$ and 
$\wt{W}_n\hookrightarrow \wt{W}_{n+1}$ defined by
adjoining the fixed point $n+1$. We let $S_\infty := \cup_nS_n$,  
$W_\infty :=\cup_n W_n$, and $\wt{W}_\infty := \cup_n \wt{W}_n$.

Consider the two sets $\N_0 :=\{0,1,\ldots\}$ and $\N_\Box
:=\{\Box,1,\ldots\}$ whose members index the simple reflections. A
{\em reduced word} of an element $w$ in $W_\infty$ (respectively
$\wt{W}_\infty$) is a sequence $a_1\cdots a_\ell$ of elements of
$\N_0$ (respectively $\N_\Box$) such that $w=s_{a_1}\cdots s_{a_\ell}$
and $\ell$ is minimal, so (by definition) equal to the length
$\ell(w)$ of $w$. We say that $w$ has {\em descent} at position $r$ if
$\ell(ws_r)<\ell(w)$, where $s_r$ is the simple reflection indexed by
$r$. For $r\in \N_0$, this is equivalent to the condition
$w_r>w_{r+1}$, where we set $w_0=0$ (so $w$ has a descent at position
$0$ if and only if $w_1<0$).

\subsection{Grassmannian elements}

A permutation $\om\in S_\infty$ is {\em Grassmannian} if there exists
an $m\geq 1$ such that $\om_i<\om_{i+1}$ for all $i\neq m$.  The {\em
  shape} of such a Grassmannian permutation $\om$ is the partition
$\la=(\la_1,\ldots, \la_m)$ with $\la_{m+1-j}=\om_j-j$ for $1\leq j
\leq m$. Notice that there are infinitely many permutations of a given
shape $\la$. However, for each fixed $m$ and $n>m$, we obtain a
bijection between the set of permutations in $S_n$ with at most one
descent at position $m$ and the set of partitions $\la$ whose diagram
fits inside an $m \times (n-m)$ rectangle.

Fix a nonnegative integer $k$. An element $w=(w_1,w_2,\ldots)$ in
$W_\infty$ is called {\em $k$-Grassmannian} if and only if we have
$\ell(ws_i)=\ell(w)+1$ for all $i\neq k$.  When $k=0$, this says that
$w$ is increasing: $w_1<w_2<\cdots$, while when $k>0$, then $w$ is 
$k$-Grassmannian if and only if
\[
0< w_1 < \cdots < w_k \quad \text{and} \quad w_{k+1}<w_{k+2}<\cdots .
\]

There is an explicit bijection between $k$-Grassmannian elements $w$ of
$W_\infty$ and $k$-strict partitions $\la$, under which the elements in
$W_n$ correspond to those partitions whose diagram fits inside
an $(n-k)\times (n+k)$ rectangle. This bijection is obtained as
follows.  The absolute value of the negative entries in $w$ form a
(possibly empty) strict partition $\mu$ whose diagram gives the part
of the diagram of $\la$ which lies in columns $k+1$ and higher. The
boxes in these columns which lie outside of the diagram of $\mu$ are
partitioned into south-west to north-east {\em diagonals}. For each
$i$ between $1$ and $k$, let $d_i$ be the diagonal among these which
contains $w_i$ boxes. Then the bottom box $B_i$ of $\la$ in column
$k+1-i$ is {\em $k$-related} to $d_i$, for $1\leq i \leq k$.  Here
`$k$-related' means that the directed line segment joining $B_i$ with
the lowest box of the diagonal $d_i$ is north-west to south-east (this
notion was used in \cite{BKT1, BKT2}). We will denote the Weyl group
element associated to $\la$ by $w_\la$.

\begin{example}
(a) The $3$-Grassmannian element $w = (3,5,8,\ov{4},\ov{1},2,6,7)$
  corresponds to the $3$-strict partition $\la= (7,4,3,1,1)$.
\[
\lambda \ = \ \ \raisebox{-53pt}{\pic{.6}{ladiag}}
\]
\noin 
(b) A permutation $\om\in S_\infty$ with a unique descent at
position $m$ may be viewed as an $m$-Grassmannian element of
$W_\infty$, via the natural inclusion of $S_\infty$ in $W_\infty$. In
this case, the $m$-strict partition associated to $\om$ is the
conjugate of the shape of $\om$ as a Grassmannian permutation.
\end{example}

Fix a $k\in \N_\Box$ with $k\neq 1$. An element $w\in\wt{W}_\infty$ is
called $k$-Grassmannian if $\ell(ws_i)=\ell(w)+1$ for all $i\neq
k$. We say that $w$ is $1$-Grassmannian if $\ell(ws_i)=\ell(w)+1$ for
all $i\geq 2$.  The $\Box$-Grassmannian elements are therefore the
ones whose values strictly increase, while if $k>0$, then $w$ is
$k$-Grassmannian if and only if
\[
|w_1| < w_2 <\cdots < w_k \quad \text{and} \quad w_{k+1}<w_{k+2}<\cdots .
\]

There is a bijection between the $k$-Grassmannian elements $w$ of
$\wt{W}_\infty$ and typed $k$-strict partitions $\la$, under which the
elements in $\wt{W}_n$ correspond to typed partitions whose
diagram fits inside an $(n-k)\times (n+k-1)$ rectangle, obtained as
follows. Subtract one from the absolute value of each negative entry
$w_{k+j}$, for $j\geq 1$, to obtain the parts of a strict partition
$\mu$. As above, the partition $\mu$ gives the part of $\la$ which
lies in columns $k+1$ and higher, and the boxes in these columns
outside of $\mu$ are partitioned into south-west to north-east
diagonals. Choose $d_1,\ldots, d_k$ among these diagonals such that
the number of boxes in $d_i$ is given by $|w_i|-1$, for $1\leq i \leq
k$. The bottom boxes of $\la$ in the first $k$ columns are then {\em
  $k'$-related} to these $k$ diagonals, with bottom box $B_i$ in
column $k+1-i$ related to diagonal $d_i$.  Here `$k'$-related' differs
from `$k$-related' by a shift up by one unit, as shown in the next
figure. Finally, we have $\type(\la)>0$ if and only if $|w_1|>1$, and
in this case $\type(\la)$ is equal to $1$ or $2$ depending on whether
$w_1>0$ or $w_1<0$, respectively. We will denote the Weyl group
element associated to $\la$ by $w_\la$, as above.

\begin{example}
The $3$-Grassmannian element $w =
(\ov{2},6,7,\ov{5},\ov{3},\ov{1},4,8)$ corresponds to the $3$-strict
partition $\la= (7,5,3,2)$ of type $2$.
\[
\lambda \ = \ \ \raisebox{-53pt}{\pic{.6}{ladiagD}}
\]
\end{example}

To each $k$-Grassmannian element $w$ in $W_\infty$ or $\wt{W}_\infty$,
we attach a finite order ideal $\cC(w)$ in $\Delta^\circ$ by the
prescription
\begin{equation}
\label{Cweq}
\cC(w) := \{ (i,j)\in \Delta^\circ \ |\ w_{k+i}+ w_{k+j} < 0 \}.
\end{equation}
If $\la$ is the $k$-strict partition or typed $k$-strict partition
corresponding to $w$, then $w=w_\la$ and we say that $\la$ is the {\em
  shape} of $w$. In either case, it is easy to check that $\cC(w)$
is equal to the ideal $\cC(\la)$ or $\cC'(\la)$ defined in \S
\ref{isogiam}, respectively.

\subsection{Transition trees and Stanley coefficients}
\label{tts}

The notion of a transition was introduced by Lascoux and
Sch\"utzenberger in \cite{LS1, LS3}, where it was applied to achieve
efficient, recursive computations of type A Schubert polynomials and a
new form of the Littlewood-Richardson rule. Analogues of their
transition equations for the other classical Lie types were obtained
by Billey \cite{Bi1}, using the Schubert polynomials found in
\cite{BH}. We will require the extension of this theory given in
\cite{T6}, which includes the $k$-Grassmannian elements for all $k>0$.
For positive integers $i<j$ we define reflections $t_{ij}\in S_\infty$
and $\ov{t}_{ij},\ov{t}_{ii} \in W_\infty$ by their right actions
\begin{align*}
(\ldots,w_i,\ldots,w_j,\ldots)\,t_{ij} &= 
(\ldots,w_j,\ldots,w_i,\ldots), \\
(\ldots,w_i,\ldots,w_j,\ldots)\,\ov{t}_{ij} &= 
(\ldots,\ov{w}_j,\ldots,\ov{w}_i,\ldots), \ \ \mathrm{and} \\
(\ldots,w_i,\ldots)\,\ov{t}_{ii} &= 
(\ldots,\ov{w}_i,\ldots),
\end{align*} 
and let $\ov{t}_{ji} := \ov{t}_{ij}$.

Following Lascoux and Sch\"utzenberger \cite{LS3}, for any permutation
$\om\in S_\infty$, we construct a rooted tree $T(\om)$ with nodes
given by permutations of the same length $\ell(\om)$, and root $\om$,
as follows. If $\om=1$ or $\om$ is Grassmannian, then set
$T(\om):=\{\om\}$. Otherwise, let $r$ be the largest descent of $\om$,
and set $s := \max(i>r\ |\ \om_i < \om_r)$. The definitions of $r$ and
$s$ imply that we have $\ell(\om t_{rs}) =\ell(\om)-1$. Let
$$I(\om):=\{i \ |\ 1\leq i < r \ \ \mathrm{and} \ \ \ell(\om
t_{rs}t_{ir}) = \ell(\om) \}.$$ If $I(\om)\neq \emptyset$, then let
$\Psi(\om):=\{ \om t_{rs}t_{ir}\ |\ i\in I(\om)\}$; otherwise, let
$\Psi(\om):=\Psi(1\times \om)$.  To recursively define $T(\om)$, we
join $\om$ by an edge to each $v\in \Psi(\om)$, and attach to each
$v\in \Psi(\om)$ its tree $T(v)$. One can show that $T(\om)$ is a
finite tree whose leaves are all Grassmannian permutations. The tree
$T(\om)$ is the {\em Lascoux-Sch\"utzenberger transition tree} of
$\om$.\footnote{The trees in \cite{LS3} differ slightly from
the ones here; their leaves are vexillary permutations.} We
define the {\em Stanley coefficient} $c^\om_\la$ to be the number of
leaves of shape $\la$ in the transition tree $T(\om)$ associated to
$\om$.

\begin{example}
\label{treeA}
The transition tree of $\om=143265$ is displayed below. At all 
nodes where the branching rule is applied, the positions $r$ and $s$ 
are shown in boldface.
\medskip
\[
\includegraphics[scale=0.31]{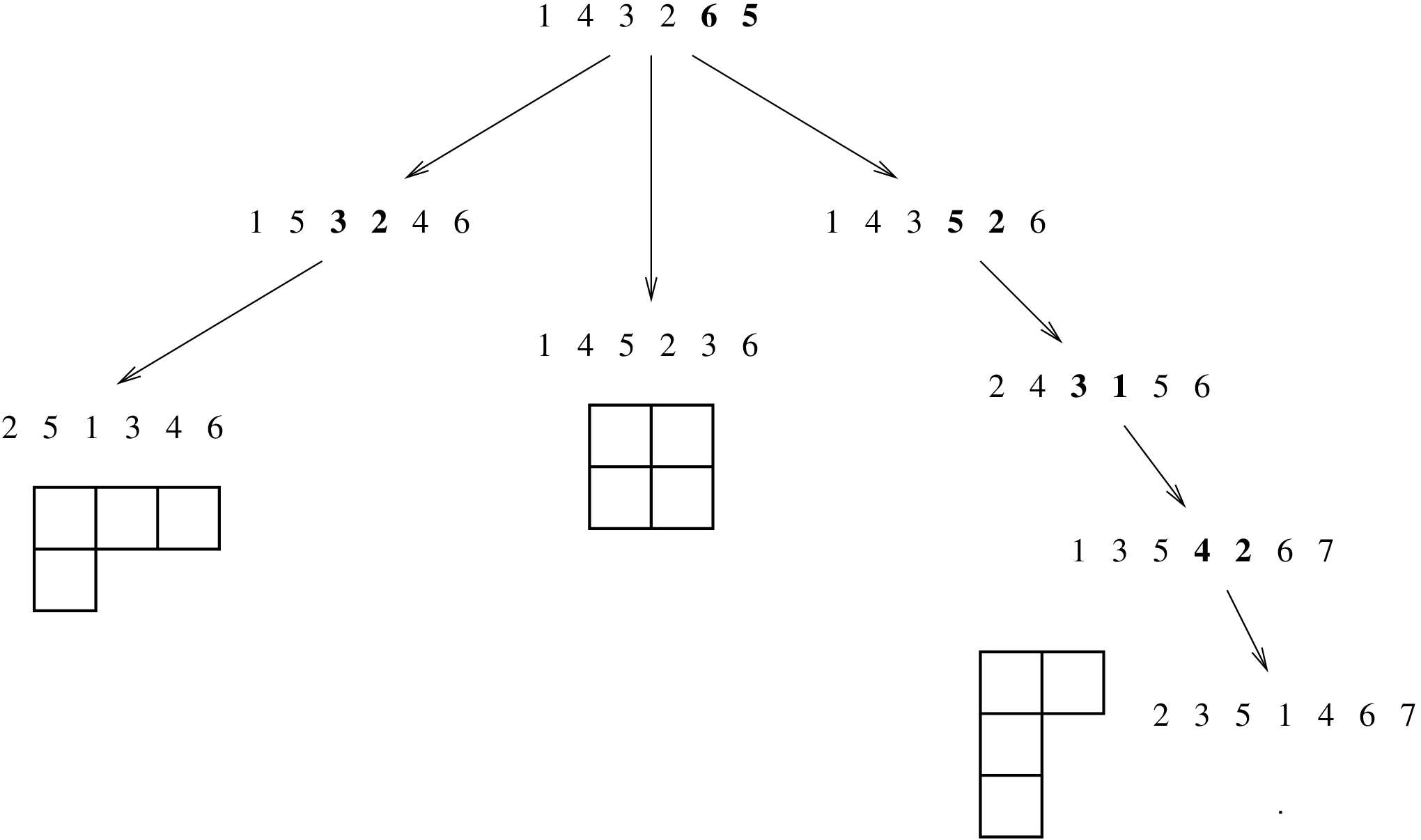}
\]
\end{example}

There exist alternative combinatorial formulas for the coefficients
$c_\la^\om$ in terms of Young tableaux \cite{EG, FG, RS}; see also
\cite{Li}. We note here the result of Fomin and Greene \cite{FG} that
$c_\la^\om$ is equal to the number of semistandard tableaux $T$ of
shape $\wt{\la}$ such that the {\em column word} of $T$, obtained by
reading the entries of $T$ from bottom to top and left to right, is a
reduced word for $w$. We display below the three tableaux associated
to the leaves of the tree for $143265=s_3s_2s_3s_5$ in Example
\ref{treeA}.
\smallskip
\[
\includegraphics[scale=0.45]{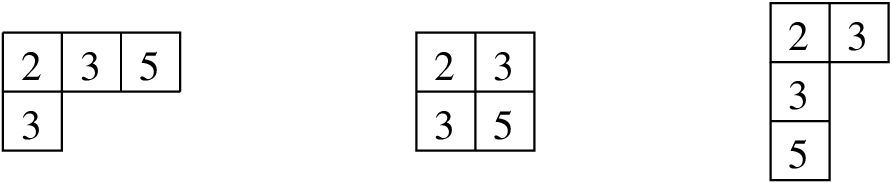}
\]

A signed permutation $w\in W_\infty$ is said to be {\em increasing up
to $k$} if it has no descents less than $k$.  This condition is
automatically satisfied if $k=0$, and for positive $k$ it means that
$0 < w_1 < w_2 < \cdots < w_k$. If $k\geq 2$, we say that an element
$w\in \wt{W}_\infty$ is {\em increasing up to $k$} if it has no
descents less than $k$; this means that $|w_1|< w_2<\cdots < w_k$. By
convention we agree that every element of $\wt{W}_\infty$ is
increasing up to $\Box$ and also increasing up to $1$.

For any $w\in W_\infty$ which is
increasing up to $k$, we construct a rooted tree $T^k(w)$ with nodes
given by elements of $W_\infty$ and root $w$ as follows. Let $r$ be
the largest descent of $w$. If $w=1$ or $r=k$, then set
$T^k(w):=\{w\}$. Otherwise, let $s := \max(i>r\ |\ w_i < w_r)$ and
$\Phi(w) := \Phi_1(w)\cup \Phi_2(w)$, where
\begin{gather*}
\Phi_1(w) := \{wt_{rs}t_{ir}\ |\ 1\leq i < r \ \ \mathrm{and} \ \ 
\ell(wt_{rs}t_{ir}) = \ell(w) \}, \\
\Phi_2(w) := 
\{wt_{rs}\ov{t}_{ir}\ |\ i\geq 1 \ \ \mathrm{and} \ \
\ell(wt_{rs}\ov{t}_{ir}) = \ell(w) \}.
\end{gather*}
To recursively define $T^k(w)$, we join $w$ by an edge to each $v\in
\Phi(w)$, and attach to each $v\in \Phi(w)$ its tree $T^k(v)$.  We
call $T^k(w)$ the {\em $k$-transition tree} of $w$. 

\begin{example}
\label{treeC}
The $1$-transition tree of $w=3\ov{1}2645$ is displayed below, with 
positions $r$ and $s$ shown in boldface.
\medskip
\[
\includegraphics[scale=0.31]{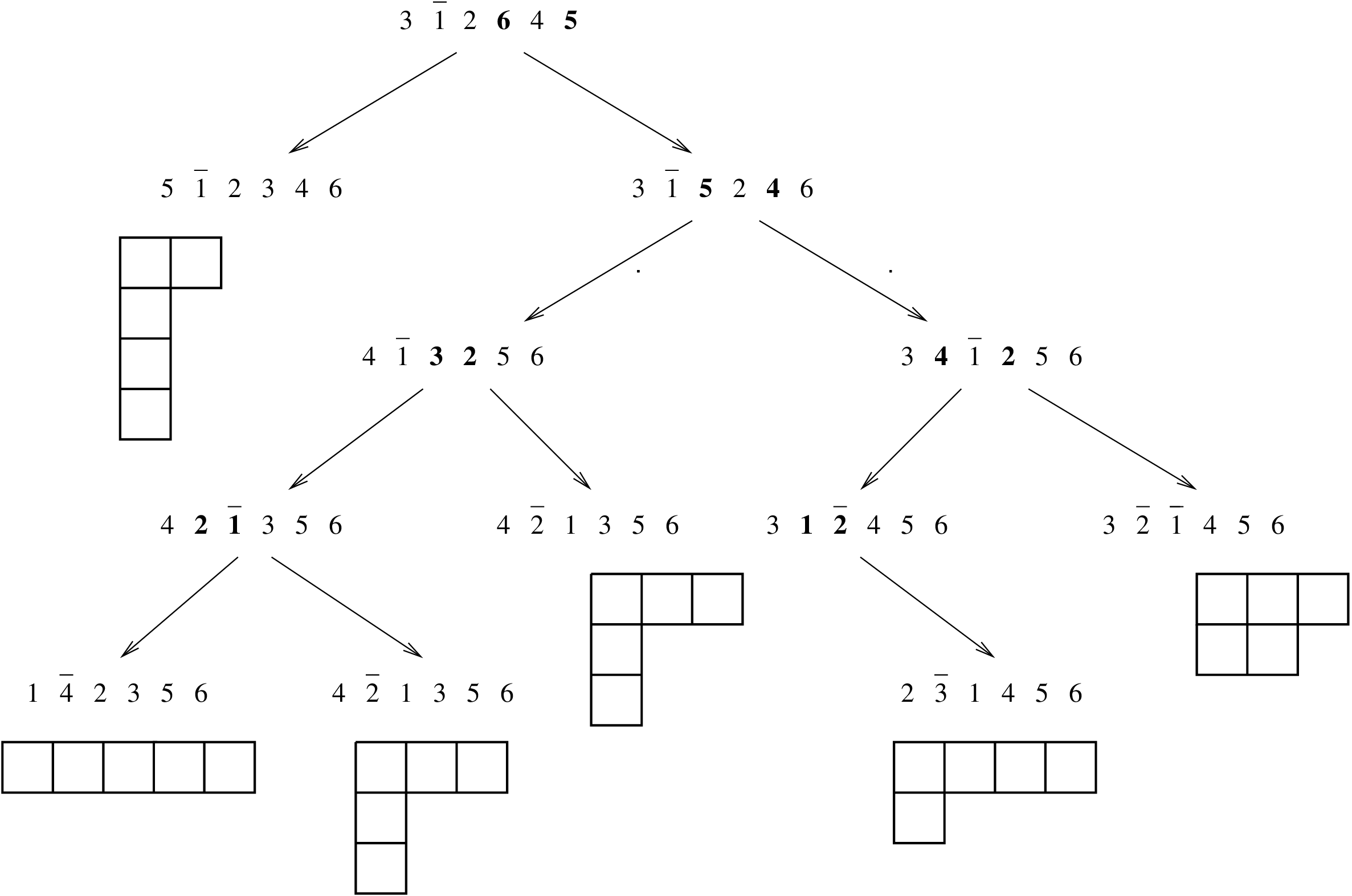}
\]
\end{example}

For any $w\in \wt{W}_\infty$ which is increasing up to $k$, we
construct the $k$-transition tree $\wt{T}^k(w)$ with nodes given by
elements of $\wt{W}_\infty$ and root $w$ in a manner parallel to the
above. Let $r$ be the largest descent of $w$. If $w=1$, or $k\neq 1$
and $r=k$, or $k=1$ and $r\in \{\Box,1\}$, then set $\wt{T}^k(w)
:=\{w\}$. Otherwise, let $s := \max(i>r\ |\ w_i < w_r)$ and
$\wt{\Phi}(w) := \wt{\Phi}_1(w)\cup \wt{\Phi}_2(w)$, where
\begin{gather*}
\wt{\Phi}_1(w) := \{wt_{rs}t_{ir}\ |\ 1\leq i < r \ \ \mathrm{and} \ \ 
\ell(wt_{rs}t_{ir}) = \ell(w) \}, \\
\wt{\Phi}_2(w) := 
\{wt_{rs}\ov{t}_{ir}\ |\ i\neq r \ \ \mathrm{and} \ \
\ell(wt_{rs}\ov{t}_{ir}) = \ell(w) \}.
\end{gather*}
To define $\wt{T}^k(w)$, we join $w$ by an edge to each $v\in
\wt{\Phi}(w)$, and attach to each $v\in \wt{\Phi}(w)$ its tree
$\wt{T}^k(v)$. 

According to \cite[Lemma 3 and \S 6]{T6}, the $k$-transition trees
$T^k(w)$ and $\wt{T}^k(w)$ are finite trees, all of whose nodes are
increasing up to $k$. Moreover, the leaves of these trees are
$k$-Grassmannian elements.  For any $k$-strict (respectively, typed
$k$-strict) partition $\la$, the {\em mixed Stanley coefficient}
$e^w_{\la}$ (respectively, $d^w_{\la}$) is defined to be the number of
leaves of $T^k(w)$ (respectively, $\wt{T}^k(w)$) of shape $\la$. For
instance, from Example \ref{treeC} we deduce that
\[
e^{3\ov{1}2645}_{(4,1)} = e^{3\ov{1}2645}_{(3,2)} = 1 \quad \mathrm{and} \quad
e^{3\ov{1}2645}_{(3,1,1)}=2.
\]

\begin{example}
When $k=0$, there exist alternative combinatorial formulas for the
integers $e^w_\la$ and $d^w_{\la}$ involving certain tableaux \cite{H,
  Kr, L1, L2}. For instance, choose any $w$ in $W_\infty$ and let
$\la$ be a Young diagram with $r$ rows such that $|\la| = \ell(w)$. We
call a sequence $b=(b_1,\ldots,b_m)$ {\em unimodal} if for some index
$j$ we have
\[
b_1 > b_2 > \cdots > b_{j-1} > b_j < b_{j+1} < \cdots < b_m .
\]
A {\em subsequence} of $b$ is any sequence $(b_{i_1}, \dots, b_{i_p})$
with $1 \leq i_1 < \dots < i_p \leq m$. A {\em Kra\'skiewicz tableau}
(also known as a {\em standard decomposition tableau}) for $w$ of
shape $\la$ is a filling $T$ of the boxes of $\lambda$ with
nonnegative integers in such a way that (i) if $t_i$ is the sequence
of entries in the $i$-th row of $T$, reading from left to right, then
the row word $t_r\ldots t_1$ is a reduced word for $w$, and (ii) for
each $i$, $t_i$ is a unimodal subsequence of maximum length in $t_r
\ldots t_{i+1} t_i$. Lam \cite{L2} proved that $e^w_{\la}$ is equal to
the number of Kra\'skiewicz tableaux for $w$ of shape $\la$.
\end{example}

\section{Schubert polynomials and symmetric functions}
\label{ssss}

\subsection{The nilCoxeter algebra and Schubert polynomials}
\label{nilSch}

Bernstein-Gelfand-Gelfand \cite{BGG} and Demazure \cite{D1, D2} used
{\em divided difference operators} to construct an algorithm that
produces Giambelli polynomials which represent the Schubert classes on
the flag variety $G/B$ in the Borel presentation (\ref{Bor1}) of the
cohomology ring, starting from the choice of a representative for the
class of a point. For the general linear group, Lascoux and
Sch\"utzenberger \cite{LS1, LS3} applied this algorithm with a very
natural choice of top degree polynomial to define {\em Schubert
  polynomials}, a theory which included both single and double
versions.

In a series of papers \cite{FS, FK1, FK2, L1}, Fomin, Stanley,
Kirillov, and Lam developed an alternative approach to the theory of
Schubert polynomials, which extended to all the classical types. The
main idea was to use the nilCoxeter algebra of the Weyl group, an
abstract algebra which is isomorphic to the algebra of divided
difference operators. The work was complicated by the fact that in Lie
types B, C, and D, there are many candidate theories of Schubert
polynomials (see \cite{BH, Fu2, LP1, LP2, T2, T3} for examples), and
it was not clear why any one of them should be preferred from the
others. However, the theory that was best understood from a
combinatorial point of view was that of Billey and Haiman \cite{BH},
which incorporated both the Lascoux-Sch\"utzenberger type A Schubert
polynomials and the classical Schur $Q$-functions. Double versions of the
Billey-Haiman polynomials were recently introduced and studied by
Ikeda, Mihalcea, and Naruse \cite{IMN}. Their type B, C, and D double
Schubert polynomials will be key ingredients in our story, and we
discuss their construction using the nilCoxeter algebra below.

The {\em nilCoxeter algebra} $\cW_n$ of the hyperoctahedral group $W_n$ 
is the free associative algebra with unity
generated by the elements $u_0,u_1,\ldots,u_{n-1}$ modulo the
relations
\[
\begin{array}{rclr}
u_i^2 & = & 0, & i\geq 0\ ; \\
u_iu_j & = & u_ju_i, & |i-j|\geq 2\ ; \\
u_iu_{i+1}u_i & = & u_{i+1}u_iu_{i+1}, & i>0\ ; \\
u_0u_1u_0u_1 & = & u_1u_0u_1u_0.
\end{array}
\]
For any $w\in W_n$, choose a reduced word $a_1\cdots a_\ell$ for $w$
and define $u_w := u_{a_1}\ldots u_{a_\ell}$. Since the last three
relations listed are the Coxeter relations for the Weyl group $W_n$,
it follows that $u_w$ is well defined. Moreover, the $u_w$ for $w\in
W_n$ form a free $\Z$-basis of $\cW_n$. We denote the coefficient of
$u_w\in \cW_n$ in the expansion of the element $h\in \cW_n$ by
$\langle h,w\rangle$; thus $h = \sum_{w\in W_n}\langle
h,w\rangle\,u_w$ for all $h\in \cW_n$.

Let $t$ be an indeterminate and define
\begin{gather*}
A_i(t) := (1+t u_{n-1})(1+t u_{n-2})\cdots 
(1+t u_i) \ ; \\
\tilde{A}_i(t) := (1-t u_i)(1-t u_{i+1})\cdots (1-t u_{n-1}) \ ; \\
C(t) := (1+t u_{n-1})\cdots(1+t u_1)(1+tu_0)
(1+t u_0)(1+t u_1)\cdots (1+t u_{n-1}).
\end{gather*}
Suppose that $X=(x_1,x_2,\ldots)$, 
$Y=(y_1,y_2,\ldots)$, and $Z=(z_1,z_2,\ldots)$ are three
infinite sequences of commuting independent variables. 
Let $$C(X):=C(x_1)C(x_2)\cdots$$ and for $w\in W_n$, define
\begin{equation}
\label{dbleC}
\CS_w(X\,;Y,Z) := \left\langle 
\tilde{A}_{n-1}(z_{n-1})\cdots \tilde{A}_1(z_1)C(X) A_1(y_1)\cdots 
A_{n-1}(y_{n-1}), w\right\rangle.
\end{equation}
Set $\CS_w(X\,;Y):=\CS_w(X\,;Y,0)$. The polynomials $\CS_w(X\,;Y)$ are
the type C Schubert polynomials of Billey and Haiman and the
$\CS_w(X\,;Y,Z)$ are their double versions due to Ikeda, Mihalcea, and
Naruse.  Note that $\CS_w$ is really a polynomial in the $Y$ and $Z$
variables, with coefficients which are formal power series in $X$. In
fact, these formal power series are symmetric in the $X$ variables;
this follows immediately from Fomin and Kirillov's result
\cite[Prop.\ 4.2]{FK2} that $C(s)C(t) = C(t)C(s)$, for any two
commuting variables $s$ and $t$. We set 
$$F_w(X):=\CS_w(X\,;0,0) = \left\langle C(X), w\right\rangle$$ and
call $F_w$ the {\em type C Stanley symmetric function} indexed by
$w\in W_n$.  For any $\om \in S_n$, the double Schubert polynomial
$\AS_\om(Y,Z)$ of Lascoux and Sch\"utzenberger is given by
\[
\AS_\om(Y,Z) := \CS_\om(0\,; Y,Z)=
\left\langle 
\tilde{A}_{n-1}(z_{n-1})\cdots \tilde{A}_1(z_1)
A_1(y_1)\cdots A_{n-1}(y_{n-1}), \om\right\rangle.
\]
The polynomial $\AS_\om(Y):=\AS_\om(Y,0)$ is the single Schubert
polynomial. 

It is transparent from the definition that $\CS_w$ is stable under the
natural inclusion of $W_n$ in $W_{n+1}$; it follows that $\AS_\om$ and
$\CS_w$ are well defined for $\om\in S_\infty$ and $w\in W_\infty$,
respectively. This {\em stability property} of Schubert polynomials is
special to the classical groups, and will be used in \S \ref{ddgeom}
in order to characterize them via their compatibility with divided
difference operators (Theorem \ref{uniq}).

Given any Weyl group elements $u_1,\ldots,u_p,w$, we will write
$u_1\cdots u_p=w$ if $\ell(u_1)+\cdots + \ell(u_p)=\ell(w)$ and the
product of $u_1,\ldots,u_p$ is equal to $w$. In this case we say that
$u_1\cdots u_p$ is a {\em reduced factorization} of $w$. Equation
(\ref{dbleC}) is equivalent to the relations
\begin{equation}
\label{dbleC2}
\CS_w(X\,;Y,Z) = \sum_{uv=w}\AS_{u^{-1}}(-Z)\CS_v(X\,;Y) =
\sum_{uvu'=w}\AS_{u^{-1}}(-Z)F_v(X)\AS_{u'}(Y)
\end{equation}
summed over all reduced factorizations $uv=w$ and $uvu'=w$
(respectively) with $u,u'\in S_\infty$ (compare with \cite[Cor.\
8.10]{IMN}). In particular, we have
\begin{equation}
\label{dbleA2}
\AS_\om(Y,Z) = \sum_{uu'=\om}\AS_{u^{-1}}(-Z)\AS_{u'}(Y)
\end{equation}
summed over all reduced factorizations $uu'=\om$ in $S_\infty$.

For the orthogonal types B and D we will work with coefficients in the
ring $\Z[\frac{1}{2}]$. For $w\in W_\infty$, let $s(w)$ denote the
number of $i$ such that $w_i<0$.  The type B double Schubert
polynomials $\BS_w$ are related to the type C polynomials by the
equation $\BS_w=2^{-s(w)}\CS_w$. We will
use the nilCoxeter algebra $\wt{\cW}_n$ of $\wt{W}_n$ to define type D
Schubert polynomials.  $\wt{\cW}_n$ is the free associative algebra
with unity generated by the elements $u_\Box,u_1,\ldots,u_{n-1}$
modulo the relations
\[
\begin{array}{rclr}
u_i^2 & = & 0, & i\in \N_\Box\ ; \\
u_\Box u_1 & = & u_1 u_\Box, \\
u_\Box u_2 u_\Box & = & u_2 u_\Box u_2, \\
u_iu_{i+1}u_i & = & u_{i+1}u_iu_{i+1}, & i>0\ ; \\
u_iu_j & = & u_ju_i, & j> i+1, \ \text{and} \ (i,j) \neq (\Box,2).
\end{array}
\]
For any $w\in \wt{W}_n$, choose a reduced word $a_1\cdots a_\ell$ for
$w$ and define $u_w := u_{a_1}\ldots u_{a_\ell}$.  We denote the
coefficient of $u_w\in \wt{\cW}_n$ in the expansion of the element
$h\in \wt{\cW}_n$ by $\langle h,w\rangle$.  Let $t$ be an
indeterminate and, following Lam \cite{L1}, define
\[
D(t) := (1+t u_{n-1})\cdots (1+t u_2)(1+t u_1)(1+t u_\Box)
(1+t u_2)\cdots (1+t u_{n-1}).
\]
Let $D(X)=D(x_1)D(x_2)\cdots$, and define
\[
\DS_w(X\,;Y,Z) := \left\langle 
\tilde{A}_{n-1}(z_{n-1})\cdots \tilde{A}_1(z_1) D(X) A_1(y_1)\cdots 
A_{n-1}(y_{n-1}), w\right\rangle.
\]
The polynomials $\DS_w(X\,;Y):=\DS_w(X\,;Y,0)$ are the type D
Billey-Haiman Schubert polynomials, and the $\DS_w(X\,;Y,Z)$ are their
double versions studied in \cite{IMN}.

\begin{example}
For $\om=321\in S_3$ we have that
\begin{align*}
\AS_{321}(Y,Z) &= \left\langle (1-z_2u_2)(1-z_1u_1)(1-z_1u_2)
(1+y_1u_2)(1+y_1u_1)(1+y_2u_2), 321 \right\rangle \\ &=
-z_2z_1^2+z_2z_1y_1+z_2z_1y_2-z_2y_1y_2
+z_1^2y_1-z_1y_1^2-z_1y_1y_2+y_1^2y_2 \\ 
&= (y_1-z_1)(y_1-z_2)(y_2-z_1).
\end{align*}
Let $\om_0=(n,n-1,\ldots,1)$ be the longest element of $S_n$.
Then one can show (see \cite[Cor.\ 4.4]{FS}) that 
\[
\AS_{\om_0}(Y,Z) = \prod_{i+j \leq n} (y_i-z_j).
\] 
There are more complicated formulas (see \cite[Thm.\ 1.2]{IMN}) for
the double Schubert polynomials $\CS_{w_0}$ and $\DS_{w_0}$, where
$w_0$ denotes the longest element in the respective Weyl group. We 
do not require these formulas in this article.
\end{example}

\subsection{Schur, theta, and eta polynomials}
\label{ste}

In this section, we use raising operators to construct the polynomials
from \cite{BKT2, BKT4} which will appear later in splitting formulas
for the double Schubert polynomials.

For $m,n\geq 1$, set $Y_{(m)}=(y_1,\ldots,y_m)$ and $Z_{(n)}=(z_1,\ldots,z_n)$.
Define the complete supersymmetric polynomials $h_r=h_r(Y_{(m)}/Z_{(n)})$ by 
their generating function
\[
 \prod_{i=1}^m (1-y_it)^{-1}\prod_{j=1}^n (1-z_jt) =
\sum_{r=0}^\infty h_r t^r.
\]
The {\em supersymmetric Schur polynomial} $s_\la(Y_{(m)}/Z_{(n)})$ is
obtained by setting 
\[
s_\la(Y_{(m)}/Z_{(n)}) := R^0\, h_\la = 
\det\left(h_{\la_i + j-i}(Y_{(m)}/Z_{(n)})\right)_{i,j}
\]
for any partition $\la$, where $h_\la:=\prod_ih_{\la_i}$. The usual
Schur polynomials satisfy the identities 
\[
s_\la(Y_{(m)}) = s_\la(Y_{(m)}/Z_{(n)})\vert_{Z_{(n)}=0}
\] 
and 
\[
s_\la(0/Z_{(n)}) = s_\la(Y_{(m)}/Z_{(n)})\vert_{Y_{(m)}=0} =
(-1)^{|\la|}s_{\wt{\la}}(Z_{(n)}).
\]

Fix an integer $k\geq 0$, and define formal power series
$\ti_r=\ti_r(X\,;Y_{(k)})$ for $r\in \Z$ by the equation
\[
\prod_{i=1}^{\infty}\frac{1+x_it}{1-x_it} \prod_{j=1}^k
(1+y_jt)= \sum_{r=0}^{\infty} \ti_r t^r.
\]
Set $\ti_\la:= \prod_i\ti_{\la_i}$ and  
\begin{equation}
\label{Tidefn}
\Ti_\la(X\,;Y_{(k)}) := R^\la\, \ti_\la
\end{equation}
for any $k$-strict partition $\la$, where $R^\la$ is the raising
operator in (\ref{ReqC}). Following \cite{BKT2}, we call
$\Ti_\la(X\,;Y_{(k)})$ a {\em theta polynomial}, although this is a
slight abuse of language (compare with Remark \ref{rmk1}).  Define
\begin{equation}
\label{etaequ}
\eta_k := \frac{1}{2}\ti_k + \frac{1}{2}e_k(Y_{(k)}) \ \ 
\mathrm{and} \ \  \eta'_k :=
\frac{1}{2}\ti_k - \frac{1}{2}e_k(Y_{(k)}) =
\frac{1}{2}\sum_{i=0}^{k-1}q_{k-i}(X)e_i(Y_{(k)}).
\end{equation}
For any typed $k$-strict partition $\la$, consider the {\em eta polynomial}
\begin{equation}
\label{Heq}
\Eta_\la(X\,;Y_{(k)}) := 2^{-\ell_k(\la)}\,R^\la \star \ti_\la. 
\end{equation}
The raising operator expression in (\ref{Heq}) is defined in the same
way as the analogous one in equation (\ref{Etapoly}), but using
$\ti_r$ and $\eta_k,\eta'_k$ in place of $c_r$ and $\tau_k, \tau'_k$,
respectively. 

When $k=0$, the indexing partitions $\la$ are strict, we have that
$\ti_r(X)=q_r(X)$ and $\Ti_\la(X\,;Y_{(0)})=Q_\la(X)$ are the
classical Schur $Q$-functions discussed in \S \ref{sgs}, and
$H_\la(X\,;Y_{(0)})=P_\la(X)$ is a Schur $P$-function. The ring
$\Gamma:=\Z[q_1,q_2,\ldots]$ is the ring of Schur $Q$-functions and
$\Gamma':= \Z[P_1,P_2,\ldots]$ is the ring of Schur $P$-functions (see
\cite{HH} and \cite[III.8]{M}).  For our purposes here we need to know
that $\CS_w(X\,;Y,Z)$ lies in $\Gamma[Y,Z]$ and $\DS_w(X\,;Y,Z)$ lies
in $\Gamma'[Y,Z]$. These assertions follow from Example
\ref{zerosplit} below; see also equation (\ref{dbleCeq}) for a quick
proof of the first one.

\begin{thm}[\cite{LS1, BKT2, BKT4}]
\label{xtoy}
{\em (a)} If $\om\in S_\infty$ is a Grassmannian permutation with a unique
descent at $m$ and $\la$ is the corresponding partition of length at 
most $m$, then we have
\begin{equation}
\label{StoS}
\AS_\om(Y)=s_\la(Y_{(m)}). 
\end{equation}

\smallskip
\noin
{\em (b)} If $w_\la\in W_\infty$ is the
$k$-Grassmannian element corresponding to the $k$-strict partition
$\la$, then we have
\begin{equation}
\label{CtoT}
\CS_{w_\la}(X\,;Y) = \Ti_\la(X\,;Y_{(k)}).
\end{equation}

\smallskip
\noin
{\em (c)} If $w_\la\in\wt{W}_\infty$ is the $k$-Grassmannian element 
corresponding to the typed $k$-strict partition $\la$, then we have
\begin{equation}
\label{DtoT}
\DS_{w_\la}(X\,;Y)=H_\la(X\,;Y_{(k)}).
\end{equation}
\end{thm}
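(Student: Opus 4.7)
The plan is to prove the three identifications by a unified strategy that reduces types B, C, D to the classical Lascoux--Sch\"utzenberger identification via the splitting formulas for $\CS$ and $\DS$, with the three parts forming a hierarchy of increasing complexity.

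For part (a), the proposal is to work directly from the nilCoxeter definition $\AS_\om(Y) = \langle A_1(y_1)\cdots A_{n-1}(y_{n-1}), \om\rangle$. Expanding the product expresses this coefficient as a sum $\sum_{(a,b)} y^b$ over pairs consisting of a reduced word $a$ for $\om$ and a compatible weakly increasing sequence $b$. For $\om$ Grassmannian with descent at $m$ and shape $\la$, I would establish the standard bijection between such compatible pairs and semistandard Young tableaux of shape $\la$ with entries in $\{1,\ldots,m\}$, yielding $s_\la(Y_{(m)})$ by the tableau formula. Alternatively, one checks that both sides are symmetric in $y_1,\ldots,y_m$ (the first since $\om$ has no descent before $m$, so $\partial_i \AS_\om = 0$ for $i<m$) and satisfy the same Pieri/divided-difference recursion, inducting on $|\la|$ from the trivial case $\la=\emptyset$.

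For part (b), the key tool is the splitting formula (\ref{dbleC2}) specialized at $Z=0$, giving
\[
\CS_{w_\la}(X;Y) \;=\; \sum_{uv = w_\la} F_u(X)\,\AS_v(Y),
\]
summed over reduced factorizations with $u\in W_\infty$ and $v\in S_\infty$. The heart of the argument is the combinatorial classification of such factorizations when $w_\la$ is $k$-Grassmannian: they correspond to decompositions of $\la$ into a part $\mu$ occupying the columns $>k$ (corresponding to a $0$-Grassmannian $u$, so that the strict nature of $\mu$ matches the negative entries of $w_\la$) and a complementary horizontal piece inside the first $k$ columns (corresponding to a Grassmannian permutation $v$ with descent at $k$). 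Part (a) then identifies each $\AS_v(Y)$ with a Schur polynomial in $Y_{(k)}$, while each $F_u(X)$ is a Schur $Q$-function $Q_\mu(X)$. Summing these products and matching against the raising-operator expansion of $R^\la\,\ti_\la$ requires the generating-function identity $\ti_r(X;Y_{(k)}) = \sum_{i+j=r} q_i(X)\,e_j(Y_{(k)})$, which encodes exactly the convolution produced by the splitting.

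Part (c) proceeds in parallel using the nilCoxeter algebra $\wt{\cW}_n$ and the type D splitting of $\DS_{w_\la}$, with two extra features to track: the type of $\la$, absorbed via the $\star$-convention in (\ref{Heq}), and the factor $2^{-\ell_k(\la)}$, which reflects the fact that type D Billey--Haiman polynomials are built from Schur $P$-functions rather than $Q$-functions. I expect the main obstacle to lie in part (b): precisely matching the combinatorics of reduced factorizations of $w_\la$ with the set $\cC(\la)$ that governs the denominators $(1+R_{ij})^{-1}$ in $R^\la$. This match is the substantive content of \cite{BKT2} and is where the full calculus of $k$-strict partitions and $k$-transition trees from \S\ref{tts} is essential; the analogous step for part (c), with its additional parity/type bookkeeping, is the content of \cite{BKT4}.
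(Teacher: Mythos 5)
Your plan is genuinely different from the paper's proof, and for parts (b) and (c) it would not go through as sketched. The paper does \emph{not} prove (\ref{CtoT}) or (\ref{DtoT}) by a direct combinatorial match between the nilCoxeter expansion and the raising-operator expansion. Instead (\S\ref{pf45}), it first argues that the single Schubert polynomials $\CS_w$, $\DS_w$ represent the Schubert classes in the stable cohomology ring $\IH(G/B)$, and then observes that the theta and eta polynomials represent the same classes by the geometric Giambelli theorems (Theorems~\ref{IGthm} and~\ref{OGthmD}). Equality then follows by comparing both sides to a common geometric object; the only combinatorial input is the trivial check that single-row polynomials match. So the whole weight of (b) and (c) rests on the geometry, which you never invoke.

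Your proposal for part (b) has a concrete gap. When you specialize (\ref{dbleC2}) at $Z=0$ you get
\[
\CS_{w_\la}(X\,;Y) \;=\; \sum_{v u' = w_\la} F_v(X)\,\AS_{u'}(Y),
\]
where $v$ ranges over \emph{all} of $W_\infty$ (not just $0$-Grassmannian elements) and $u'\in S_\infty$. You then claim that each left factor is $0$-Grassmannian and that each $F_v(X)$ is a single Schur $Q$-function $Q_\mu(X)$. Neither claim holds: the paper itself records (in the discussion after Theorem~\ref{xtoy}) a reduced factorization of the $1$-Grassmannian element $2\ov{3}1$ with a non-$0$-Grassmannian, indeed non-fully-commutative, type A factor, and in general $F_v(X)=\sum_\mu e^v_\mu Q_\mu(X)$ is a positive linear combination of $Q$-functions governed by the $0$-transition tree of $v$. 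Moreover, even a corrected combinatorial match would run into the obstacle the paper flags explicitly: (\ref{CtoT}) is an identity in the ring $\Gamma[Y]$, where the $q_r(X)$ satisfy the quadratic relations (\ref{crels}), and those relations ``play a crucial role in the proof.'' A purely bijective argument carried out in the free polynomial ring $\Z[c_1,c_2,\ldots;Y]$ cannot see these relations, and indeed the two sides of (\ref{CtoT}) are \emph{not} equal as expressions in free variables $c_r$ --- only after imposing the relations. So the approach as stated would fail even after fixing the factorization claim. Your part (a), by contrast, is fine: the reduced-word/compatible-sequence bijection (or the symmetry-plus-divided-difference argument) is a standard correct proof, and is an honest alternative to the paper's geometric deduction.

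\end{document}
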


Although the equality of polynomials (\ref{StoS}) in $\Z[Y]$ is
directly analogous to (\ref{CtoT}) and (\ref{DtoT}), the latter two
equations are much harder to prove. One reason for this is that
e.g.\ (\ref{CtoT}) is an equality in the ring
$$\Gamma[Y]=\Z[q_1(X),q_2(X),\ldots\, ;\, y_1,y_2,\ldots],$$ where there
are relations among the $q_r$, and these relations play a crucial role
in the proof. In fact, the proofs of (\ref{CtoT}) and (\ref{DtoT}) in
\cite{BKT2, BKT4} rely on Theorems \ref{IGthm} and \ref{OGthmD} (see
\S \ref{pf45}).  We will generalize Theorem \ref{xtoy} below to a
result (Theorem \ref{dbleACD}) which gives analogous formulas for
arbitrary double Schubert polynomials.

\begin{remark}
Each of the Schur, theta, and eta polynomials in Theorem \ref{xtoy}
may be expressed as a sum of monomials indexed by certain tableaux on
$\la$. The latter are fillings of the boxes of the Young diagram of
$\la$ with (marked) positive integers satisfying natural
conditions. These {\em tableau formulas} have the form
\[
s_\la(Y_{(m)}) = \sum_Ty^T,\
\Ti_\la(X\,;Y_{(k)}) = \sum_U2^{n(U)}(xy)^U,\
H_\la(X\,;Y_{(k)}) = \sum_{V}2^{n(V)}(xy)^V
\]
summed over all semistandard tableaux $T$ with entries at most $m$,
all $k$-bitableaux $U$, and all typed $k'$-bitableaux $V$,
respectively, on the diagram of $\la$. The precise definitions of
these objects are given in \cite{Fu4, T4, T7}.
\end{remark}

\begin{example}
(a) Let $\om$ be a Grassmannian permutation with unique descent at $m$
  and $\la=(\om_m-m, \ldots, \om_1-1)$ be the corresponding
  partition. The double Schubert polynomial $\AS_{\om}(Y,Z)$ is a
  {\em double Schur polynomial} $s_\la(Y_{(m)},Z)$, also known as a
  factorial Schur function. In this case, formula (\ref{dbleA2}) can
  be made more explicit, by working as follows. In any reduced
  factorization $\om=uu'$, the right factor $u'$ is also Grassmannian,
  and corresponds to a partition $\mu$ whose diagram is contained in
  the diagram of $\la$; moreover, we have
  $\AS_{u'}(Y)=s_\mu(Y_{(m)})$. The left factor $u$ is a fully
  commutative element of $S_\infty$ (in the sense of \cite{St2}), and
  it follows from \cite[\S 2]{BJS} that the Schubert polynomial
  $\AS_{u^{-1}}(Z)$ in (\ref{dbleA2}) is a flagged skew Schur
  polynomial. Setting $Z_{(r)}=(z_1,\ldots,z_r)$ for each $r$, we
  conclude (see also \cite[Prop.\ 4.1]{Kre}) that
\[
s_{\la}(Y_{(m)},Z)= \sum_{\mu\subset \la} 
s_\mu(Y_{(m)})\det\left(e_{\la_i-\mu_j-i+j}(-Z_{(\la_i+m-i)})\right)_{1\leq
  i,j\leq m}.
\]

\noindent
(b) Let $\la$ be a $k$-strict partition. In any reduced factorization
$w_\la=uv$ of the $k$-Grassmannian element $w_\la$ in $W_\infty$, the
right factor $v$ is also $k$-Grassmannian, and hence $v=w_\mu$ for
some $k$-strict partition $\mu$. Assuming that $u\in S_\infty$, we
note that $u$ may not be fully commutative (for example, let $k=1$ and
consider the factorization $2\ov{3}1=uv$ with $u=321$ and
$v=2\ov{1}3$), although it is a {\em skew} element of $W_\infty$, in
the sense of \cite[Def.\ 4]{T4}.  Equations (\ref{dbleC2}) and
(\ref{CtoT}) give
\begin{equation}
\label{dbleC3}
\CS_{w_\la}(X\,;Y,Z) = \sum_{uw_\mu=w_\la}\Ti_\mu(X\,;Y_{(k)})\AS_{u^{-1}}(-Z)
\end{equation}
summed over all reduced factorizations $uw_\mu=w_\la$ with $u\in
S_\infty$. It follows from \cite[Prop.\ 4]{T4} that the $k$-strict
partitions $\mu$ in (\ref{dbleC3}) satisfy $\mu\subset \la$ and
$\ell_k(\mu)=\ell_k(\la)$.  Suppose next that $k=0$ and $\la$ is a
strict partition of length $\ell$.  Then $w_\la$ is a fully
commutative element and $\CS_{w_\la}(X\,;Y,Z) = Q_\la(X\,;Z)$ is a
double analogue of Schur's $Q$-function introduced by Ivanov
\cite{I}. Equation (\ref{dbleC3}) becomes
\[
Q_\la(X\,;Z) = \sum_{\mu\subset\la}
Q_\mu(X)\det\left(e_{\la_i - \mu_j}(-Z_{(\la_i-1)})\right)_{1\leq i,j\leq \ell}
\]
summed over all strict partitions $\mu\subset\la$ with
$\ell(\mu)=\ell(\la)=\ell$.
\end{example}

\subsection{Splitting formulas for Schubert polynomials}
\label{splitsps}

Following \cite{BKTY1, T6}, we say that an element $w\in W_\infty$ is
{\em compatible} with the sequence $\fraka \, :\, a_1 < \cdots < a_p$
of elements of $\N_0$ if all descent positions of $w$ are contained in
$\fraka$.  We say that an element $w\in \wt{W}_\infty$ is {\em
  compatible} with the sequence $\fraka \, :\, a_1 < \cdots < a_p$ of
elements of $\N_\Box$ if all descent positions of $w$ are listed among
$\Box, a_1,\ldots, a_p$, if $a_1=1$, or contained in $\fraka$,
otherwise. Such sequences $\fraka$ parametrize parabolic subgroups
of the respective Weyl groups, and $w$ is compatible with $\fraka$ if
and only if it indexes a Schubert variety in the associated
homogeneous space, or a corresponding degeneracy locus (see \S
\ref{cohgp} and \S \ref{dloc}).

Let $w$ be an element of $W_\infty$ (respectively $\wt{W}_\infty$)
compatible with $\fraka$ as above and let $\frakb \, :\, b_1 < \cdots
<b_q$ be a second sequence of elements of $\N_0$ (respectively
$\N_\Box$) such that $w^{-1}$ is compatible with $\frakb$. Suppose
that $w=\om\in S_\infty$ and $a_1>0$. Given any sequence of partitions
$\underline{\la}=(\la^1,\ldots,\la^{p+q-1})$, we define
\begin{equation}
\label{dbfdef00}
c^\om_{\underline{\la}} := \sum_{u_1\cdots u_{p+q-1} = \om}
c_{\la^1}^{u_1}\cdots c_{\la^{p+q-1}}^{u_{p+q-1}}.
\end{equation}
Next suppose that $w \in W_\infty$ and $b_1=0$. Given any sequence of
partitions $\underline{\la}=(\la^1,\ldots,\la^{p+q-1})$ with $\la^q$
$a_1$-strict, we define
\begin{equation}
\label{dbfdef0}
f^w_{\underline{\la}} := \sum_{u_1\cdots u_{p+q-1} = w}
c_{\la^1}^{u_1}\cdots c_{\la^{q-1}}^{u_{q-1}}
e_{\la^q}^{u_q}c_{\la^{q+1}}^{u_{q+1}}\cdots c_{\la^{p+q-1}}^{u_{p+q-1}}.
\end{equation}
Finally, suppose that $w \in \wt{W}_\infty$ and $b_1=\Box$. 
Given any sequence of partitions
$\underline{\la}=(\la^1,\ldots,\la^{p+q-1})$ with $\la^q$ $a_1$-strict
and typed, define 
\begin{equation}
\label{dbfd}
g^w_{\underline{\la}} := \sum_{u_1\cdots u_{p+q-1} = w}
c_{\la^1}^{u_1}\cdots c_{\la^{q-1}}^{u_{q-1}}
d_{\la^q}^{u_q}c_{\la^{q+1}}^{u_{q+1}}\cdots c_{\la^{p+q-1}}^{u_{p+q-1}}.
\end{equation}
The sums in equations (\ref{dbfdef00}), (\ref{dbfdef0}), and
(\ref{dbfd}) are over all reduced factorizations $u_1\cdots u_{p+q-1}
= w$ such that $u_i\in S_\infty$ for all $i\neq q$, $u_j(i)=i$
whenever $j<q$ and $i\leq b_{q-j}$ or whenever $j>q$ and $i \leq
a_{j-q}$ (and where we set $u_j(0)=0$). Moreover, the nonnegative
integers $c^u_\la$, $e^u_\la$, and $d^u_\la$ in these formulas are the
(mixed) Stanley coefficients which were defined in \S \ref{tts}.

It is shown in \cite[\S 4]{BKTY1} that the $c^\om_{\underline{\la}}$
in (\ref{dbfdef00}) may be interpreted as special cases of type A {\em
  quiver coefficients}. These are certain integers which appear in a
formula of Buch and Fulton \cite{BF} for the cohomology class of a
quiver variety (a degeneracy locus associated to a sequence of vector
bundles and vector bundle morphisms over a fixed base, in the
form of an oriented quiver of type A). Equations (\ref{dbfdef0}) and
(\ref{dbfd}) therefore provide analogues $f^w_{\underline{\la}}$ and
$g^w_{\underline{\la}}$ of these coefficients in the other classical
Lie types, for the degeneracy loci considered in \S \ref{dloc}.

 Set  $Y_i := \{y_{a_{i-1}+1},\ldots,y_{a_i}\}$ for each
$i\geq 1$ and $Z_j := \{z_{b_{j-1}+1},\ldots,z_{b_j}\}$ for each
$j\geq 1$. Notice in particular that $Y_1=\emptyset$ if $a_1=0$
or $a_1=\Box$.

\begin{thm}[Splitting Schubert polynomials, \cite{BKTY1, T6}]
\label{dbleACD} 
{\em (a)} Suppose that $a_1>0$ and that $\om$ and $\om^{-1}$ in $S_\infty$
are compatible with the sequences $\fraka$ and $\frakb$,
respectively. Then the Schubert polynomial $\AS_\om(Y,Z)$ satisfies
\begin{equation}
\label{dbASsplitting}
\AS_\om = \sum_{\underline{\la}} c^\om_{\underline{\la}}\,
s_{\la^1}(0/Z_q)\cdots s_{\la^{q-1}}(0/Z_2)
s_{\la^q}(Y_1/Z_1)s_{\la^{q+1}}(Y_2) \cdots s_{\la^{p+q-1}}(Y_p)
\end{equation}
summed over all sequences of partitions
$\underline{\la}=(\la^1,\ldots,\la^{p+q-1})$.

\medskip
\noin
{\em (b)} Suppose that $b_1=0$ and that $w$ and $w^{-1}$ in $W_\infty$
are compatible with the sequences $\fraka$ and $\frakb$,
respectively. Then the Schubert polynomial $\CS_w(X\,;Y,Z)$ satisfies
\begin{equation}
\label{dbCSsplitting}
\CS_w = \sum_{\underline{\la}} 
f^w_{\underline{\la}}\,
s_{\la^1}(0/Z_q)\cdots s_{\la^{q-1}}(0/Z_2)
\Ti_{\la^q}(X\,;Y_1)s_{\la^{q+1}}(Y_2)\cdots s_{\la^{p+q-1}}(Y_p)
\end{equation}
summed over all sequences of partitions 
$\underline{\la}=(\la^1,\ldots,\la^{p+q-1})$
with $\la^q$ $a_1$-strict.

\medskip
\noin
{\em (c)} Suppose that $b_1=\Box$ and that $w$ and $w^{-1}$ in $\wt{W}_\infty$
are compatible with the sequences $\fraka$ and $\frakb$,
respectively. Then the Schubert polynomial $\DS_w(X\,;Y,Z)$ satisfies
\begin{equation}
\label{dbDSsplitting}
\DS_w = \sum_{\underline{\la}} 
g^w_{\underline{\la}}\,
s_{\la^1}(0/Z_q)\cdots s_{\la^{q-1}}(0/Z_2)
H_{\la^q}(X\,;Y_1)s_{\la^{q+1}}(Y_2)\cdots s_{\la^{p+q-1}}(Y_p)
\end{equation}
summed over all sequences of partitions 
$\underline{\la}=(\la^1,\ldots,\la^{p+q-1})$
with $\la^q$ $a_1$-strict and typed.
\end{thm}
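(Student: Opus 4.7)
The plan is to derive the three splitting formulas in parallel from the nilCoxeter-algebra definitions of \S \ref{nilSch}, by expanding each double Schubert polynomial along the parabolic structure prescribed by $\fraka$ and $\frakb$, reducing each block to a Schubert polynomial indexed by a Grassmannian Weyl group element, and then invoking Theorem \ref{xtoy} to convert these into Schur, theta, and eta polynomials. Part (a) is essentially the case of part (b) obtained by setting $X=0$ and restricting to $S_\infty$, so I would focus on (b) and (c) and treat (a) in parallel.

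Concretely, I would begin from the identity (\ref{dbleC2}) and its type D analogue (proved by the same nilCoxeter bookkeeping applied to $D(X)$ in place of $C(X)$), which already write $\CS_w$ and $\DS_w$ as sums over reduced factorizations $uvu'=w$ with $u,u'\in S_\infty$. The product $A_1(y_1)\cdots A_{n-1}(y_{n-1})$ that defines $\AS_{u'}(Y)$ factors through the parabolic blocks indexed by $\fraka$, and the compatibility of $w$ with $\fraka$ guarantees that any such $u'$ admits a further reduced factorization $u'=u_{q+1}\cdots u_{p+q-1}$ whose $j$th factor has support in the block associated to $a_{j-q}$, so that $u_j(i)=i$ for $i\leq a_{j-q}$. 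A parallel argument on the $\tilde{A}_i(z_i)$ side splits $u$ along $\frakb$, and combining the central factor $v$ with the $Y_1$-block produces a single element $u_q\in W_\infty$ (respectively $\wt{W}_\infty$) of the correct support. One thus rewrites the polynomial as a sum over reduced factorizations $u_1\cdots u_{p+q-1}=w$ of the type appearing in (\ref{dbfdef00})--(\ref{dbfd}), with each outer $\AS_{u_j}$ paired with a block $Y_i$ or $Z_i$ and $u_q$ paired with $(X,Y_1)$. Applying the Lascoux--Sch\"utzenberger transition tree $T(u_j)$ expands the outer $\AS_{u_j}$ as $\sum_{\la^j} c^{u_j}_{\la^j}\,\AS_{w_{\la^j}}$, which by Theorem \ref{xtoy}(a) equals $\sum_{\la^j} c^{u_j}_{\la^j}\,s_{\la^j}$ in the appropriate variable block; the analogous expansion of $u_q$ via $T^{a_1}(u_q)$ or $\wt{T}^{a_1}(u_q)$, combined with Theorem \ref{xtoy}(b)--(c), produces $\Ti_{\la^q}(X;Y_1)$ or $\Eta_{\la^q}(X;Y_1)$ with coefficients $e^{u_q}_{\la^q}$ or $d^{u_q}_{\la^q}$. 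Summing over all sequences $\underline{\la}$ yields (\ref{dbASsplitting})--(\ref{dbDSsplitting}).

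The principal obstacle is the combinatorial claim that the $k$-transition trees $T^k(w)$ and $\wt{T}^k(w)$ correctly enumerate the expansion of $\CS_w(X;Y_{(k)})$ and $\DS_w(X;Y_{(k)})$ in the basis of theta and eta polynomials, so that the number of Grassmannian leaves of shape $\la$ equals the structure constant $e^w_\la$ or $d^w_\la$ that arises in the reduced-factorization sum. This is the combinatorial heart of the proof and is established via an iterated application of the type C and type D transition equations of \cite{Bi1, T6}, generalized from the maximal isotropic case $k=0$ to arbitrary $k$; a particularly delicate point specific to type D is the interaction of the reflections $s_\Box$ and $s_1$ in the definition of $\wt{T}^1$, which must be handled case by case near the fork of the Dynkin diagram. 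Once this tree-enumeration lemma is in place, the rest of the argument is a careful accounting of reduced factorizations and of the Stanley coefficients $c^u_\la$, $e^u_\la$, $d^u_\la$ defined in \S \ref{tts}, and the three parts of the theorem follow together.
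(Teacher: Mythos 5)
Your overall strategy is the one the paper takes: split the double Schubert polynomial along the parabolic structure using the nilCoxeter algebra, then expand each block via the (type A, C, D) transition machinery and convert the Grassmannian leaves to Schur, theta, and eta polynomials via Theorem~\ref{xtoy}. The reduction of part (a) to the $X=0$, $S_\infty$ case of (b), the identification of the $q$-th factor as the one paired with $(X, Y_1)$, and the recognition that the enumeration $\CS_w(X\,;Y_{(k)})=\sum_\la e^w_\la\,\Ti_\la(X\,;Y_{(k)})$ (Theorem~\ref{StanC}) is the combinatorial heart of the matter, are all exactly right.

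There is, however, one step as written that would not go through. You expand ``the outer $\AS_{u_j}$'' via the transition tree $T(u_j)$ as $\sum_{\la^j} c^{u_j}_{\la^j}\,\AS_{w_{\la^j}}$, but the Lascoux--Sch\"utzenberger transition equation for a genuine Schubert polynomial has the form $\AS_{u_j} = y_r\,\AS_{u_jt_{rs}} + \sum_{i\in I(u_j)}\AS_{u_jt_{rs}t_{ir}}$, and the $y_r$-weighted term does not vanish in finitely many variables; iterating it does \emph{not} give the clean Schur expansion with Stanley coefficients. That expansion holds only for the stable limit $F_{u_j}=\sum_\la c^{u_j}_\la s_\la$, i.e.\ the Stanley symmetric function, which \emph{is} symmetric. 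The fix is that the nilCoxeter regrouping of $C(X)$, the $A_i(y_i)$, and the $\tilde A_i(z_i)$ along the parabolic blocks does not actually produce Schubert polynomials $\AS_{u_j}$ in each block; it produces Stanley symmetric functions $F_{u_j}$ (and one central mixed Stanley function $J_{u_q}(X\,;Y_1)$) in mutually disjoint variable groups --- this is exactly what the paper means by ``splitting formulas which express them as sums of products of (mixed) Stanley functions.'' Once you make this identification explicit, the transition-tree expansion and Theorem~\ref{xtoy} apply as you intend, and the three parts of the theorem follow as you describe. So the gap is not a different route but a conflation of $\AS_{u_j}$ with $F_{u_j}$ at the one place where the distinction actually bites.
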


Theorem \ref{dbleACD} includes multiple formulas for the double
Schubert polynomials in each Lie type, depending on the choice of the
two sequences $\fraka$ and $\frakb$. For each fixed Weyl group element
$w$, there is a basic formula which involves the descents of $w$ and
$w^{-1}$, defined by choosing $\fraka$ and $\frakb$ to be minimal, and
all the other formulas are obtained from it by splitting the groups of
variables further.

On the other hand, suppose that the sequences $\fraka$ and $\frakb$
are fixed; then the expressions for the Schubert polynomials
displayed in equations (\ref{dbASsplitting}), (\ref{dbCSsplitting}),
and (\ref{dbDSsplitting}) are {\em uniquely determined}. This follows
from the fact that the Schur, theta, and eta polynomials in the
respective groups of variables appearing in Theorem \ref{dbleACD} are
linearly independent over the ring of integers (see also Remark
\ref{rmksix}).  It would be desirable to eliminate the hypothesis on
$b_1$ from parts (b) and (c) of the theorem; for some obstacles in the
way of achieving this, see \cite[Example 3]{T6}.

\begin{example}
\label{zerosplit}
Suppose that $a_1=0$ and $a_1=\Box$ in parts (b) and (c) of Theorem
\ref{dbleACD}. Then we obtain the splitting formulas
\begin{equation}
\label{dbCSsplita10}
\CS_w = \sum_{\underline{\la}} 
f^w_{\underline{\la}}\,
s_{\la^1}(0/Z_q)\cdots s_{\la^{q-1}}(0/Z_2)
Q_{\la^q}(X) s_{\la^{q+1}}(Y_2)\cdots s_{\la^{p+q-1}}(Y_p)
\end{equation}
and 
\begin{equation}
\label{dbDSsplita10}
\DS_w = \sum_{\underline{\la}} 
g^w_{\underline{\la}}\,
s_{\la^1}(0/Z_q)\cdots s_{\la^{q-1}}(0/Z_2)
P_{\la^q}(X) s_{\la^{q+1}}(Y_2) \cdots s_{\la^{p+q-1}}(Y_p),
\end{equation}
respectively, where the sums are over all sequences of partitions
$\underline{\la}=(\la^1,\ldots,\la^{p+q-1})$ with $\la^q$ strict. Thus
we see that in all cases, the Schubert polynomials can be expressed as
a {\em positive} linear combination of products of Jacobi-Trudi
determinants times (at most) a single Schur Pfaffian. Recall also that
in this situation, there exist tableau-based combinatorial expressions
for the coefficients $f^w_{\underline{\la}}$ and $g^w_{\underline{\la}}$. 
Single Schubert polynomial versions of equations (\ref{dbCSsplita10}) 
and (\ref{dbDSsplita10}) were given in \cite{Yo}.
\end{example}

\begin{example}
To simplify the notation, write $\CS_w$ for $\CS_w(X\,;Y,Z)$, $Q_\la$
for $Q_\la(X)$, and $\Ti^{(k)}_\la$ for $\Ti_\la(X\,; Y_{(k)})$. 

\medskip
\noin (a) Let $w=3\ov{1}\ov{2}=s_0s_1s_0s_1s_2s_1\in W_3$ and set
$\frakb= (0<1)$. The following equalities correspond to $\fraka=(1<2)$
and $\fraka=(0<1<2)$, respectively, in Theorem \ref{dbleACD}(b).
\begin{align*}
\CS_{3\ov{1}\ov{2}} &= \Ti^{(1)}_{(4,2)} +\Ti^{(1)}_{(3,2)}\,y_2-z_1\,
\Ti^{(1)}_{(3,2)} \\ &= Q_{(4,2)} +
Q_{(4,1)}\,y_1+Q_{(3,2)}\,y_1+Q_{(3,1)}\,y_1^2+
\left(Q_{(3,2)}+Q_{(3,1)}\,y_1+Q_{(2,1)}\,y_1^2\right)\,y_2 \\ &\quad
-z_1\left(Q_{(3,2)}+Q_{(3,1)}\,y_1+Q_{(2,1)}\,y_1^2\right).
\end{align*}

\medskip
\noin (b) Let $w=12\ov{3}=s_2s_1s_0s_1s_2\in W_3$, set $\frakb=(0<2)$,
$e_i^y=e_i(y_1,y_2)$ and $e_i^z=e_i(z_1,z_2)$ for $i=1,2$. The
following equalities correspond to taking $\fraka$ to be $(2)$, 
$(1<2)$, and $(0<2)$, respectively, in Theorem \ref{dbleACD}(b).
\begin{align*}
\CS_{12\ov{3}} &= \Ti^{(2)}_5-e_1^z\,\Ti^{(2)}_4+e^z_2\,\Ti^{(2)}_3 \\
&=\Ti^{(1)}_5+\Ti^{(1)}_4\,y_2-e_1^z\left(\Ti^{(1)}_4+\Ti^{(1)}_3\,y_2\right)
+e_2^z\left(\Ti^{(1)}_3+\Ti^{(1)}_2\,y_2\right) \\
&= Q_5 + Q_4\,e_1^y+Q_3\,e_2^y-e_1^z\left(Q_4 + Q_3\,e_1^y+Q_2\,e_2^y\right)
+e_2^z\left(Q_3+Q_2\,e_1^y+Q_1\,e_2^y\right).
\end{align*}
\end{example}

\section{Degeneracy loci}
\label{dloc}

In this section, we show how the splitting formulas of Theorem
\ref{dbleACD} translate directly into Chern class formulas for
degeneracy loci in the sense of \cite{Fu3} and \S \ref{ecdl}, with the
symmetries native to the corresponding $G/P$ space.

Let $c=(c_0,c_1,c_2,\ldots)$ and $d=(d_0,d_1,d_2,\ldots)$ be two
families of commuting variables, with $c_0=d_0=1$ as usual. We first
extend the definitions of the polynomials $s_\la(c)$, $\Ti_\la(c)$,
and $\Eta_\la(c)$ from \S \ref{class} and \S \ref{isogiam} to obtain
polynomials in the formal difference $c-d$ (this notation stems from
the theory of $\lambda$-rings). Define elements $g_r$ and $h_r$ for
$r\in \Z$ by the identities of formal power series
\begin{gather*}
\sum_{r=-\infty}^{+\infty}g_rt^r := 
\left(\sum_{i=0}^\infty c_it^i\right)
\left(\sum_{i=0}^\infty d_it^i\right)^{-1} \ ; \\
\sum_{r=-\infty}^{+\infty}h_rt^r := 
\left(\sum_{i=0}^\infty(-1)^ic_it^i\right)^{-1}
\left(\sum_{i=0}^\infty(-1)^id_it^i\right).
\end{gather*}
For any partition $\la$, define the {\em Schur
polynomial} $s_\la(c-d)$ by
\[
s_\la(c-d) := R^{0}\, h_\la = \det(h_{\la_i+j-i})_{i,j}.
\]
For any $k$-strict partition $\la$, define the 
theta polynomial $\Ti_\la(c-d)$ by
\[
\Theta_\la(c-d) := R^\la\, g_\la, 
\]
where $R^\la$ denotes the raising operator in (\ref{ReqC}). Finally,
for any typed $k$-strict partition $\la$, define the eta polynomial
$\Eta_\la(c-d)$ by
\[
\Eta_\la(c-d) := 2^{-\ell_k(\la)} R^\la \star g_\la,
\]
where $R^\la$ denotes the raising operator in (\ref{ReqD}), and the
action $\star$ is defined as in \S \ref{ogs}, replacing $c_\la$ by
$g_\la$ throughout. If $V$ and $V'$ are two vector bundles on an
algebraic variety $M$ with total Chern classes $c(V)$ and $c(V')$,
respectively, we denote the class $s_\la(c(V)-c(V'))$ by
$s_\la(V-V')$, the class $\Ti_\la(c(V)-c(V'))$ by $\Ti_\la(V-V')$, and
the class $\Eta_\la(c(V)-c(V'))$ by $\Eta_\la(V-V')$.

\subsection{Type A degeneracy loci}
\label{dlA}

Fix a sequence $\fraka \,
:\, a_1 < \cdots < a_p$ of positive integers with $a_p < n$, which
gives a subset of the nodes of the Dynkin diagram for the root system
of type $\text{A}_{n-1}$:
\medskip
\[
\includegraphics[scale=0.35]{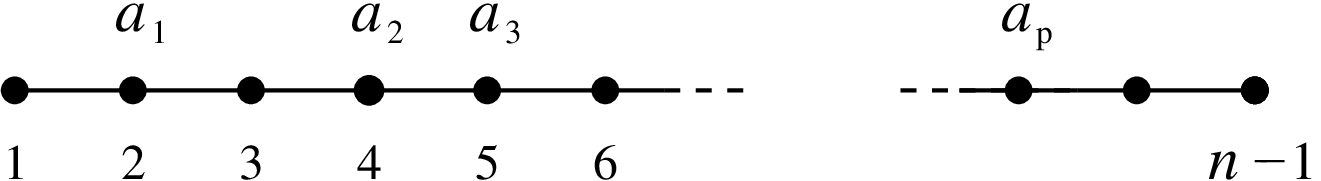}
\]
Let $S^\fraka$ be the set of permutations $\om\in S_n$ whose descent
positions are listed among the integers $a_1,\ldots, a_p$, i.e., that
are compatible with $\fraka$. These elements are the minimal length
coset representatives in $S_n/S_\fraka$, where $S_\fraka$ denotes the
parabolic subgroup of $S_n$ generated by the simple transpositions
$s_i$ for $i\notin \{a_1,\ldots, a_p\}$.

Let $E\to M$ be a vector bundle of rank $n$ on an algebraic variety
$M$, assumed to be smooth for simplicity. 
Consider a partial flag of subbundles of $E$
\[
0 \subset E_1 \subset \cdots \subset E_p \subset E
\]
with $\rank E_r = a_r$ for each $r$, and a complete flag $$0 \subset
F_1\subset \cdots \subset F_n = E$$ of subbundles of $E$ (with $\rank
F_s = s$ for each $s$).  For every $\om\in S^{\fraka}$, we define
the {\em degeneracy locus} $\X_\om\subset M$ as the locus of $b \in M$
such that
\[
\dim(E_r(b)\cap F_s(b))\geq \#\,\{\,i \leq a_r \ |\ \om_i> n-s\,\}
\ \, \forall \, r,s.
\]
A precise definition of $\X_\om$ as a subscheme of $\X$ can be obtained
by pulling back from the universal case, which occurs on the partial
flag bundle $\F^\fraka(E)$ (see \cite{Fu3} and \cite[\S 6.2 and
  App.\ A]{FP} for more details). Assume further that $\X_\om$ has
pure codimension $\ell(\om)$ in $M$, which is the case when the vector
bundles are in general position. The next result will be a formula for
the cohomology class $[\X_\om]$ in $\HH^{2\ell(\om)}(M)$ in terms of the
Chern classes of the bundles $E_r$ and $F_s$.

Consider a second sequence $\frakb \, :\, 0 \leq b_1 < \cdots <b_q$
with $b_q <n$.

\begin{thm}[\cite{BKTY1}]
\label{dbleAloci}
Suppose that $\om\in S^{\fraka}$ and that $\om^{-1}$ is compatible with 
$\frakb$. Then we have
\begin{align*}
[\X_\om] 
&= \sum_{\underline{\la}} c^\om_{\underline{\la}}\,
s_{\wt{\la}^1}(F_{n-b_{q-1}} - F_{n-b_q})\cdots
s_{\wt{\la}^q}(E-E_1-F_{n-b_1})
\cdots s_{\wt{\la}^{p+q-1}}(E_{p-1}-E_p)
\end{align*}
in $\HH^*(M)$, where the sum is over all sequences of
partitions $\underline{\la}=(\la^1,\ldots,\la^{p+q-1})$ and the 
coefficients $c^w_{\underline{\la}}$ are given
by {\em (\ref{dbfdef00})}.
\end{thm}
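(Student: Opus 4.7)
The plan is to deduce the Chern class formula for $[\X_\omega]$ from the splitting identity for the double Schubert polynomial $\AS_\omega(Y,Z)$ in Theorem \ref{dbleACD}(a), together with the geometric dictionary between equivariant cohomology of $G/B$ and degeneracy loci developed in \S \ref{ecdl}. First I would recall that the Lascoux--Sch\"utzenberger double Schubert polynomials $\AS_\omega(Y,Z)$ represent the equivariant Schubert classes $[X_\omega]^T$ in $\HH^*_T(\GL_n(\C)/B)$; via the isomorphism \eqref{BMiso} this gives the class of the universal degeneracy locus on the Borel mixing space, and $[\X_\omega]$ is then obtained by pulling back along the classifying map $M \to BB\times_{BG}BB$ determined by $E_\bull$ and $F_\bull$. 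Under this pullback, the blocks of variables $Y_i = \{y_{a_{i-1}+1},\dots,y_{a_i}\}$ and $Z_j = \{z_{b_{j-1}+1},\dots,z_{b_j}\}$ specialize to Chern roots of the successive quotients of the two flags, in the pattern dictated by the sequences $\fraka$ and $\frakb$.

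Second, since $\omega$ and $\omega^{-1}$ are compatible with $\fraka$ and $\frakb$ respectively, Theorem \ref{dbleACD}(a) applies and splits $\AS_\omega(Y,Z)$ as the displayed integer combination of products of supersymmetric Schur polynomials in the individual variable blocks, weighted by the Stanley coefficients $c^\omega_{\underline{\la}}$ of \eqref{dbfdef00}. Each such supersymmetric Schur polynomial $s_{\la}(Y_i/Z_j)$ converts, via the Jacobi--Trudi identity \eqref{JT} applied to the formal difference, into a Schur polynomial $s_{\wt{\la}}$ in the total Chern classes of the corresponding formal difference of bundles. Substituting these geometric identifications block by block produces precisely the right-hand side of the theorem, with the partition conjugation $\la \mapsto \wt{\la}$ arising mechanically from the passage between elementary (Chern root) and homogeneous (Chern class) coordinates.

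The main obstacle is the first step: verifying that $\AS_\omega(Y,Z)$ is the correct representative of $[\X_\omega]$ under the geometric specialization to Chern roots. The cleanest route to this identification is through a uniqueness characterization of the universal Schubert classes by their compatibility with both left and right divided difference operators, together with a stability requirement -- the framework to be developed in \S \ref{ddgeom}. Once that identification is secured, the splitting of the polynomial combined with the Jacobi--Trudi translation is essentially mechanical, and the theorem follows immediately.
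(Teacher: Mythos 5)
Your proposal follows exactly the route the paper takes: first identify $\AS_\om(Y,Z)$ with the universal Schubert class on the Borel mixing space $BB\times_{BG}BB$ (for which the paper cites Fulton \cite{Fu1, Fu3} and also offers the divided-difference uniqueness argument of Theorem~\ref{gm} as an alternative, precisely the route you favor), then apply the geometrization map to the splitting formula \eqref{dbASsplitting} of Theorem~\ref{dbleACD}(a), translating each supersymmetric Schur factor block-by-block into a Schur polynomial of the corresponding formal difference of bundles. This is the same strategy the paper carries out in detail for Theorem~\ref{dbleDloci} and sketches for type A, so your argument is correct and coincides with the paper's.
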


\subsection{Symplectic degeneracy loci}
\label{sdl}

Fix a sequence $\fraka \, :\, a_1 < \cdots < a_p$
of nonnegative integers with $a_p <n$, which is a subset of the nodes
of the Dynkin diagram for the root system of type $\text{C}_n$:
\medskip
\[
\includegraphics[scale=0.35]{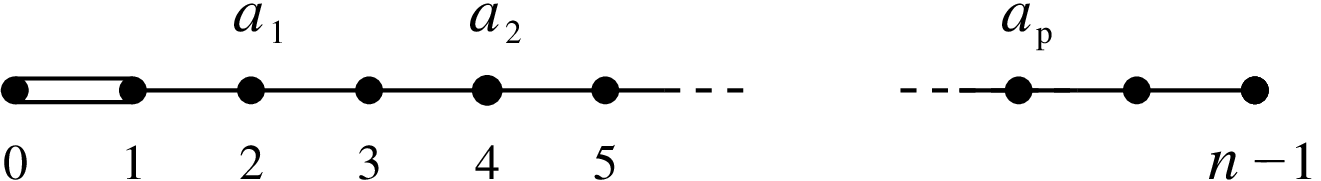}
\]
Denote by $W^\fraka$ the set of signed permutations $w\in W_n$ whose
descent positions are listed among the integers $a_1,\ldots,
a_p$.

Let $E\to M$ be a vector bundle of rank $2n$ on a smooth algebraic
variety $M$. Assume that $E$ is a {\em symplectic} bundle, so that $E$
is equipped with an everywhere nondegenerate skew-symmetric form
$E\otimes E\to \C$.  Consider a partial flag of subbundles of $E$
\[
0 \subset E_p \subset \cdots \subset E_1 \subset E
\]
with $\rank E_r = n-a_r$ and $E_1$ isotropic, and a (complete)
isotropic flag $$0 \subset F_1\subset \cdots \subset F_{2n}=E$$ of
subbundles of $E$ with $\rank F_s = s$ for each $s$ (and
$F_{n+s}=F_{n-s}^{\perp}$ for $0\leq s < n$). Fix a second sequence
$\frakb \, :\, 0=b_1 < \cdots <b_q$ with $b_q <n$, and define
quotient bundles
\begin{equation}
\label{quoteq}
\begin{split}
Q_1 & := E/E_1,\  Q_2 :=
E_1/E_2,\ \ldots, \ Q_p := E_{p-1}/E_p \\  
&\wh{Q}_2 := F_n/F_{n-b_2}, \ 
\ldots, \ \wh{Q}_q := F_{n-b_{q-1}}/F_{n-b_q}.
\end{split}
\end{equation}

There is a group monomorphism $\phi:W_n\hra S_{2n}$ with image
\[
\phi(W_n)=\{\,\om\in S_{2n} \ |\ \om_i+\om_{2n+1-i} = 2n+1,
 \ \ \text{for all}  \ i\,\}.
\]
The map $\phi$ is determined by setting,
for each $w=(w_1,\ldots,w_n)\in W_n$ and $1\leq i \leq n$,
\[
\phi(w)_i :=\left\{ \begin{array}{cl}
             n+1-w_{n+1-i} & \mathrm{ if } \ w_{n+1-i} \ \mathrm{is} \
             \mathrm{unbarred}, \\
             n+\ov{w}_{n+1-i} & \mathrm{otherwise}.
             \end{array} \right.
\]
For every $w\in W^{\fraka}$ we have the {\em degeneracy locus}
$\X_w\subset M$, which is the locus of $b \in M$ such that
\[
\dim(E_r(b)\cap F_s(b))\geq \#\,\{\,i \leq n-a_r
\ |\ \phi(w)_i > 2n-s\,\} \ \, \forall \, r,s.
\]
As in the type A case, assuming that $\X_w$ has
pure codimension $\ell(w)$ in $M$, we give a formula for the 
class $[\X_w]$ in $\HH^{2\ell(w)}(M)$.

\begin{thm}[\cite{T6}]
\label{dbleCloci}
Suppose that $w\in W^{\fraka}$ and that $w^{-1}$ is compatible with 
$\frakb$. Then we have
\begin{align*}
[\X_w] &= \sum_{\underline{\la}} f^w_{\underline{\la}}\,
s_{\wt{\la}^1}(\wh{Q}_q)\cdots s_{\wt{\la}^{q-1}}(\wh{Q}_2)
\Ti_{\la^q}(Q_1-F_n) s_{\la^{q+1}}(Q_2)\cdots s_{\la^{p+q-1}}(Q_p) \\
&= \sum_{\underline{\la}} f^w_{\underline{\la}}\,
s_{\la^1}(F_{n+b_{q-1}} - F_{n+b_q})\cdots
\Ti_{\la^q}(E-E_1-F_n)
\cdots s_{\la^{p+q-1}}(E_{p-1}-E_p)
\end{align*}
in $\HH^*(M)$, where the sum is over all sequences of
partitions $\underline{\la}=(\la^1,\ldots,\la^{p+q-1})$ with $\la^q$
$a_1$-strict, and the coefficients $f^w_{\underline{\la}}$ are given
by {\em (\ref{dbfdef0})}.
\end{thm}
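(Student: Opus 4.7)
The plan is to deduce this theorem from the splitting formula for the double Schubert polynomials $\CS_w(X;Y,Z)$ in Theorem \ref{dbleACD}(b), by interpreting the abstract variables as Chern roots of the geometric bundles appearing in the statement. The bridge is the equivariant cohomology framework of \S \ref{ecdl}: via Graham's isomorphisms (\ref{btiso}) and (\ref{BMiso}), the degeneracy locus class $[\X_w]$ is pulled back (assuming general position, as discussed in \cite{Fu3, Gra}) from the universal equivariant Schubert class $[X_w]^T$ in $\HH^*_T(\Sp_{2n}/B)$. By the main result of \cite{IMN} (to be reproved in \S \ref{pfs}), $\CS_w(X;Y,Z)$ represents $[X_w]^T$ under the geometrization map of \S \ref{ddgeom}, which sends the power sums of $X$ to characteristic classes built from $-F_n$, the $Y$-variables to the Chern roots of the quotients $Q_1, \ldots, Q_p$ of (\ref{quoteq}) grouped according to $\fraka$, and the $Z$-variables to the Chern roots of $\wh{Q}_2, \ldots, \wh{Q}_q$ grouped according to $\frakb$.

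Granting this geometric substitution, I apply Theorem \ref{dbleACD}(b) termwise. Each factor $s_{\la^i}(Y_i)$ for $i > q$ becomes the Chern class $s_{\la^i}(Q_i)$ by the standard Jacobi--Trudi identification. Each factor $s_{\la^j}(0/Z_{q+1-j})$ for $1 \leq j \leq q-1$ becomes $s_{\wt{\la}^j}(\wh{Q}_{q+1-j})$ after invoking the supersymmetric identity $s_\mu(0/Z)=(-1)^{|\mu|}s_{\wt{\mu}}(Z)$ together with the sign convention implicit in the definition of $s_\la(V-V')$ in \S \ref{dloc}. Finally, the theta polynomial $\Ti_{\la^q}(X; Y_1)$ of \S \ref{ste}, whose raising-operator definition (\ref{Tidefn}) parallels the raising-operator definition of $\Ti_\la(c-d)$ in \S \ref{dloc}, becomes $\Ti_{\la^q}(Q_1 - F_n)$; this last step is a functorial consequence of Theorem \ref{IGthm}, applied to the universal $\IG$-fibration over $M$ on which the class $[\X_w]$ is pulled back. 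Combining these three substitutions gives the first displayed equality.

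For the second equality, I perform routine Chern class manipulations. From the Whitney sum formula, $[Q_i] = [E_{i-1}] - [E_i]$ (with $E_0 := E$), converting each $s_{\la^i}(Q_i)$ into $s_{\la^i}(E_{i-1} - E_i)$; likewise $\Ti_{\la^q}(Q_1 - F_n) = \Ti_{\la^q}(E - E_1 - F_n)$. To rewrite $s_{\wt{\la}^j}(\wh{Q}_{q+1-j})$ as $s_{\la^j}(F_{n+b_{q-j}} - F_{n+b_{q+1-j}})$, I use the symplectic duality $F_{n+s} = F_{n-s}^\perp$, which implies the Chern-class dualization identity $s_{\wt{\mu}}(V) = s_\mu(-V^\vee)$ applied to $V = \wh{Q}_{q+1-j}$. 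Tracking the index reversal induced by the duality produces the displayed form.

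The main obstacle is the careful bookkeeping of variables under the geometrization map: one must verify that the raising-operator expression defining $\Ti_\la(X;Y_{(k)})$ on the algebraic side transforms correctly into the raising-operator expression defining $\Ti_\la(Q_1 - F_n)$ on the Chern-class side, with the correct identification of the $k$ parameters $y_1,\ldots,y_k$ with Chern roots of the isotropic sub-quotient of rank $a_1$. Once this correspondence, together with the \cite{IMN} theorem in the form $\CS_w \mapsto [X_w]^T$, is established, the rest of the argument is formal. A secondary point is the general-position transversality needed to identify $[\X_w]$ with the pullback of the universal class, which is standard and treated in \cite{Fu3, PR2, Gra}.
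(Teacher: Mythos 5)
Your proposal follows the paper's approach: apply the geometrization map $\pi_n$ of Theorem~\ref{gm} to the splitting formula of Theorem~\ref{dbleACD}(b), then compute the image of each factor. The paper carries this out explicitly for Theorem~\ref{dbleDloci} and remarks that types A--C are simpler; type C is precisely $\pi_n$ applied to~(\ref{dbCSsplitting}).

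However, there is one step you misattribute, and it happens to be the one place where real work is required. You claim that the substitution $\Ti_{\la^q}(X\,;Y_1)\mapsto \Ti_{\la^q}(Q_1-F_n)$ is ``a functorial consequence of Theorem~\ref{IGthm}, applied to the universal $\IG$-fibration over $M$.'' It is not; Theorem~\ref{IGthm} is a statement about the cohomology of a single isotropic Grassmannian and does not, by itself, tell you what the ring homomorphism $\pi_n$ does to the generators $\ti_r$. The correct argument is a direct generating-function calculation: since $\ti_r(X\,;Y_1)=\sum_i q_{r-i}(X)e_i(y_1,\ldots,y_{a_1})$ and $\pi_n(q_m(X))=\sum_i e_i(\XX_n)h_{m-i}(\YY_n)$, $\pi_n(y_i)=-\x_i$, one expands to obtain
\[
\pi_n(\ti_r(X\,;Y_1)) = \sum_{j\geq 0}e_{r-j}(\x_1,\ldots,\x_n,-\x_1,\ldots,-\x_{a_1})\,h_j(\YY_n) = c_r(Q_1-F_n),
\]
and then $\pi_n(\Ti_{\la^q}(X\,;Y_1))=\Ti_{\la^q}(Q_1-F_n)$ follows because $\pi_n$ is a ring homomorphism and both theta polynomials are the same raising-operator expression $R^\la$ applied to monomials in the respective generators $\ti_r$ and $c_r$. (Theorem~\ref{IGthm} is used indirectly elsewhere --- it is an input to Theorem~\ref{xtoy}(b), which feeds into~\ref{dbleACD}(b) --- but not here.) Similarly, your description of $\pi_n$ as sending ``power sums of $X$ to characteristic classes built from $-F_n$'' is imprecise, since $\pi_n(q_r)$ mixes both sets of Chern roots. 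The remainder of your argument (the identification of $s_{\la^i}(Y_i)$ with $s_{\la^i}(Q_i)$, the conjugation identity $s_\mu(0/Z)=(-1)^{|\mu|}s_{\wt{\mu}}(Z)$ for the $Z$-factors, and the duality $s_{\wt{\mu}}(\wh{Q}_r)=s_\mu(F_{n+b_{r-1}}-F_{n+b_r})$ via $F_{n+s}=F_{n-s}^\perp$) is correct and matches the paper.
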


\subsection{Orthogonal degeneracy loci}
\label{odl}

\subsubsection{The odd orthogonal case}

Let $E\to M$ be a vector bundle of rank $2n+1$ on a smooth algebraic
variety $M$. Assume that $E$ is an {\em orthogonal} bundle, i.e.\ $E$
is equipped with an everywhere nondegenerate symmetric form $E\otimes
E\to \C$. Fix a sequence $\fraka \, :\, a_1 < \cdots < a_p$ of
nonnegative integers with $a_p <n$, as in the symplectic case. Consider a
partial flag of subbundles of $E$
\[
0 \subset E_p \subset \cdots \subset E_1 \subset E
\]
with $\rank E_r = n-a_r$ and $E_1$ isotropic, and a complete isotropic
flag $$0\subset F_1\subset \cdots \subset F_{2n+1}=E$$ of subbundles
of $E$.  Let $\frakb \, :\, 0=b_1 < \cdots <b_q$ be a sequence of
integers with $b_q <n$ and define the quotient bundles $Q_r$ and
$\wh{Q}_s$ using the same equations (\ref{quoteq}) as in type C.

There is a group monomorphism $\phi':W_n\hra S_{2n+1}$ with image
\[
\phi'(W_n)=\{\,\om\in S_{2n+1} \ |\ \om_i +\om_{2n+2-i} = 2n+2,
 \ \ \text{for all}  \ i\,\}.
\]
The map $\phi$ is determined by setting,
for each $w=(w_1,\ldots,w_n)\in W_n$ and $1\leq i \leq n$,
\[
\phi'(w)_i :=\left\{ \begin{array}{cl}
             n+1-w_{n+1-i} & \mathrm{ if } \ w_{n+1-i} \ \mathrm{is} \
             \mathrm{unbarred}, \\
             n+1+\ov{w}_{n+1-i} & \mathrm{otherwise}.
             \end{array} \right.
\]

Fix an element $w$ in $W^{\fraka}$. The {\em degeneracy locus}
$\X_w\subset M$ is the locus of $b \in M$ such that
\[
\dim(E_r(b)\cap F_s(b))\geq \#\,\{\,i \leq n-a_r
\ |\ \phi'(w)_i > 2n+1-s\,\} \ \, \forall \, r,s,
\]
and we assume as before that $\X_w$ has pure codimension
$\ell(w)$ in $M$.  For any $a_1$-strict partition $\la$, let
$\Ti'_\la= 2^{-s(w)}\Ti_\la$, where $s(w)=\#\{i\ |\ w_i <0\}$.  We
then have the following analogue of Theorem \ref{dbleCloci}.

\begin{thm}[\cite{T6}]
\label{dbleBloci}
Suppose that $w\in W^{\fraka}$ and that $w^{-1}$ is compatible with 
$\frakb$. Then we have
\begin{gather*}
[\X_w] = \sum_{\underline{\la}} f^w_{\underline{\la}}\,
s_{\wt{\la}^1}(\wh{Q}_q)\cdots s_{\wt{\la}^{q-1}}(\wh{Q}_2)
\Ti'_{\la^q}(Q_1-F_{n+1}) s_{\la^{q+1}}(Q_2)\cdots s_{\la^{p+q-1}}(Q_p) \\
= \sum_{\underline{\la}} f^w_{\underline{\la}}\,
s_{\la^1}(F_{n+1+b_{q-1}} - F_{n+1+b_q})\cdots
\Ti'_{\la^q}(E-E_1-F_{n+1})
\cdots s_{\la^{p+q-1}}(E_{p-1}-E_p)
\end{gather*}
in $\HH^*(M)$, where the sum is over all sequences of
partitions $\underline{\la}=(\la^1,\ldots,\la^{p+q-1})$ with $\la^q$
$a_1$-strict, and the coefficients $f^w_{\underline{\la}}$ are given
by {\em (\ref{dbfdef0})}.
\end{thm}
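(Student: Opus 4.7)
The plan is to reduce Theorem \ref{dbleBloci} to its symplectic analogue, Theorem \ref{dbleCloci}, by exploiting the relationship $\BS_w=2^{-s(w)}\CS_w$ between the type B and type C double Schubert polynomials. First, following the framework of \S\ref{ecdl}, I would interpret $[\X_w]\in \HH^*(M)$ as a pullback of the universal class from the Borel mixing space attached to $G=\SO_{2n+1}(\C)$. Under the isomorphism (\ref{BMiso}), this universal class corresponds to the equivariant Schubert class $[X_w]^T\in \HH^*_T(G/B)$, which by the main result of \cite{IMN} (to be reproved in \S\ref{pfs}) is represented by $\BS_w(X;Y,Z)$, once the alphabets $X$, $Y$, $Z$ are identified with Chern roots via the odd orthogonal geometrization map.

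Next I would expand $\CS_w$ via Theorem \ref{dbleACD}(b) and multiply by $2^{-s(w)}$. Since the compatibility conditions on $w$ and $w^{-1}$ with $\fraka$ and $\frakb$, the indexing set of reduced factorizations $u_1\cdots u_{p+q-1}=w$, and the mixed Stanley coefficients $f^w_{\underline{\la}}$ are all defined in the ambient Weyl group $W_\infty$ and do not depend on the Lie type, the formula specializes verbatim. Because $s(w)$ is a constant independent of $\underline{\la}$, the scalar $2^{-s(w)}$ can be pulled inside the sum and absorbed into the single theta polynomial factor, producing $\Ti'_{\la^q}(X;Y_1)=2^{-s(w)}\Ti_{\la^q}(X;Y_1)$ as in the statement.

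The third step is to apply the odd orthogonal geometrization map to pass from polynomials to Chern classes. Under this map the Schur polynomials $s_{\la^i}$ translate exactly as in type A into Segre expressions for the quotient bundles $Q_i$ and $\wh{Q}_j$ of (\ref{quoteq}), while the theta factor $\Ti'_{\la^q}(X;Y_1)$ translates into $\Ti'_{\la^q}(Q_1-F_{n+1})$. The appearance of $F_{n+1}$ rather than $F_n$ reflects the fact that in type B the maximal isotropic $F_n$ has orthogonal complement $F_n^\perp=F_{n+1}$ of rank $n+1$, so the image of the $X$-alphabet under the geometrization map is shifted by one rank relative to the symplectic case. The equivalence of the two displayed forms for $[\X_w]$ follows from the Whitney sum formula applied to the filtrations $E_p\subset \cdots \subset E_1\subset E$ and $0\subset F_1\subset \cdots \subset F_{n+1}$, together with the identity $s_{\wt{\la}}(V)=s_\la(-V)$ and the obvious observation that $s_\la(V-V')$ is additive in the formal difference.

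The main obstacle is to justify the precise form of the odd orthogonal geometrization map, in particular the appearance of $F_{n+1}$ in the argument of $\Ti'_{\la^q}$. This requires bookkeeping of Chern class identities coming from the graded ring isomorphism $\HH^*(\IG(n-k,2n),\Q)\cong \HH^*(\OG(n-k,2n+1),\Q)$ sending $[X_\la]^{\IG}$ to $2^{\ell_k(\la)}[X_\la]^{\OG}$, and verifying its extension from the Grassmannian case to the full flag bundle setting with two isotropic flags. A secondary issue is that the argument requires the key input from \cite{IMN}, namely that $\BS_w$ represents the equivariant Schubert class; granting this, the remainder of the proof is essentially a formal consequence of the type C splitting formula combined with the $2^{-s(w)}$ normalization.
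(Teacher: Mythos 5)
Your proposal takes essentially the same route as the paper: the paper proves the type D case in detail via the geometrization map applied to the splitting formula of Theorem \ref{dbleACD}, and points out that the type B argument is simpler and parallel; your plan of using $\BS_w=2^{-s(w)}\CS_w$, expanding $\CS_w$ by Theorem \ref{dbleACD}(b), absorbing $2^{-s(w)}$ into $\Ti'_{\la^q}$, and then pushing through the (type B) geometrization map is precisely that. The ``obstacle'' you flag concerning $F_{n+1}$ is actually a non-issue: since $F_{n+1}=F_n^\perp$ and the nondegenerate form induces a trivialization of $(F_{n+1}/F_n)^{\otimes 2}$, one has $c(F_{n+1})=c(F_n)$ in $\HH^*(M,\Q)$, so $\Ti'_{\la^q}(Q_1-F_{n+1})=\Ti'_{\la^q}(Q_1-F_n)$ and the image of $q_r(X)$ under the type B geometrization map has exactly the same supersymmetric generating function as in type C (there is no genuine rank shift in the $X$-alphabet); the choice of $F_{n+1}$ over $F_n$ in the statement is cosmetic, made so that the second displayed form uses orthogonal complements of the $F_{n-b_j}$.
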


\subsubsection{The even orthogonal case}
\label{eodl}

Let $E\to M$ be an orthogonal vector bundle of rank $2n$ on a smooth
algebraic variety $M$. Fix a complete isotropic flag $$0 \subset
F_1\subset \cdots \subset F_{2n}=E$$ of subbundles of $E$, and a
sequence $\fraka \, :\, a_1 < \cdots < a_p$ of elements of $\N_\Box$
with $a_p <n$, which gives a subset of the vertices of the Dynkin
diagram for the root system of type $\text{D}_n$:
\medskip
\[
\includegraphics[scale=0.35]{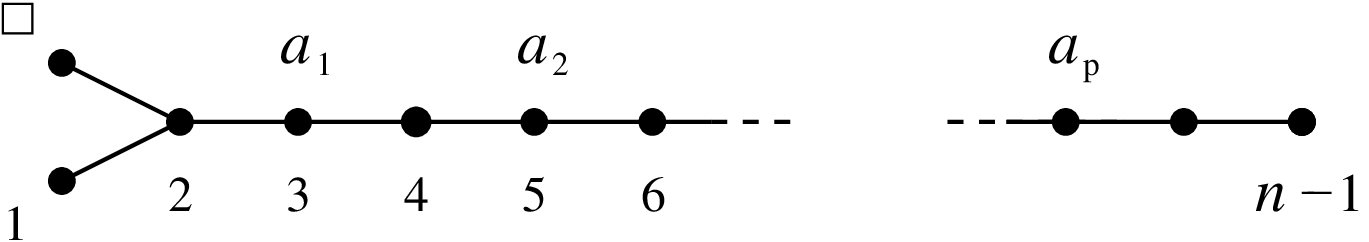}
\]
Consider a partial flag of subbundles of $E$
\[
0 \subset E_p \subset \cdots \subset E_1 \subset E
\]
with $\rank E_r = n-a_r$ and $E_1$ isotropic and in the same family as
$F_n$ if $a_1=\Box$. Fix a sequence $\frakb \, :\, \Box =b_1 < \cdots
<b_q$ with $b_q <n$, and define the quotient bundles $Q_r$ and
$\wh{Q}_s$ as in type C, using the equations (\ref{quoteq}).

Recall that $\wt{W}_n$ is a subgroup of $W_n$, and hence we have a
group monomorphism $\phi:\wt{W}_n\hra S_{2n}$, defined by restricting
the map $\phi$ of \S \ref{sdl} to $\wt{W}_n$.  Let $\wt{W}^\fraka$ be
the set of signed permutations $w\in \wt{W}_n$ which are compatible with
$\fraka$. For every $w\in \wt{W}^{\fraka}$, we define the {\em degeneracy
locus} $\X_w\subset M$ as the closure of the
locus of $b \in M$ such that
\[
\dim(E_r(b)\cap F_s(b)) = \#\,\{\,i \leq n-a_r
\ |\ \phi(w_0ww_0)_i > 2n-s\,\} \ \, \forall \, r,s,
\]
with the reduced scheme structure. Assume further that $\X_w$ has
pure codimension $\ell(w)$ in $M$, and consider its cohomology class
$[\X_w]$ in $\HH^{2\ell(w)}(M)$.

\begin{thm}[\cite{T6}]
\label{dbleDloci}
Suppose that $w\in \wt{W}^{\fraka}$ and that $w^{-1}$ is compatible with 
$\frakb$. Then we have
\begin{align*}
[\X_w] &= \sum_{\underline{\la}} g^w_{\underline{\la}}\,
s_{\wt{\la}^1}(\wh{Q}_q)\cdots s_{\wt{\la}^{q-1}}(\wh{Q}_2)
\Eta_{\la^q}(Q_1-F_n) s_{\la^{q+1}}(Q_2)\cdots s_{\la^{p+q-1}}(Q_p) \\
&= \sum_{\underline{\la}} g^w_{\underline{\la}}\,
s_{\la^1}(F_{n+b_{q-1}} - F_{n+b_q})\cdots
\Eta_{\la^q}(E-E_1-F_n)
\cdots s_{\la^{p+q-1}}(E_{p-1}-E_p),
\end{align*}
in $\HH^*(M)$, where the sum is over all sequences of
partitions $\underline{\la}=(\la^1,\ldots,\la^{p+q-1})$ with $\la^q$
$a_1$-strict and typed, and the coefficients $g^w_{\underline{\la}}$
are given by {\em (\ref{dbfd})}.
\end{thm}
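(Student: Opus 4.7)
The plan is to derive Theorem \ref{dbleDloci} as the direct geometric translation of the type D splitting formula (\ref{dbDSsplitting}) via the equivariant cohomology dictionary of \S \ref{ecdl}, following the pattern already established for types A and C in Theorems \ref{dbleAloci} and \ref{dbleCloci}. First I would recast the problem in its universal form: the pair $(E_\bull, F_\bull)$ of partial and complete flags on $E$ is classified by a morphism $f:M\to BB\times_{BG}BB$ with $G=\SO_{2n}(\C)$ and $B$ a Borel subgroup, and $\X_w$ is the pullback of the universal degeneracy locus living in the Borel mixing space. Under the isomorphism (\ref{BMiso}), the class of this universal locus corresponds to the equivariant Schubert class $[X_w]^T$ in $\HH^*_T(G/B)$, and factoring through the partial flag bundle parametrized by $\fraka$ encodes the compatibility $w\in\wt{W}^\fraka$.

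Next I would invoke the main theorem of \cite{IMN}, whose new proof occupies \S \ref{pfs}, which asserts that $\DS_w(X\,;Y,Z)$ represents $[X_w]^T$ under the type D geometrization map. This identifies the $X$ variables with the Chern roots of the tautological isotropic quotient, and the $Y$ and $Z$ variables with the Chern roots of the two opposite complete isotropic flag filtrations on $E$. The compatibility of $w$ (resp.\ $w^{-1}$) with $\fraka$ (resp.\ $\frakb$) then triggers the splitting formula (\ref{dbDSsplitting}) of Theorem \ref{dbleACD}(c), partitioning the $Y$ variables into blocks $Y_i$ and the $Z$ variables into blocks $Z_j$ matching the jumps of the respective flags.

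Finally I would translate (\ref{dbDSsplitting}) term by term into Chern classes of the bundles appearing in the theorem. The block $Y_i$ furnishes the Chern roots of the quotient bundle $Q_i=E_{i-1}/E_i$ (with $E_0:=E$), so $s_{\la^i}(Y_i)$ becomes $s_{\la^i}(Q_i)$; the block $Z_j$ furnishes those of $\wh{Q}_j$, and combining this with the identity $s_\mu(0/Z_{(r)})=(-1)^{|\mu|}s_{\wt{\mu}}(Z_{(r)})$ turns $s_{\la^j}(0/Z_j)$ into $s_{\wt{\la}^j}(\wh{Q}_j)$; and by the very definition (\ref{Heq}) of the eta polynomial, $\Eta_{\la^q}(X\,;Y_1)$ becomes $\Eta_{\la^q}(Q_1-F_n)$. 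The two equivalent forms displayed in the theorem are then related by standard rewriting of virtual bundle differences using the Whitney formula for the exact sequences $0\to E_r\to E_{r-1}\to Q_r\to 0$ and $0\to F_{n-b_j}\to F_{n-b_{j-1}}\to \wh{Q}_j\to 0$, together with the orthogonality relation $F_{n+b_j}=F_{n-b_j}^{\perp}$.

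The main obstacle will be the parity bookkeeping peculiar to type D: the typed condition on $\la^q$, the $\star$-action distinguishing $\eta_k$ from $\eta_k'$ in (\ref{etaequ}), and the dichotomy of $\SO(V)$-orbits on maximal isotropics must be correlated with the family constraint forcing $E_1$ and $F_n$ to lie in the same family when $a_1=\Box$. Checking that the geometrization map preserves the distinction between $\eta_k$ and $\eta_k'$ (equivalently, between the types $1$ and $2$ of $\la^q$), and that the reduced-closure definition of $\X_w$ matches the universal Schubert variety rather than a nontrivial multiple or a union of two components, is the delicate step where type D genuinely departs from types A and C.
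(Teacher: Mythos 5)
Your proposal is exactly the paper's proof: apply the type D geometrization map $\pi'_n$ to the splitting formula (\ref{dbDSsplitting}), invoke the type D analogue of Theorem \ref{gm} to identify $\pi'_n(\DS_w)$ with $[\X_w]$, and translate each factor via the splitting principle, with the block $Y_i$ furnishing the Chern roots of $Q_i$, the block $Z_j$ furnishing those of $\wh{Q}_j$, and $\ti_r(X;Y_1)\mapsto c_r(Q_1-F_n)=c_r(E-E_1-F_n)$. The type D parity concern you flag is resolved in the paper by the explicit computation $\pi'_n(e_{a_1}(Y_1))=(-1)^{a_1}\x_1\cdots\x_{a_1}=c_{a_1}(E_0-E_1)$, where $E_0$ is the maximal isotropic subbundle, which combined with (\ref{etaequ}) determines the images of $\eta_{a_1}$ and $\eta'_{a_1}$ and shows that $\pi'_n$ does respect the distinction between types $1$ and $2$.
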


\begin{example}
Suppose that $E=\C^N$ is a complex vector space, which in types B, C,
and D is equipped with a nondegenerate symmetric or skew-symmetric
bilinear form. The homogeneous space $M=F^{\fraka}(E)$ which
parametrizes all partial (isotropic) flags $E_\bull$ in $E$ is a $G/P$
space for the appropriate classical group $G$ and parabolic subgroup
$P$ corresponding to the sequence $\fraka$. In this case $E_\bull$ is
also used to denote the tautological filtration of the trivial rank
$N$ vector bundle $E$ over $M$, the vector bundles $F_s$ are all trivial,
and the formulas of this section solve the Giambelli problem for
$\HH^*(G/P)$ (see \cite[\S 5]{T6}). If we furthermore assume that $p=1$,
so that $\fraka=\{a_1\}$ and $M$ is an (isotropic) Grassmannian, then
Theorems \ref{dbleAloci}, \ref{dbleCloci}, and \ref{dbleDloci}
specialize to Theorems \ref{Gthm}, \ref{IGthm}, and \ref{OGthmD},
respectively.
\end{example}

Using the identification of degeneracy locus formulas with Giambelli
polynomials in equivariant cohomology (see \cite{Gra} and \ref{ecdl}
and \S \ref{ddgeom} of this paper), the theorems in this section
translate easily into corresponding results for the $T$-equivariant
cohomology ring of $G/P$. The Chern roots of the vector bundle $F_n$
(up to a sign) are identified with the weights of the linear
$T$-action on the standard representation $V$ of $G$; specifically,
the class $-c_1(F_{n+1-i}/F_{n-i})$ is mapped onto the variable $t_i$
in the notation of \cite[\S 10]{IMN} (and the variable $\y_i$ in \S
\ref{ddgeom} below). The results may also be used to deduce formulas
for the restriction of an equivariant Schubert class in $\HH^*_T(G/P)$
to a torus fixed point; see op.\ cit.\ for more details.

\section{Proofs of the main theorems}
\label{pfs}

We discuss here what is involved in proving the aforementioned
theorems. We pay particular attention to the proofs of Theorems
\ref{dbleAloci}--\ref{dbleDloci}, and establish the connection between
the Schubert polynomials of \cite{IMN} and the geometry of degeneracy
loci.

\subsection{Proofs of $\IG$ and $\OG$ Giambelli} 

Although briefly mentioned in \S \ref{class} and \S \ref{isogiam}, so
far there has been little discussion of {\em Pieri rules} for
isotropic Grassmannians. The first such rules were proved by Hiller
and Boe \cite{HB} for the maximal isotropic Grassmannians, and applied
by Pragacz \cite{P2} to prove his Giambelli formulas for these
spaces. Pragacz and Ratajski \cite{PR1, PR3} later obtained Pieri type
rules for general isotropic Grassmannians, but used a different notion
of `special Schubert classes' than ours (their special classes were
the Chern classes of the universal subbundles $E'$ in \S
\ref{isogiam}).

The proofs of Theorems \ref{IGthm} and \ref{OGthmD} from \cite{BKT2,
  BKT4} require the classical Pieri rules from \cite{BKT1}, which hold
for the special Schubert classes $c_r$ and $\tau_r$ defined in \S
\ref{isogiam}. For the symplectic Grassmannian $\IG(n-k,2n)$, the Pieri rule
has the form
\begin{equation}
\label{IGpieri}
 c_p \cdot [X_\lambda] = \sum_{\lambda \xrightarrow{p} \mu} 
 2^{N(\lambda,\mu)} \, [X_\mu] \,.
\end{equation}
When $k>0$, the Pieri relation $\lambda \xrightarrow{p} \mu$ and the
definition of the exponents $N(\la,\mu)$ is more complicated than in
the $k=0$ case of the Lagrangian Grassmannian. However, the Pieri rule
(\ref{IGpieri}) can still be used recursively to show that a general
Schubert class $[X_\la]$ may be written as a polynomial in the special
classes. Therefore, in order to prove Theorem \ref{IGthm}, it suffices
to show that the expression $R^\la\, c_\la$ satisfies the same Pieri
rule (\ref{IGpieri}), and this is the approach taken in \cite{BKT2}.
The complex argument that proves that $R^\la\,c_\la$ obeys
(\ref{IGpieri}) uses subtle alternating properties of $R^\la \, c_\la$
which depend on the order ideal $\cC(\la)$, and a substitution
algorithm which is a mathematical model of controlled evolution. The
reader will find an exposition of the main ideas in \cite[\S
  3]{T4}. The proof of Theorem \ref{OGthmD} is similar to that of
Theorem \ref{IGthm}. It exploits the weight space decomposition of
$\HH^*(\OG(m,2n),\Q)$ induced by the natural involution of the Dynkin
diagram of type $\text{D}_n$, and a surprising relation between the
Schubert calculus on even and odd orthogonal Grassmannians.

Currently, there is a second, simpler proof of Theorem \ref{IGthm},
using the {\em double theta polynomials} $\Ti_\la(c\, |\, t)$ defined
by Wilson \cite{TW, W}. These polynomials are analogues of the usual
double (or factorial) Schur polynomials, and give equivariant
Giambelli expressions for the Schubert classes in
$\HH^*_T(\IG(m,2n))$. This was proved by Ikeda and Matsumura \cite{IM}
and by the author and Wilson \cite{TW} in two different ways. Both
proofs rely on a key technical result from \cite{IM}, which is a
certain compatibility of the $\Ti_\la(c\, |\, t)$ with the action of
$t$-divided difference operators.

\subsection{Proofs of Theorems \ref{xtoy} and \ref{dbleACD}}
\label{pf45}

Following \cite{LS1, BH, FK2} or by specializing the arguments of \S
\ref{ddgeom}, one shows that the single Schubert polynomials represent
the Schubert classes in $\HH^*(G/B)$, and that the algebra that they
span is isomorphic to the stable cohomology ring $\IH(G/B)$ of the
complete flag manifold as $n\to \infty$. Therefore, to prove Theorem
\ref{xtoy} it suffices to show that (a) the Schur, theta, and eta
polynomials indexed by single row partitions agree with the
corresponding single Schubert polynomials, and (b) the Giambelli
formulas of \S \ref{class} and \S \ref{isogiam} hold in $\IH(G/B)$.
Assertion (a) is easy to check directly from the definition of the
Schubert polynomials, and (b) is an immediate consequence of Theorems
\ref{Gthm}, \ref{IGthm}, and \ref{OGthmD}.

The proof of Theorem \ref{dbleACD} is based on the transition
equations for Schubert polynomials mentioned in \S \ref{tts}, and
some important actors that have remained largely in the background 
until now, the {\em Stanley symmetric functions} of \cite{Sta, BH, FK2}
and the {\em mixed Stanley functions} of \cite{T6}. We will state
the key result here for the type C Billey-Haiman Schubert polynomials
$\CS_w(X\,;Y)$. Let $\CS_w(X\,;Y_{(k)})$ denote the power series
obtained from $\CS_w(X\,;Y)$ by setting $y_j=0$ for all $j>k$.

\begin{thm}[\cite{T6}]
\label{StanC}
Suppose that $w\in W_\infty$ is increasing up to $k$. Then we have
\begin{equation}
\label{CStan}
\CS_w(X\,;Y_{(k)}) = \sum_{\la} e^w_{\la}\,\Ti_{\la}(X\,;Y_{(k)}),
\end{equation}
where the sum is over all $k$-strict partitions $\la$ with $|\la|=\ell(w)$.
\end{thm}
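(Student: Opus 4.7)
My plan is to prove Theorem \ref{StanC} by induction on the depth of the $k$-transition tree $T^k(w)$ constructed in \S\ref{tts}, using a transition equation for $\CS_w(X;Y_{(k)})$ that mirrors exactly the branching rule for $T^k(w)$. The base case is when $w$ is already $k$-Grassmannian, so that $T^k(w) = \{w_\la\}$ has a single leaf of shape $\la$ and $e^w_\mu = \delta_{\la\mu}$; in this case the identity (\ref{CStan}) reduces to $\CS_{w_\la}(X;Y_{(k)}) = \Ti_\la(X;Y_{(k)})$, which is precisely part (b) of Theorem \ref{xtoy}.

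The key step is to establish the following transition identity: if $w \in W_\infty$ is increasing up to $k$ but is not $k$-Grassmannian, and $r$ denotes its largest descent (so $r > k$) with $s := \max\{i > r \mid w_i < w_r\}$, then
$$\CS_w(X;Y_{(k)}) = \sum_{v \in \Phi(w)} \CS_v(X;Y_{(k)}),$$
where $\Phi(w) = \Phi_1(w) \cup \Phi_2(w)$ is exactly the set of children of $w$ in $T^k(w)$. The starting point is the Billey-type transition equation for the full Schubert polynomial $\CS_w(X;Y)$, which has the shape
$$\CS_w(X;Y) = y_r\,\CS_{wt_{rs}}(X;Y) + \sum_{v \in \Phi_1(w)} \CS_v(X;Y) + \sum_{v \in \Phi_2(w)} \CS_v(X;Y).$$
Since $r > k$, specializing $y_{k+1} = y_{k+2} = \cdots = 0$ annihilates the first term and yields the claim. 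If a direct citation from \cite{Bi1} is not available in the precise form needed, I would derive this identity from the nilCoxeter definition (\ref{dbleC}): isolate the factor $(1 + y_r u_r)$ inside $A_1(y_1)\cdots A_{n-1}(y_{n-1})$, use the Yang-Baxter and commutation relations of \cite{FK2} to push it leftward across $C(X)$, and then apply a Monk-type rule obtained by reading off the coefficient of $u_r$.

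For the inductive step, assume (\ref{CStan}) holds for every $v \in \Phi(w)$. Then
$$\CS_w(X;Y_{(k)}) = \sum_{v \in \Phi(w)} \sum_\la e^v_\la\,\Ti_\la(X;Y_{(k)}) = \sum_\la \biggl(\sum_{v \in \Phi(w)} e^v_\la\biggr)\Ti_\la(X;Y_{(k)}).$$
Because $T^k(w)$ is built by attaching the subtree $T^k(v)$ at each child $v \in \Phi(w)$, the number of shape-$\la$ leaves of $T^k(w)$ equals $\sum_{v \in \Phi(w)} e^v_\la$, which is by definition $e^w_\la$. Since every branch in $T^k(w)$ preserves length, only partitions with $|\la| = \ell(w)$ can occur, matching the sum in (\ref{CStan}). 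The finiteness of $T^k(w)$ and the fact that all its leaves are $k$-Grassmannian (noted after the definition of the tree) guarantee that the induction terminates.

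The hard part will be verifying the transition identity and, in particular, matching the "new terms" produced by the algebraic transition with the combinatorially defined children $\Phi_1(w) \cup \Phi_2(w)$. One must check that the length-preservation conditions $\ell(wt_{rs}t_{ir}) = \ell(w)$ and $\ell(wt_{rs}\ov{t}_{ir}) = \ell(w)$ used in \S\ref{tts} correspond exactly to the nonzero summands on the right-hand side of the Billey-type recursion, and that no spurious terms arise indexed by elements that fail to be increasing up to $k$. The hypothesis that $r$ is the \emph{largest} descent of $w$, together with the fact that the transition only touches entries in positions $\geq r > k$, is precisely what ensures that every element of $\Phi(w)$ remains increasing up to $k$, so the inductive hypothesis applies and the tree-construction genuinely captures the iterated transitions.
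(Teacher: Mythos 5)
Your proposal takes exactly the transition-tree approach that the paper itself alludes to: the surrounding text in \S 7.2 says the argument "is based on the transition equations for Schubert polynomials mentioned in \S 4.3," and Theorem \ref{StanC} is stated as the key result quoted from \cite{T6}. The paper does not spell out a proof here, but your outline — induction over $T^k(w)$, with base case Theorem \ref{xtoy}(b), inductive step via a Billey-type transition identity specialized to $Y_{(k)}$ using $r > k$ to kill the $y_r\,\CS_{wt_{rs}}$ term, and the observation that summing $e^v_\la$ over the children $v \in \Phi(w)$ gives $e^w_\la$ — is precisely the intended argument.

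There is, however, one genuine flaw in your reasoning. You claim that "the transition only touches entries in positions $\geq r > k$" is "precisely what ensures that every element of $\Phi(w)$ remains increasing up to $k$." This is false as stated: the transposition $t_{ir}$ appearing in $\Phi_1(w)$ (and $\ov{t}_{ir}$ in $\Phi_2(w)$) moves the entry in position $i$, and $i$ ranges over all $1 \leq i < r$, including values $i \leq k$. The correct claim — that the length condition $\ell(wt_{rs}t_{ir}) = \ell(w)$ forces the new entry $(wt_{rs}t_{ir})_i = w_s$ to interlace correctly between $w_{i-1}$ and $w_{i+1}$, so the child is still increasing up to $k$ — requires a separate argument, and it is precisely the content of \cite[Lemma 3]{T6}, which the paper explicitly cites after the tree construction in \S 4.3. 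Without that lemma (or a direct proof of it) your induction has no valid inductive hypothesis at the children, so you should invoke the lemma rather than give the incorrect positional argument. The other substantive gap — matching the nonzero terms of the type C transition identity to the combinatorial set $\Phi_1(w) \cup \Phi_2(w)$ and verifying the exact shape of the $y_r\,\CS_{wt_{rs}}$ term — is correctly flagged by you as "the hard part"; that verification is where the bulk of the work in \cite{Bi1} and \cite{T6} lives, and your sketch of how to derive it from the nilCoxeter definition via Yang–Baxter is plausible but would need to be carried out in detail.
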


\noindent
For instance, the $k=1$ transition tree in Example \ref{treeC} 
corresponds to the equation
\[
\CS_{3\ov{1}2645}(X\,;Y_{(1)}) = \Ti_{(2,1,1,1)} + \Ti_5 +
2\,\Ti_{(3,1,1)}+\Ti_{(4,1)}+ \Ti_{(3,2)}.
\]
Taking $k=0$ in Theorem \ref{StanC} gives the expansion of
the type C Stanley symmetric function $F_w$ as a sum of Schur 
$Q$-functions, which was first proved by Billey
\cite{Bi1}:
\begin{equation}
\label{FtoQ}
F_w(X) = \sum_{\la\, :\, |\la| = \ell(w)} e^w_{\la}\,Q_{\la}(X)
\end{equation}
with the sum over strict partitions $\la$ of $\ell(w)$.

When $w$ is increasing up to $k$, we can identify $\CS_w(X\,;Y_{(k)})$
in (\ref{CStan}) with the restriction $J_w(X\,;Y_{(k)})$ of a `mixed
Stanley function' $J_w(X\,;Y)$ (see \cite[\S 2]{T6} for the precise
definitions). Furthermore, one shows directly from their definition in
\S \ref{nilSch} that the Schubert polynomials satisfy splitting
formulas which express them as sums of products of (mixed) Stanley
functions in mutually disjoint groups of variables. Theorem
\ref{dbleACD} follows immediately by combining these two ingredients.

\subsection{Divided differences and geometrization maps}
\label{ddgeom}

We define an action of $W_\infty$ on $\Gamma[Y,Z]$ by ring
automorphisms as follows. The simple reflections $s_i$ for $i>0$ act
by interchanging $y_i$ and $y_{i+1}$ and leaving all the remaining
variables fixed. The reflection $s_0$ maps $y_1$ to $-y_1$, fixes the
$y_j$ for $j\geq 2$ and all the $z_j$, and satisfies
\begin{equation}
\label{s_zero}
s_0(q_r(X)) = q_r(y_1,x_1,x_2,\ldots) =
q_r(X)+2\sum_{j=1}^ry_1^jq_{r-j}(X).
\end{equation}
For each $i\geq 0$, define the {\em divided difference operator}
$\partial_i^y$ on $\Gamma[Y,Z]$ by
\[
\partial_0^yf := \frac{f-s_0f}{-2y_1}, \qquad
\partial_i^yf := \frac{f-s_if}{y_i-y_{i+1}} \ \ \ \text{for $i>0$}.
\]
Consider the ring involution $\omega:\Gamma[Y,Z]\to\Gamma[Y,Z]$
determined by
\[
\omega(y_j) = -z_j, \qquad
\omega(z_j) = -y_j, \qquad
\omega(q_r(X))=q_r(X)
\]
and set $\partial_i^z:=\omega\partial_i^y\omega$ for each $i\geq 0$.

\begin{thm}[\cite{IMN}]
\label{uniq}
The polynomials $\CS_w(X\,;Y,Z)$ for $w\in W_{\infty}$ are the unique
family of elements of $\Gamma[Y,Z]$ satisfying the equations
\begin{equation}
\label{ddeqs}
\partial_i^y\CS_w = \begin{cases}
\CS_{ws_i} & \text{if $\ell(ws_i)<\ell(w)$}, \\ 
0 & \text{otherwise},
\end{cases}
\quad
\partial_i^z\CS_w = \begin{cases}
\CS_{s_iw} & \text{if $\ell(s_iw)<\ell(w)$}, \\ 
0 & \text{otherwise},
\end{cases}
\end{equation}
for all $i\geq 0$, together with the condition that the constant term 
of $\CS_w$ is $1$ if $w=1$, and $0$ otherwise. 
\end{thm}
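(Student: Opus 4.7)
The plan is a two-part argument, existence plus uniqueness, relying throughout on the nilCoxeter-algebra approach to Schubert polynomials.

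\emph{Existence.} I would verify directly that the $\CS_w$ defined by (\ref{dbleC}) satisfy each of the listed conditions. The containment $\CS_w \in \Gamma[Y,Z]$ follows from the Fomin-Kirillov commutation $C(s)C(t) = C(t)C(s)$, which makes the $X$-coefficients symmetric; setting $X = Y = Z = 0$ gives $\CS_w|_0 = \langle 1,w\rangle = \delta_{w,1}$, settling the constant-term condition. For the right divided differences, the target identity is
\begin{equation*}
\partial_i^y\bigl(C(X)\,A_1(y_1)\cdots A_{n-1}(y_{n-1})\bigr) \;=\; C(X)\,A_1(y_1)\cdots A_{n-1}(y_{n-1})\,u_i \qquad (i \geq 0)
\end{equation*}
inside $\cW_n$ with coefficients in $\Gamma[Y]$, together with its left analogue for $\partial_i^z$ involving the $\tilde{A}$'s. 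For $i \geq 1$ this reduces to the standard Yang-Baxter shuffle identity $A_i(a)A_{i+1}(b) - A_i(b)A_{i+1}(a) = (a-b)\,A_i(a)A_{i+1}(b)\,u_i$, a formal consequence of the nilCoxeter braid relations. Once the identity is known, (\ref{ddeqs}) follows from the elementary nilCoxeter calculation $\langle h\,u_i, w\rangle = \langle h, ws_i\rangle$ when $\ell(ws_i) < \ell(w)$, and $0$ otherwise. The case $i = 0$ is more delicate: by (\ref{s_zero}) the reflection $s_0$ flips $y_1$ and simultaneously adjoins $y_1$ to the list of $x$-variables, so one has $s_0\bigl(C(X)A_1(y_1)\bigr) = C(y_1)\,C(X)\,A_1(-y_1)$, and the needed identity is extracted by pulling $C(y_1)$ through $C(X)$ via the Fomin-Kirillov commutation and then reorganizing $C(y_1)\,A_1(-y_1)$ into $A_1(y_1)$ times $(1 - 2y_1\,u_0)$ using $u_0^2 = 0$ and the quartic braid $u_0u_1u_0u_1 = u_1u_0u_1u_0$.

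\emph{Uniqueness.} Let $\{A_w\}$ and $\{B_w\}$ be two families satisfying the conditions, and set $D_w := A_w - B_w$. I proceed by induction on $\ell(w)$. Assuming $D_v = 0$ for every $v$ with $\ell(v) < \ell(w)$, the equations (\ref{ddeqs}) force $\partial_i^y D_w = \partial_i^z D_w = 0$ for every $i \geq 0$, since either the right-hand side is already zero or it equals $D_{ws_i}$ or $D_{s_iw}$, which vanishes by hypothesis. Hence $D_w$ is simultaneously $s_i^y$- and $s_i^z$-invariant for all $i \geq 0$. The reflections $s_i^y$ with $i \geq 1$ swap adjacent $y$-variables, and since $Y$ is an \emph{infinite} sequence of variables in $\Gamma[Y,Z]$ while $D_w$ is a polynomial, invariance under all of them forces $D_w$ to be independent of $Y$: if $y_N$ were the largest $y$-variable occurring in $D_w$, then $s_N^y$ would introduce $y_{N+1}$, a contradiction. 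This is precisely where the stability of Schubert polynomials enters. Similarly $D_w$ is independent of $Z$, so $D_w \in \Gamma$. Finally, $s_0^y$-invariance amounts to the statement that the homomorphism $\Gamma \to \Gamma[y_1]$ adjoining $y_1$ to the $x$-variables fixes $D_w$; as this map is injective with nonzero positive-degree contribution on every nonconstant element, $D_w$ must be a scalar. The constant-term hypothesis then gives $D_w = 0$.

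\emph{Main obstacle.} The principal technical difficulty is the case $i = 0$ of the existence identity, where the $Y$-side and the $X$-side of the nilCoxeter word become entangled through the nontrivial action of $s_0$ on $\Gamma$ given by (\ref{s_zero}). This is precisely the ingredient that is not present in the Fomin-Kirillov-Stanley treatment of single Schubert polynomials, and it is what replaces the equivariant-localization computations of \cite{IMN}; the remaining steps are essentially formal consequences of the nilCoxeter relations and of stability.
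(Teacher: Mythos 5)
Your proposal is essentially the paper's own proof: existence is established inside the nilCoxeter algebra, with the divided differences $\partial_i^y$ and $\partial_i^z$ realized as right and left multiplication by $u_i$, the $i=0$ case handled via $s_0 C(X) = C(X)C(y_1)$, and uniqueness argued by induction on length combined with the stability observation that invariance under all $s_i$ and $\omega s_i\omega$ in infinitely many variables forces the difference to be a constant. Two small slips in your $i=0$ step are worth flagging: the product $C(y_1)A_1(-y_1)$ simplifies to $A_1(y_1)(1+2y_1u_0)$, not $A_1(y_1)(1-2y_1u_0)$; and the simplification uses only the telescoping cancellation coming from $u_j^2=0$ (the right half of $C(y_1)$ annihilates $A_1(-y_1)$), with the quartic braid relation $u_0u_1u_0u_1 = u_1u_0u_1u_0$ entering only via the Fomin--Kirillov commutation $C(s)C(t)=C(t)C(s)$ used to move $C(y_1)$ past $C(X)$. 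Finally, symmetry of the $X$-coefficients alone does not place $\CS_w$ in $\Gamma[Y,Z]$ rather than merely in (symmetric functions)$[Y,Z]$; one needs the identification of those coefficients with type C Stanley symmetric functions and the expansion (\ref{FtoQ}), as the paper indicates via (\ref{dbleC2}).
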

\begin{proof}
Define $\CS^{(n)}:= C(X)A_1(y_1)\cdots A_{n-1}(y_{n-1})$, so that we
have $\CS_w(X\,;Y)= \left\langle \CS^{(n)}, w \right\rangle$, for any
$w\in W_n$.  In \cite[Lemma 3.5]{FS} and \cite[Theorem 7.1]{FK2}
Fomin, Stanley, and Kirillov provide simple proofs (using the
nilCoxeter relations) that the single Schubert polynomials
$\CS_w(X\,;Y)$ satisfy the equations in (\ref{ddeqs}) which involve
the $\partial_i^y$ operators. We give the proof for $\partial_0^y$
below for the reader's convenience, since our setup differs from that
in \cite{FK2}. It follows from (\ref{FtoQ}) and (\ref{s_zero}) that
$s_0 C(X) = C(X)C(y_1)$, and we clearly have 
\[ 
C(y_1)A_1(-y_1)= A_1(y_1)(1+y_1 u_0)(1+y_1 u_0) = A_1(y_1)(1+2y_1 u_0).
\]
We deduce that
\begin{align*}
\partial_0^y\, \CS^{(n)} &=  
\frac{1}{-2y_1} \left( C(X)(A_1(y_1)-C(y_1)A_1(-y_1))A_2(y_2)\cdots
A_{n-1}(y_{n-1}) \right) \\
&= \frac{1}{-2y_1} \, C(X)A_1(y_1)(-2y_1 u_0)A_2(y_2)\cdots
A_{n-1}(y_{n-1}) = \CS^{(n)}u_0,
\end{align*}
which is equivalent the desired conditions
$\partial_0^y\,\CS_w(X\,;Y) = \CS_{ws_0}(X\,;Y)$, if
$\ell(ws_0)<\ell(w)$, and $\partial_0^y\,\CS_w(X\,;Y)=0$,
otherwise. Moreover, in view of equation (\ref{dbleC2}), the arguments
here and in loc.\ cit.\ extend easily to show that the double Schubert
polynomials $\CS_w(X\,;Y,Z)$ fulfill the entire list of conditions in
the theorem.

Following \cite[\S 7.4]{IMN}, the uniqueness is shown as follows. If
$\{\CS'_w\}$ for $w\in W_\infty$ is a second family of elements of
$\Gamma[Y,Z]$ satisfying the displayed conditions, then by inducting
on the length of $w$ one sees that $\partial_i^y(\CS'_w-\CS_w) =
\partial_i^z(\CS'_w-\CS_w)=0$ for all $i\geq 0$. We deduce that the
difference $\CS'_w-\CS_w$ is invariant under the action of $s_i$ and
$\omega s_i \omega$ for every $i$, and hence must be a
constant. Finally, the condition on the constant term implies that
$\CS'_w=\CS_w$, for all $w\in W_\infty$.
\end{proof}

The connection between Theorem \ref{dbleACD} and Theorems
\ref{dbleAloci}, \ref{dbleCloci}, \ref{dbleBloci}, and \ref{dbleDloci}
depends on an important ring homomorphism derived from \cite{T2, T3},
\cite[\S 10]{IMN}, and \cite{Gra}, which we call the {\em
  geometrization map}. We will discuss this homomorphism in detail in
the Lie types A, C, and D (leaving type B as an exercise for the
reader), and use it to give a complete proof of Theorem
\ref{dbleDloci}.

Let $G$ denote the group $\GL_n(\C)$, $\Sp_{2n}(\C)$, or
$\SO_{2n}(\C)$ with its standard representation $V=\C^n$ for $\GL_n$
or $V=\C^{2n}$ in the latter two cases. In type A, equip $V$ with the
zero form, and in types C and D equip $V$ with an antidiagonal
symplectic or orthogonal form $(\ ,\, )$, so that the standard basis
$\{e_1,\ldots,e_{2n}\}$ of $V$ satisfies $(e_i,e_j) = 0$ for $i+j \neq
2n+1$ and $(e_i,e_{2n+1-i}) = 1$, for $1\leq i \leq n$. We obtain an
induced vector bundle $E=EG\times^GV$ over $BG$ and bilinear form
$E\otimes E\to \C$. Let $V_\bull$ be the isotropic flag in $V$ with
$V_i = \langle e_1,\ldots,e_i\rangle$ for each $i\in [1,n]$, and $B$
denote the stabilizer of $V_\bull$. The pullback of the bundle $E$ to
$BB$ has an isotropic flag $\V'_\bull=\{EG\times^B V_i\}_i$ of
subbundles of $E$.  If $\mathrm{pr}_1$ and $\mathrm{pr}_2$ are the two
projection maps $BB\times_{BG} BB \to BB$, then we obtain the two
isotropic flags of subbundles $\V_\bull:=\mathrm{pr}_1^*\,\V'_\bull$ and
$F_\bull:=\mathrm{pr}_2^*\,\V'_\bull$ of the pullback of $E$ to
$BB\times_{BG}BB$. This is the universal case of the degeneracy locus
problems considered in \S \ref{dlA}, \S \ref{sdl}, and \S \ref{eodl},
for the parabolic subgroup $P=B$.

\medskip
\noin {\bf Type A.} Introduce two new sets of commuting variables
$\XX=(\x_1,\x_2,\ldots)$, $\YY=(\y_1,\y_2,\ldots)$ and let 
$\XX_n=(\x_1,\ldots,\x_n)$ and $\YY_n=(\y_1,\ldots,\y_n)$.
In this case $G=\GL_n$ and  it follows from the above
discussion and (\ref{natiso}) that there is a natural presentation
\begin{equation}
\label{presentationA}
\HH^*(BB\times_{BG} BB,\Z) \cong \Z[\XX_n,\YY_n]/\I_n,
\end{equation}
where $\I_n$ is the ideal generated by the differences
$e_i(\x_1,\ldots,\x_n)- e_i(\y_1,\ldots,\y_n)$ for $1\leq i
\leq n$. The inverse of the isomorphism (\ref{presentationA}) sends the
class of $\x_i$ to $-c_1(\V_i/\V_{i-1})$ and of $\y_i$ to
$-c_1(F_{n+1-i}/F_{n-i})$ for each $i$ with $1\leq i \leq n$.

The {\em geometrization map} is the ring homomorphism
\[
\rho_n : \Z[Y,Z] \to \Z[\XX_n,\YY_n]/\I_n
\]
defined by setting $\rho_n(y_i):=\x_i$ and $\rho_n(z_i):=\y_i$ for $1\leq
i \leq n$ and $\rho_n(y_i)=\rho_n(z_i)=0$ for $i>n$. Fulton \cite{Fu1,
  Fu3} showed that for $\om\in S_n$, the homomorphism $\rho_n$ sends
$\AS_\om(Y,Z)$ to a polynomial which represents the universal Schubert
class $[\X_\om]$ in the presentation (\ref{presentationA}). A different
way to establish this is obtained by arguing as in the proof of
Theorem \ref{gm} below.

\medskip
\noin {\bf Type C.} 
Here $G=\Sp_{2n}$ and we have a natural presentation
\begin{equation}
\label{presentationC}
\HH^*(BB\times_{BG} BB,\Z) \cong \Z[\XX_n,\YY_n]/\J_n,
\end{equation}
where $\J_n$ is the ideal generated by the differences
$e_i(\x_1^2,\ldots,\x_n^2)- e_i(\y_1^2,\ldots,\y_n^2)$ for $1\leq i
\leq n$. The inverse of the isomorphism (\ref{presentationC}) sends the
class of $\x_i$ to $-c_1(\V_{n+1-i}/\V_{n-i})$ and of $\y_i$ to
$-c_1(F_{n+1-i}/F_{n-i})$ for each $i$ with $1\leq i \leq n$.

Recall that $h_j(\YY_n)=s_j(\YY_n)$ denotes the $j$-th complete
symmetric polynomial, which is the sum of all monomials of total
degree $j$ in the variables $\YY_n$.  The {\em geometrization map}
is the ring homomorphism
\[
\pi_n : \Gamma[Y,Z] \to \Z[\XX_n,\YY_n]/\J_n
\]
determined by setting
\begin{gather*}
\pi_n(q_r(X)):=\sum_{i=0}^r e_i(\XX_n)h_{r-i}(\YY_n) \ \ \text{for all} \
r, \\ \pi_n(y_i):=\begin{cases} -\x_i & \text{if $1\leq i\leq n$}, \\
\ \ 0 & \text{if $i>n$}, \end{cases}
\ \ \ \text{and} \ \ \ \pi_n(z_j):= \begin{cases}
\y_j & \text{if $1\leq j\leq n$}, \\
0 & \text{if $j>n$}. \end{cases}
\end{gather*}
To show that $\pi_n$ is well defined, set
$\xi_r := \sum_{i=0}^r e_i(\XX_n)h_{r-i}(\YY_n)$ and observe that the 
generating function $\Xi(t)$ for the $\xi_r$ satisfies
\[
\Xi(t):=\sum_{r=0}^\infty \xi_rt^r = \left(\sum_{i=0}^\infty e_i(\XX_n)t^i\right)
 \left(\sum_{j=0}^\infty h_j(\YY_n)t^j\right) = \prod_{i=1}^n
\frac{1+\x_it}{1-\y_it}.
\]
Since the difference $\prod_i(1-\x^2_it^2)-
\prod_i(1-\y^2_it^2)$ lies in the ideal $\J_n[t^2]$, we have
\begin{equation}
\label{pfeq}
\Xi(t)\Xi(-t) =
\prod_{i=1}^n\frac{1-\x^2_it^2}{1-\y^2_it^2} \in 1+\J_n[[t^2]].
\end{equation}
Extracting the coefficient of $t^{2r}$ from both sides of equation
(\ref{pfeq}) gives
\begin{equation}
\label{pfeq2}
\xi_r^2 + 2\sum_{i=1}^r(-1)^i \xi_{r+i}\xi_{r-i} \in \J_n,  
\ \ \text{for all} \ \,  r > 0.
\end{equation}
The relations (\ref{pfeq2}), which agree with the $k=0$ case of
(\ref{crels}), generate the ideal of relations among the $q_r(X)$.
Therefore, $\pi_n$ is a well defined ring homomorphism.

\begin{thm}[Geometrization, \cite{IMN}]
\label{gm}
For any $w\in W_n$, the geometrization map $\pi_n$ sends
$\CS_w(X\,;Y,Z)$ to a polynomial which represents the universal
Schubert class $[\X_w]\in \HH^*(BB\times_{BG} BB,\Z)$ 
in the presentation {\em (\ref{presentationC})}. 
\end{thm}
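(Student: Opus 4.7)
The proof uses the uniqueness characterization of Theorem \ref{uniq}. The plan is to show that the geometric Schubert classes $[\X_w]$ satisfy divided difference recursions compatible under $\pi_n$ with those satisfied by $\CS_w$, and then to invoke uniqueness to conclude.

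First, I would define geometric divided difference operators $\delta_i^y,\delta_i^z$ on $\HH^*(BB\times_{BG}BB,\Z)$ via push-pull along the minimal parabolic flag bundles $BB\times_{BG}BP_i\to BB\times_{BG}BB$, for the first and second $B$-factors respectively. Standard BGG--Demazure theory gives
\[
\delta_i^y[\X_w] = \begin{cases}[\X_{ws_i}] & \text{if }\ell(ws_i)<\ell(w),\\ 0 & \text{otherwise,}\end{cases}
\]
and analogously $\delta_i^z[\X_w]=[\X_{s_iw}]$ (resp.\ $0$). Under the presentation (\ref{presentationC}), $\delta_i^y$ acts on $\Z[\XX_n,\YY_n]/\J_n$ as the usual polynomial divided difference on the $\XX_n$-variables (for $i\geq 1$ swapping $\x_i,\x_{i+1}$; for $i=0$ negating $\x_1$), and $\delta_i^z$ analogously on $\YY_n$.

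The next and most delicate step is to verify that $\pi_n$ intertwines $\partial_i^y,\partial_i^z$ with $\delta_i^y,\delta_i^z$ (up to harmless signs from $y_j\mapsto-\x_j$). For the generators $y_j,z_k$ of $\Gamma[Y,Z]$ this is immediate. The essential case is the action of $s_0$ on $q_r(X)$. Replacing $\x_1$ by $-\x_1$ in the generating function $\Xi(t)=\prod_{i=1}^n(1+\x_it)/(1-\y_it)$ multiplies it by $(1-\x_1t)/(1+\x_1t) = 1+2\sum_{j\geq 1}(-\x_1)^j t^j$, yielding
\[
\xi_r \ \longmapsto\ \xi_r + 2\sum_{j=1}^r (-\x_1)^j\, \xi_{r-j},
\]
which under $y_1\mapsto -\x_1$ is precisely $\pi_n(s_0 q_r(X))$ by equation (\ref{s_zero}). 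Combined with the matching of Jacobians $\pi_n(-2y_1)=2\x_1$, this gives $\pi_n\circ\partial_0^y=\delta_0^y\circ\pi_n$; the computations for $i\geq 1$ and for $\partial_i^z$ are straightforward by comparison.

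These two steps show that $\{\pi_n(\CS_w)\}_{w\in W_n}$ satisfies the same divided difference recursions as $\{[\X_w]\}_{w\in W_n}$, with $\pi_n(\CS_1)=1=[\X_1]$. To conclude equality, I would lift to the stable setting: the inclusions $W_n\hookrightarrow W_{n+1}$ and the induced morphisms of Borel mixing spaces assemble the finite geometrization maps $\pi_n$ into a stable homomorphism $\pi_\infty\colon\Gamma[Y,Z]\to\varprojlim_n\HH^*(BB_n\times_{BG_n}BB_n)$ that still intertwines divided differences. The stable geometric Schubert classes $[\X_w]_\infty$ then satisfy the hypotheses of Theorem \ref{uniq} under $\pi_\infty^{-1}$; uniqueness forces $\pi_\infty(\CS_w)=[\X_w]_\infty$, and projecting to level $n$ yields $\pi_n(\CS_w)=[\X_w]$. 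The main obstacle is the compatibility verification for $\partial_0^y$: matching the non-trivial action (\ref{s_zero}) of $s_0$ on the Schur $Q$-function generators with the geometric reflection $\x_1\mapsto-\x_1$ is the distinctively type-C ingredient of the argument, and the stability passage is genuinely essential---only in the stable limit do the divided-difference conditions of Theorem \ref{uniq} characterize $\CS_w$ uniquely, since in $\Z[\XX_n,\YY_n]/\J_n$ alone there are nontrivial $(W_n\times W_n)$-invariants of positive degree pulled back from $\HH^*(BG)$.
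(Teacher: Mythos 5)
Your proposal follows the same route as the paper's proof: define geometric divided difference operators via $\bP^1$-bundle correspondences (BGG--Demazure/Fulton), show they act on the universal Schubert classes by the standard recursion, verify that $\pi_n$ intertwines them with the algebraic operators of Theorem \ref{uniq}, pass to the stable inverse limit, and conclude by the uniqueness characterization. Your explicit generating-function check that $\pi_n$ intertwines $s_0$ with $\x_1\mapsto -\x_1$ (and hence $\partial_0^y$ with $\partial_0^\x$) fills in the step the paper compresses into ``one verifies that $\pi_\infty$ respects the actions of the divided differences,'' and your closing remark about why stability is essential matches the paper's own emphasis.
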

\begin{proof}
The simple reflections $s_i$ in $W_\infty$ act on $\Z[\XX,\YY]$
as follows. The reflection $s_i$ interchanges $\x_i$ and $\x_{i+1}$
for $i>0$, while $s_0$ replaces $\x_1$ by $-\x_1$; all other variables
remain fixed. We have corresponding divided difference operators
$\partial^\x_i: \Z[\XX,\YY]\to \Z[\XX,\YY]$. For each $i\geq 0$ and 
$f\in \Z[\XX,\YY]$, they are defined by
\[
\partial_0^\x f := \frac{f-s_0f}{2\x_1}, \qquad
\partial_i^\x f := \frac{f-s_if}{\x_{i+1}-\x_i} \ \ \ \text{for $i>0$}.
\]
Let $\tilde{\omega}$ denote the involution of $\Z[\XX,\YY]$ obtained
by interchanging the variable $\x_j$ with $\y_j$ for all $j\geq
1$. Geometrically, this corresponds to the automorphism of
$BB\times_{BG} BB$ given by switching the two factors. Define the
$\y$-divided difference operators $\partial^\y_i$ on $\Z[\XX,\YY]$ by
$\partial^\y_i=\tilde{\omega}\partial^\x_i\tilde{\omega}$ for each
$i\geq 0$.

Restricting the above to $W_n$ and $0\leq i\leq n-1$, we obtain divided
differences $\partial^\x_i$, $\partial^\y_i$ acting on
$\Z[\XX_n,\YY_n]/\J_n$ and hence on $\HH^*(BB\times_{BG}BB,\Z)$.  In
the mid 1990s, building on the work of Bernstein-Gelfand-Gelfand
\cite{BGG} and Demazure \cite{D1, D2}, Fulton \cite{Fu2, Fu3} studied
the action of the operators $\partial^\x_i$ on the universal Schubert classes
$[\X_w]$, constructing them geometrically using correspondences by
$\bP^1$-bundles. It follows from this work that we have
\begin{equation}
\label{ddeqs2}
\partial_i^\x[\X_w] = \begin{cases}
[\X_{ws_i}] & \text{if $\ell(ws_i)<\ell(w)$}, \\ 
0 & \text{otherwise},
\end{cases}
\quad
\partial_i^\y[\X_w] = \begin{cases}
[\X_{s_iw}] & \text{if $\ell(s_iw)<\ell(w)$}, \\ 
0 & \text{otherwise},
\end{cases}
\end{equation}
for all $i$.

Write $BB_n$ and $BG_n$ for $BB$ and $BG$, respectively, to emphasize
the dependence on the rank $n$, and denote by $BM_n$ the Borel mixing
space $BB_n\times_{BG_n} BB_n$. The natural embedding of $W_n$ into
$W_{n+1}$ induces a morphism $\phi_n:BM_n \to BM_{n+1}$ and hence a
map of cohomology rings
\begin{equation}
\label{BBsys}
\phi_n^*:\HH^*(BM_{n+1},\Z)\to\HH^*(BM_n,\Z)
\end{equation}
which in terms of the presentation (\ref{presentationC}) is given by 
sending $\x_{n+1}$ and $\y_{n+1}$ to zero. Let 
\[
\IH(BM_\infty):=
\lim_{\longleftarrow}\HH^*(BM_n,\Z)
\]
be the stable cohomology ring of $BM_n$, which is the inverse limit in
the category of graded rings of the system of maps
$\{\phi_n^*\}_{n\geq 1}$ in (\ref{BBsys}).  For each $w\in W_\infty$,
we have a stable Schubert class $C_w$ in $\IH(BM_\infty)$, defined as
the element $\dis\lim_{\longleftarrow}[\X_w]$. It follows from the
equations (\ref{ddeqs2}) that
\begin{equation}
\label{ddeqs3}
\partial_i^\x C_w = \begin{cases}
C_{ws_i} & \text{if $\ell(ws_i)<\ell(w)$}, \\ 
0 & \text{otherwise},
\end{cases}
\quad
\partial_i^\y C_w = \begin{cases}
C_{s_iw} & \text{if $\ell(s_iw)<\ell(w)$}, \\ 
0 & \text{otherwise},
\end{cases}
\end{equation}
for all $i\geq 0$, while the degree zero component of $C_w$ 
is $1$ if $w=1$, and $0$ otherwise. Moreover, using the presentation
(\ref{presentationC}) and arguing as Theorem \ref{uniq}, it is easy 
to check that the family $\{C_w\}$ for $w\in W_\infty$ is uniquely
determined by these conditions.

The composite homomorphism
\[
\Gamma[Y,Z] \stackrel{\pi_n}\longrightarrow \Z[\XX_n,\YY_n]/\J_n
\stackrel{\simeq}\longrightarrow \HH^*(BM_n,\Z)
\]
is compatible with the maps $\phi^*_n$, therefore there is an induced 
ring homomorphism
\[
\pi_\infty:\Gamma[Y,Z]\to \IH(BM_\infty).
\]
One verifies that $\pi_\infty$ respects the actions of the divided
differences on its domain and codomain (compare with \cite{BH}). We
deduce the theorem, since both $\CS_w$ and $C_w$ are characterized by
the equations (\ref{ddeqs}) and (\ref{ddeqs3}), respectively, and the
same degree zero condition. One can show that, in fact, the map
$\pi_\infty$ is a canonical isomorphism of graded rings.
\end{proof}

For any $w\in W_n$, the image of $\CS_w$ under the geometrization map
$\pi_n$ may be computed as follows. Use (\ref{dbleC2}) and (\ref{FtoQ})
to write
\begin{equation}
\label{dbleCeq}
\CS_w(X\,;Y,Z) = 
\sum_{u,v,u',\la} e^v_\la \, \AS_{u^{-1}}(-Z)Q_\la(X) \AS_{u'}(Y),
\end{equation}
where the sum is over all reduced factorizations $uvu'=w$ and 
strict partitions $\la$ with $u,u'\in S_n$ and $|\la|=\ell(v)$. We 
then have 
\begin{equation}
\label{dbleC4}
\pi_n(\CS_w(X\,;Y,Z)) = \sum_{u,v,u',\la} e^v_\la \,
\AS_{u^{-1}}(-\YY_n)\wt{Q}_\la(\XX_n/\YY_n) \AS_{u'}(-\XX_n).
\end{equation}
We call the polynomial $\wt{Q}_\la(\XX_n/\YY_n)$ a {\em supersymmetric
  $\wt{Q}$-polynomial}; it is obtained from the $Q$-polynomial
$Q_\la(c)$ in (\ref{Qdef}) by specializing $c_r$ to $\sum_i
e_i(\XX_n)h_{r-i}(\YY_n)$ for each $r$. The supersymmetric
$\wt{Q}$-polynomials have properties directly analogous to those of
the $\wt{Q}$-polynomials $\wt{Q}_\la(\XX_n)$ of Pragacz and Ratajski
\cite{PR2}, which satisfy the identity 
\[
\wt{Q}_\la(\XX_n) = \wt{Q}_\la(\XX_n/\YY_n)\vert_{\YY_n=0}. 
\]
Moreover, setting $Z=0$ and
$\YY_n=0$ in (\ref{dbleC4}), one recovers the {\em symplectic Schubert
 polynomials} $\CS_w(\XX_n)$ of \cite{T2}.

\begin{remark}
\label{rmksix}
Let $\fraka$ be a sequence of integers as in \S \ref{sdl} and let
$P\subset G$ be the parabolic subgroup corresponding to
$\fraka$. Suppose further that we are concerned only with degeneracy
loci having the symmetries of $P$, or equivalently, with Giambelli
formulas in $\HH^*_T(G/P)$.  Let $k=a_1$ and $\Gamma^{(k)} :=
\Z[\ti_1,\ti_2,\ldots]$ denote the associated ring of theta
polynomials. Then it suffices to work with polynomials in the
ring $$\Gamma^{(k)}[y_{k+1},y_{k+2},\ldots,z_1,z_2,\ldots],$$ as these
elements map under $\pi_n$ to the cohomology classes that lie in
$\HH^*_T(G/P)$. This follows from equation (\ref{dbleC2}) and
\cite[Prop.\ 5]{T6}.
\end{remark}

\medskip
\noin {\bf Type D.} Here $G=\SO_{2n}$ and according to (\ref{natiso})
the ring $\HH^*(BB\times_{BG} BB,\Q)$ is presented as a quotient
\begin{equation}
\label{presentationD}
\HH^*(BB\times_{BG} BB,\Q) \cong \Q[\XX_n,\YY_n]/\J'_n,
\end{equation}
where $\J'_n$ is the ideal generated by the differences
$e_i(\x_1^2,\ldots,\x_n^2)- e_i(\y_1^2,\ldots,\y_n^2)$ for $1\leq i
\leq n-1$ and $\x_1\cdots\x_n- \y_1\cdots\y_n$. 
The inverse of the isomorphism (\ref{presentationD}) sends the
class of $\x_i$ to $-c_1(\V_{n+1-i}/\V_{n-i})$ and of $\y_i$ to
$-c_1(F_{n+1-i}/F_{n-i})$ for each $i$ with $1\leq i \leq n$.
The {\em geometrization map} is the ring homomorphism 
\[
\pi'_n : \Gamma'[Y,Z] \to \Q[\XX_n,\YY_n]/\J'_n
\] 
defined by setting
\begin{gather*}
\pi'_n(P_r(X)):=\frac{1}{2}\sum_{i=0}^r e_i(\XX_n)h_{r-i}(\YY_n)
\ \ \text{for all} \, r, \\ \pi'_n(y_i):=\begin{cases} -\x_i & \text{if
  $1\leq i\leq n$}, \\ \ \ 0 & \text{if $i>n$}, \end{cases}
\ \ \ \text{and} \ \ \ \pi'_n(z_j):= \begin{cases} \y_j & \text{if
    $1\leq j\leq n$}, \\ 0 & \text{if $j>n$}. \end{cases}
\end{gather*}
One can now prove a type D version of Theorem \ref{gm}: for any $w\in
\wt{W}_n$, the map $\pi'_n$ sends $\DS_w(X\,;Y,Z)$ to a polynomial
which represents the universal Schubert class $[\X_w]$ in the
presentation (\ref{presentationD}). Moreover, the system of maps
$\pi'_n$ induces a graded ring isomorphism $\pi'_\infty$ between
$\Gamma'[Y,Z]$ and the stable cohomology ring of the Borel mixing
space associated to $\SO_{2n}(\C)$. In the single (non-equivariant)
case when $Z=0$ and $\YY_n=0$, we observe that $\pi'_n(\DS_w(X\,;Y))$
is equal to the {\em orthogonal Schubert polynomial} $\DS_w(\XX_n)$ of
\cite{T3}.

\subsection{Proof of Theorems \ref{dbleAloci}--\ref{dbleDloci}}

We will give our proof of Theorem \ref{dbleDloci} below; the arguments
in types A--C are simpler, and the reader may also consult the
references \cite[\S 4.1]{BKTY1} (for Theorem \ref{dbleAloci}) and
\cite[Thm.\ 3]{T6} (for Theorems \ref{dbleCloci} and \ref{dbleBloci}).
All of the proofs rely on the {\em splitting principle}, which allows
one to express the Chern classes of the relevant vector bundles as
elementary symmetric polynomials in their Chern roots; for more
details on this, see for example \cite[\S 3.2]{Fu5} and \cite[\S
  3.5]{Ma}.

The variables $\x_i$ and $\y_i$ for $1\leq i \leq n$ give the Chern
roots of the various vector bundles over $BB\times_{BG}BB$, which pull
back to give the roots of the corresponding vector bundles over
$M$. In particular the Chern roots of $Q_1$ are (the pullbacks of)
$\x_1,\ldots,\x_n,-\x_1,\ldots,-\x_{a_1}$, while those of $Q_r$ for
$r\geq 2$ are $-\x_{a_{r-1}+1},\ldots,-\x_{a_r}$. Similarly the Chern
roots of $F_{n+1-r}$ are represented by $-\y_r,\ldots,-\y_n$ for each
$r$. With $k=a_1$, we have
$$\ti_r(X\,;Y_1) = \sum_{i=0}^r q_{r-i}(X)e_i(y_1,\ldots,y_{a_1})$$
for each $r\geq 0$. It follows that
\begin{align*}
\pi'_n(\ti_r(X\,;Y_1)) &= \sum_{i,j\geq 0} e_{r-i-j}(\XX_n) h_j(\YY_n)
e_i(-\x_1,\ldots,-\x_{a_1}) \\ 
& = \sum_{j\geq 0} e_{r-j}(\x_1,\ldots,\x_n, -\x_1,\ldots,-\x_{a_1}) h_j(\YY_n) \\
& = c_r(Q_1-F_n) = c_r(E-E_1-F_n).
\end{align*}
Moreover, we have $$\pi'_n(e_{a_1}(Y_1)) =
(-1)^{a_1}\x_1\cdots\x_{a_1} = c_{a_1}(E_0-E_1),$$ where $E_0$ denotes
the maximal isotropic subbundle of $E$. We deduce from the equations
(\ref{etaequ}) that
\begin{align*}
\pi'_n(\eta_{a_1}(X\,;Y_1)) &= \frac{1}{2}(c_{a_1}(E-E_1-F_n)+c_{a_1}(E_0-E_1))\quad
\text{and} \\
\pi'_n(\eta'_{a_1}(X\,;Y_1)) &= \frac{1}{2}(c_{a_1}(E-E_1-F_n)-c_{a_1}(E_0-E_1)).
\end{align*}
Furthermore, for any partition $\mu$ and $r\geq 2$, we have
\[
\pi'_n(s_\mu(Y_r)) = s_\mu(-\x_{a_{r-1}+1},\ldots,-\x_{a_r}) = 
s_\mu(Q_r) = s_\mu(E_{r-1}-E_r),
\] 
while
\[
\pi'_n(s_\mu(0/Z_r)) = s_{\wt{\mu}}(-\y_{b_{r-1}+1},\ldots,-\y_{b_r}) =
s_{\wt{\mu}}(\wh{Q}_r) = s_\mu(F_{n+b_{r-1}}-F_{n+b_r}).
\]
The equality in Theorem \ref{dbleDloci} is therefore obtained by applying
$\pi'_n$ to formula (\ref{dbDSsplitting}).

\section{Suggestions for future research.}
\label{fut}

In this section we propose some natural directions to follow in future 
work.

\subsection{Quantum cohomology of $G/P$} 

The last two decades have seen much exploration of the Gromov-Witten
theory and quantum cohomology rings of homogeneous spaces. In
particular, one seeks to extend the classical understanding of
Schubert calculus to the quantum setting, with analogues of the
theorems of Pieri, Giambelli, and computations of Schubert structure
constants for $\QH^*(G/P)$.  One of the motivations for \cite{T6} was
the fact that the known classical Giambelli formulas expressed in
terms of the special Schubert classes in this article have
straightforward extensions to the small quantum cohomology ring of
$G/P$; see \cite{Bertram, FGP, CF, KTlg, KTog, BKT3, BKT4}. We expect
that there should be quantum (double) Schubert polynomials and
(equivariant) Giambelli formulas in the Lie types B, C, and D which
restrict to the results found in the present paper when the quantum
parameters are set equal to zero.

\subsection{$K$-theory of $G/P$}

The classes of the structure sheaves $\cO_{X_w}$ of the Schubert
varieties $X_w$, $w\in W^P$ provide a natural $\Z$-basis for the
Grothendieck group $K(G/P)$ of vector bundles on $G/P$. When $G$ is
the general linear group $\GL_n$, one has the Lascoux-Sch\"utzenberger
theory of Grothendieck polynomials \cite{LS2, FLa}, and many of the
type A results for cohomology can be generalized to $K$-theory
\cite{La2, Bu, BKTY2}. Even for the Grassmannian $\G(m,n)$, however,
one does not yet have a Giambelli formula for $[\cO_{X_\la}]$ in terms
of special Schubert classes that clearly extends the raising operator
expression (\ref{giambelli2}) in \S \ref{class} (see
\cite[Thm.\ 1]{Bu} for a Jacobi-Trudi recursion). There should be
versions of the double Grothendieck polynomials for the other
classical Lie types, defined using the degenerate Hecke algebra in
place of the nilCoxeter algebra; the type A theory is worked out in
\cite{FK0}. One could then seek analogues of Lascoux's transition
equations \cite{La2} and the splitting formula of
\cite[Thm.\ 4]{BKTY2} for these objects. Some recent related work on
equivariant $K$-theory in the other types may be found in \cite{IN,
  GK}; see also \cite{KK, GR}.

\subsection{Combinatorial questions}

There is a large body of research on the combinatorial aspects of the
Schubert calculus, most all of it in the situation where the
underlying Weyl group elements are {\em fully commutative} in the
sense of \cite{St2}. By contrast, the combinatorial theory developed
in \cite{BKT1, BKT2, BKT4, T4, T6, T7} is in its infancy, with many
questions worth exploring further. In particular, we would like a
deeper understanding of the connections between the formulas that
appear in special cases, such as in the works cited in the
introduction, which differ from those given here (see \cite{TW} 
for recent progress on this).

We mentioned in \S \ref{tts} that the type A Stanley coefficients
$c^\om_\la$ enumerate Young tableaux as well as leaves in transition
trees. However the mixed Stanley coefficients $e^w_{\la}$ and
$d^w_{\la}$ are only known to be positive through transition when
$k>0$. Are there alternative combinatorial formulas for them, and is
there a geometric proof of their positivity? In type A, Little
\cite{Li} has studied the combinatorics of the
Lascoux-Sch\"utzenberger tree; is there an analogue of his bijection
for the $c^\om_\la$ which involves the $0$-transition trees of
\cite{Bi1} and Kra\'skiewicz-Lam tableaux \cite{Kr,
  L1}?\,\footnote{Such a bijection in Lie type B has recently been
  defined in \cite{BHRY}.}  Do the mixed Stanley coefficients include
a rule which computes the Schubert structure constants on isotropic
Grassmannians? Some partial results on this last question are obtained
in \cite[\S 2.4]{T6}.

\subsection{A theory for the exceptional types}

It natural to ask whether the uniform choice of special Schubert
classes on the Grassmannians for the classical groups from \cite{BKT1}
and \S \ref{class}-\S \ref{isogiam} extends to the exceptional Lie
types, and to look for canonical Giambelli expressions native to $G/P$
for arbitrary reductive groups $G$ and $P$. Anderson \cite{A} has
obtained degeneracy locus formulas for vector bundles with structure
group $\text{G}_2$, and there are parallels between the combinatorics
of Schubert calculus on the Grassmannians $\G(m,n)$ and the hermitian
symmetric (or cominuscule) quotients of $\text{E}_6$ and $\text{E}_7$
\cite{TY}. More remains to be understood however to obtain a
generalization of the results discussed in this article to any $G/P$
space.

\subsection{Connections with representation theory}

Do the raising operator formulas of this paper appear in other areas,
and in particular in representation theory? The Schur $S$- and
$Q$-polynomials were studied by Schur (and his advisor Frobenius) in
order to compute the characters and projective characters of the
symmetric group and the polynomial characters of the general linear
group. The discovery of formulas such as (\ref{JT}) and
(\ref{giamQ0}), (\ref{giamQ}) in the algebra of symmetric functions
and associated group representations long preceded their realizations
in the Schubert calculus. A representation-theoretic understanding of
the raising operator expressions (\ref{Tidefn}) and (\ref{Heq}),
parallel to the extensive theory of Schur polynomials, would certainly
be desirable.  Young's raising operators are well known in algebraic
combinatorics, but we suspect that their full potential has not yet
been exploited.  For a non-exhaustive list of references which apply
them in various settings, see \cite{BKT2, BKT3, BKT4, DLT, Ga, HH,
  LaS, Le, Lit, M, Mo, Ro, T4, T5, T7, TW, To, Y}.

\end{document}